 \newtheorem{theorem}{Theorem}[section]
 \newtheorem*{theorem*}{Theorem}
 \newtheorem*{lemma*}{Lemma}
 \newtheorem{proposition}[theorem]{Proposition}
 \newtheorem{fact*}{Fact}
 \newtheorem{lemma}[theorem]{Lemma}
 \newtheorem{corollary}[theorem]{Corollary}
\theoremstyle{definition}
 \newtheorem*{remark*}{Remark}
 \newtheorem{example}[theorem]{Example}
\numberwithin{equation}{section}
\renewcommand{\theenumi}{{\rm(\arabic{enumi})}}
\newcommand{\vect}[1]{\boldsymbol{#1}}
\newcommand{\R}{\boldsymbol{R}}
\newcommand{\A}{\mathcal{A}}
\newcommand{\rank}{\operatorname{rank}}
\newcommand{\ord}{\operatorname{ord}}
\renewcommand{\phi}{\varphi}
\newcommand{\sgn}{\operatorname{sgn}}
\newcommand{\inner}[2]{\left\langle{#1},{#2}\right\rangle}
\newcommand{\e}{\vect{e}}
\newcommand{\n}{\vect{n}}
\newcommand{\pmt}[1]{{\begin{pmatrix} #1  \end{pmatrix}}}
\newcommand{\vmt}[1]{{\begin{vmatrix} #1  \end{vmatrix}}}
\newcommand{\M}{\mathcal{M}}
\newcommand{\what}{\widehat}
\title[Boundedness of geometric invariants near a singularity]
{Boundedness of geometric invariants near a singularity
which is a suspension of a singular curve}
\dedicatory{Dedicated to Professors Masaaki Umehara and Kotaro Yamada
on the occasions of their sixtieth birthdays}
\author[L. F. Martins]{Luciana F. Martins} 
\author[K. Saji]{Kentaro Saji} 
\author[S. P. dos Santos]{Samuel P. dos Santos} 
\author[K. Teramoto]{Keisuke Teramoto}
\address[L. F. Martins \& S. P. dos Santos]
{
Universidade Estadual Paulista (UNESP), 
Instituto de Bio{\-}ci\^{e}n{\-}cias Letras e Ci\^{e}ncias Exatas,
R.~Crist{\'o}v{\~a}o Colombo, 2265, Jd Nazareth, 
15054-000 S{\~a}o Jos{\'e} do Rio Preto, S\~{a}o Paulo, Brazil}
\email{luciana.martins@unesp.br}
\email{samuel.paulino@unesp.br}
\address[K. Saji]{Department of Mathematics, 
Graduate School of Science, 
Kobe University, 
Rokkodai 1-1, Nada, Kobe, 657-8501, Japan}
\email{saji@math.kobe-u.ac.jp}
\address[K. Teramoto]{
Graduate School of Sciences and Technology for Innovation,
Yamaguchi University, Yamaguchi, 753-8512, Japan}
\email{kteramoto@yamaguchi-u.ac.jp}
\date{\today}
\keywords{cuspidal edge; 
geodesic curvature; normal curvature; geodesic torsion}
\thanks{Partly supported by the
Japan Society for the Promotion of Science KAKENHI 
Grants numbered 18K03301, 19K14533, 22K03312, 22K13914,
the Japan-Brazil bilateral project JPJSBP1 20190103
and
the S\~ao Paulo Research Foundation grant 2018/17712-7.}
\subjclass[2020]{Primary 
57R45; 
Secondary 58K05 
}
\begin{document}
\begin{abstract}
Near a singular point of a surface or a curve, geometric invariants diverge in general, and the orders of diverge, in particular the boundedness about these invariants represent geometry of the surface and the curve.
In this paper, we study boundedness and orders of several geometric invariants near a singular point of a surface which is a suspension of a singular curve in the plane
and those of curves passing through the singular point.
We evaluates the orders of Gaussian and mean curvatures
and them of geodesic, normal curvatures and geodesic torsion for the curve.
\end{abstract}

\maketitle

\section{Introduction}
\label{sec:intro}
In this paper, we study boundedness of 
several geometric invariants near a 
singular point of a surface
which is a suspension of a singular curve in the plane.
More precisely, let $\sigma$ be an $\A$-equivalence class
of singular plane curve-germs.
A {\it $\sigma$-edge\/} is a map-germ
$f:(\R^2,0)\to(\R^3,0)$ such that it is $\A$-equivalent
to $(u,v)\mapsto(u,c_1(v),c_2(v))$,
where $c=(c_1,c_2)$ is a representative of $\sigma$,
namely, a one-dimensional {\it suspension\/} of $\sigma$.
Here, two map-germs
$h_1,h_2:(\R^m,0)\to(\R^n,0)$ are {\it $\A$-equivalent\/}
if there exist diffeomorphisms
$\Phi_s:(\R^m,0)\to(\R^m,0)$ and $\Phi_t:(\R^n,0)\to(\R^n,0)$
such that $h_2=\Phi_t\circ h_1\circ \Phi_s^{-1}$.
A cuspidal edge 
($\mathcal{A}$-equivalent to the germ $(u,v)\mapsto(u,v^2,v^3)$ 
at the origin)
and a $5/2$-cuspidal edge 
($\mathcal{A}$-equivalent to the germ $(u,v)\mapsto(u,v^2,v^5)$)
are examples of $\sigma$-edges,
and $\sigma$ are a $3/2$-cusp and $5/2$-cusp respectively.
If $\sigma$ is of finite multiplicity, then the $\sigma$-edge is a frontal.
A frontal is a class of surfaces with singular points,
and it is well known that surfaces with 
constant curvature are frequently in this class.
In these decades,
there are several studies of frontals 
from the viewpoint of differential geometry 
and various geometric invariants at singular points 
are introduced (for instance 
\cite{maxface,framed,intrinsic-frontal,honda-saji,martins-saji,msuy,front}). 
If a surface is invariant under a group action on $\R^3$, then
$\sigma$-edges will appear naturally. Singularities appearing on 
surfaces of revolution and 
a helicoidal surface are
examples of such surfaces \cite{msst,tt}.
Moreover, such singularities appear
on the dual surface at cone like singular points 
of a constant mean curvature one surface
in the de Sitter $3$-space (see \cite{honda-sato}).

In this paper, we study geometry of $\sigma$-edges. 
For this, we consider two classes of singular map-germs, 
that we shall call \textit{$m$-type} and \textit{$(m,n)$-type edges}, 
the first  including $(m,n)$-type edges and also $\sigma$-edges when $\sigma$ has finite multiplicity 
(see Section \ref{sec:sigmaedge}). 
One observes that $m$-type edges are frontals. 
In order to proceed our study we find a normal form 
for each one of these map-germs preserving the geometry of the initial map, 
since we only use isometries in the target (Proposition \ref{prop:edgenormal}). 
In \cite{martins-saji,front} the authors define singular, normal and cuspidal curvatures, 
as well as cuspidal torsion for frontals. 
In an analogous way, we define similar geometric invariants  for $m$-type edges, 
getting same names, except for the cuspidal curvature, which we called $(m,m+i)$-cuspidal curvature. 
These  invariants are related with the coefficients of the normal form given in Proposition \ref{prop:edgenormal}. 
It is worth mention that these cuspidal curvatures are similar. 
In fact, we know that a frontal-germ is a front if and only if the cuspidal curvature is not zero. 
We conclude from  Proposition \ref{prop:type:front} that a $m$-type edge  is a front 
if and only if the $(m,m+1)$-cuspidal curvature is non zero at 0.
In particular, we study orders of geometric invariants
and geometric invariants of curves passing through the singular point.
We evaluates the orders of 
Gaussian and mean curvatures (Theorem \ref{thm:ordkh})
and the minimum orders of geodesic, normal curvatures and geodesic torsion
for a singular curve 
passing through the singular point (Theorem \ref{thm:ord1}).
These minimum orders are written in terms of singular, cuspidal 
and normal curvatures and the cuspidal torsion.
As a corollary, we give the boundedness of these curvatures under certain
generic conditions (Corollary \ref{cor:bdd}). 

\section{Geometry of $\sigma$-edges}\label{sec:sigmaedge}
We give several classes similar to $\sigma$-edges.
It includes $\sigma$-edges, and these classes will be
useful to treat.
We recall that a map-germ $f:(\R^2,0)\to(\R^3,0)$ is a {\it frontal\/} if
there exists a unit vector field $\nu$ along $f$
such that $\inner{df_p(X_p)}{\nu(p)}=0$ holds at any $p\in (\R^2,0)$
and any $X_p\in T_p\R^2$, where $\inner{\cdot}{\cdot}$ is the canonical inner product of $\R^3$.
The vector field $\nu$ is called a 
{\it unit normal vector field\/} of $f$. 
A map-germ $f:(\R^2,0)\to(\R^3,0)$ is an $m$-{\it type edge\/} if 
it is $\A$-equivalent
to $(u,v^m,v^{m+1}a(u,v))$ for a function $a(u,v)$.
A map-germ $f:(\R^2,0)\to(\R^3,0)$ is a $(m,n)$-{\it type edge\/}
($m<n$)
if it is $\A$-equivalent to 
$(u,v^m,v^n h(u,v))$, where $h(0,0)=1$.
This is equivalent to being $\A^n$-equivalent to 
$(u,v^m,v^n)$.
Two map-germs are $\A^n$-equivalent if their $n$-jets at the
origin are $\A$-equivalent.
We show:
\begin{lemma}\label{lem:edgem}
Let $f$ be a $\sigma$-edge $($respectively, an $m$-type edge, 
an $(m,n)$-type edge$)$.
Then an intersection curve of
$f$ with a surface $T$ which is transversal to $f(S(f))$ 
passing through $p\in S(f)$ near $0$
is 
$\A$-equivalent to $\sigma$
$($respectively, 
$\A^m$-equivalent to $(t^m,0)$,
$\A^n$-equivalent to $(t^m,t^n))$.
\end{lemma}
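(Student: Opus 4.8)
The plan is to reduce to the three normal forms and then intersect with a graph. Since the conclusion is stated only up to $\A$- (respectively $\A^m$-, $\A^n$-) equivalence, and transversality is preserved by diffeomorphisms, I would first observe that replacing $f$ by an $\A$-equivalent germ alters the intersection curve only by an $\A$-equivalence: if $f_2=\Phi_t\circ f_1\circ\Phi_s^{-1}$, then $S(f_2)=\Phi_s(S(f_1))$ and $f_2(S(f_2))=\Phi_t(f_1(S(f_1)))$, a surface $T$ transversal to $f_2(S(f_2))$ at $p$ pulls back to $\Phi_t^{-1}(T)$ transversal to $f_1(S(f_1))$ at $\Phi_t^{-1}(p)$, and the two intersection curves correspond under $\Phi_t$ and $\Phi_s$. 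Hence I may assume $f$ is in normal form: $f(u,v)=(u,c_1(v),c_2(v))$ for a $\sigma$-edge, $f(u,v)=(u,v^m,v^{m+1}a(u,v))$ for an $m$-type edge, and $f(u,v)=(u,v^m,v^nh(u,v))$ with $h(0,0)=1$ for an $(m,n)$-type edge.

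Next I would locate the singular set and its image. In every normal form $f_u=(1,0,*)$ has nonzero first component, so $df$ drops rank exactly where $f_v=\zv$; since the second component of $f_v$ is $mv^{m-1}$ in the monomial cases and $c_1'(v)$ in the $\sigma$-edge case, this locus reduces to $\{v=0\}$, using that the plane curve $c$ is singular only at the origin. Thus $S(f)=\{v=0\}$ and $f(S(f))=\{(u,0,0)\}$ is a regular curve, the $x$-axis. A surface $T$ transversal to this line at $p=(u_0,0,0)$ satisfies $\partial_x\notin T_pT$, so near $p$ it is a graph $x=\psi(y,z)$ over the $(y,z)$-plane with $\psi(0,0)=u_0$, and the projection of $T$ to the $(y,z)$-plane is a diffeomorphism.

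The intersection curve is then parametrized by the single regular branch of $f^{-1}(T)$, which is the zero set of $G(u,v)=u-\psi(y(u,v),z(u,v))$, where $(y,z)$ denote the second and third components of $f$. Since $G_u(u_0,0)=1\neq0$, the implicit function theorem yields $u=U(v)$ with $U(0)=u_0$ (for the $\sigma$-edge this is explicit, $U(v)=\psi(c_1(v),c_2(v))$). Composing the graph projection with $v\mapsto f(U(v),v)$ I would read off the curve directly: it is $v\mapsto(c_1(v),c_2(v))$ for the $\sigma$-edge, which is a representative of $\sigma$; it is $v\mapsto(v^m,v^{m+1}a(U(v),v))$ for the $m$-type edge, whose $m$-jet is $(v^m,0)$; and it is $v\mapsto(v^m,v^nh(U(v),v))$ for the $(m,n)$-type edge, where $h(U(0),0)=h(u_0,0)\neq0$ by continuity of $h$ near the origin, so rescaling the second coordinate makes its $n$-jet equal to $(v^m,v^n)$. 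These give the three asserted equivalences.

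The main obstacle I anticipate is bookkeeping rather than depth: checking that the transversality hypothesis is precisely what forces $G_u\neq0$, so that $f^{-1}(T)$ is a single smooth branch meeting $S(f)$ transversally rather than a more degenerate set, and verifying that the reparametrization $u=U(v)$ does not disturb the relevant jet orders. The only point needing genuine care is that $h(u_0,0)\neq0$ persists for $p$ near $0$, which follows from $h(0,0)=1$ together with continuity.
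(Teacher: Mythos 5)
Your proof is correct and follows essentially the same route as the paper's: reduce to the normal form, represent $T$ as a graph $x=\psi(y,z)$ whose projection to the $yz$-plane is a diffeomorphism by transversality, and read off the intersection curve there. The only difference is that you spell out the implicit-function-theorem step for the monomial cases and the jet computations that the paper compresses into ``one can show the other claims by the similar way.''
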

\begin{proof}
Since the assumption and the assertion do not depend on the
choice of the coordinate systems, 
we can assume $f$ is given by $(u,c_1(v),c_2(v))$, 
where $c=(c_1,c_2)$ is $\A$-equivalent to $\sigma$.
Then $T$ can be represented by the graph $\{(x,y,z)\,|\,x=h(y,z)\}$
in $(\R^3,0)$ as the $xyz$-space, and
the intersection curve is $(h(c_1(v),c_2(v)),c_1(v),c_2(v))$.
Since $T$ is transverse to the $x$-axis, 
the orthogonal projection of $T$ to the $yz$-plane is a diffeomorphism,
we see the assertion.
One can show the other claims by the similar way.
\end{proof}

\subsection{A sufficient condition}
We give a sufficient condition of a frontal-germ being an $m$ or
$(m,n)$-type edge
under the assumption $n<2m$.
We assume $n < 2m$ throughout this subsection.
Let $f:(\R^2,0)\to(\R^3,0)$ be a frontal-germ
satisfying $\rank df_0=1$. Then there exists a vector field
$\eta$ such that $\eta_p$ generates $\ker df_p$ if $p\in S(f)$.
We call $\eta|_{S(f)}$ a {\it null vector field\/}, and
$\eta$ an {\it extended null vector field\/}.
An extended null vector field is also called a null vector field
if it does not induce a confusion.
We assume that the set of singular points $S(f)$ is a regular curve,
and the tangent direction of $S(f)$ is not in $\ker df_0$.
Let $\xi$ be a vector field such that $\xi_p$ is a non-zero
tangent vector of $S(f)$ for $p\in S(f)$.
We consider the following conditions for $(\xi,\eta)$:
\renewcommand*{\theenumi}{{\rm [\thesection.\arabic{enumi}]}}
\begin{enumerate}
\item\label{itm:cri1}
 $\eta^{i}f=0$ $(1\leq i\leq m-1)$ on $S(f)$,
\item\label{itm:cri2}
$\rank(\xi f,\eta^{m}f)=2$ on $S(f)$,
\item\label{itm:cri3} 
$\rank(\xi f,\eta^{m}f,\eta^{i}f)=2$
$(m<i<n)$ on $S(f)$,
\item\label{itm:cri4} $\rank(\xi{f},\eta^{m}f,\eta^{n}f)=3$ at $p$.
\end{enumerate}
\renewcommand*{\theenumi}{{\rm (\arabic{enumi})}}
Here, 
for a vector field $\zeta$ and a map $f$,
the symbol
$\zeta^if$ stands for the $i$-times directional derivative
of $f$ by $\zeta$.
Moreover, for a coordinate system $(u,v)$ and a map $f$,
the symbol $f_{v^i}$
stands for $\partial^if/\partial v^i$.
\begin{proposition}\label{prop:sufcond}
Let $f:(\R^2,0)\to(\R^3,0)$ be a frontal-germ
satisfying $\rank df_0=1$.
Assume that the set of singular points $S(f)$ is a regular curve,
and the tangent direction of $S(f)$ generated by $\xi$ is not in $\ker df_0$.
If there exists a null vector field $\eta$ satisfying
\ref{itm:cri1}, and $(\xi,\eta)$ satisfies \ref{itm:cri2},
then $f$ is an $m$-type edge.
Moreover, if $(\xi,\eta)$ also satisfies \ref{itm:cri3}--\ref{itm:cri4},  
then $f$ is an $(m,n)$-type edge.
\end{proposition}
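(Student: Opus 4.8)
The plan is to reduce $f$ to a normal form by choosing coordinates adapted to the pair $(\xi,\eta)$, and then reading off the $\A$-equivalence type directly from the conditions [\ref{itm:cri1}]--[\ref{itm:cri4}]. First I would choose a coordinate system $(u,v)$ on the source so that $S(f)$ coincides with the $u$-axis $\{v=0\}$, that $\partial_u = \xi$ along $S(f)$, and that $\partial_v = \eta$ along $S(f)$; the hypothesis that the tangent direction of $S(f)$ generated by $\xi$ is not in $\ker df_0$ guarantees that $\xi$ and $\eta$ are linearly independent at $0$, so such a coordinate system exists. With this choice the directional derivatives $\eta^i f$ restricted to $S(f)$ become the partial derivatives $f_{v^i}(u,0)$, and $\xi f$ restricted to $S(f)$ becomes $f_u(u,0)$.

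Next I would Taylor-expand $f$ in the $v$-direction along $S(f)$. Since $\rank df_0=1$ and $\eta$ spans $\ker df_p$ on $S(f)$, we have $f_v(u,0)=\eta f=0$ on $S(f)$. Condition [\ref{itm:cri1}] then says $f_{v^i}(u,0)=0$ for $2\le i\le m-1$, so the first nonvanishing $v$-derivative along $S(f)$ is of order $m$, giving an expansion
\begin{equation*}
f(u,v)=f(u,0)+\frac{v^m}{m!}f_{v^m}(u,0)+O(v^{m+1}).
\end{equation*}
Condition [\ref{itm:cri2}], namely $\rank(\xi f,\eta^m f)=2$ on $S(f)$, says that $f_u(u,0)$ and $f_{v^m}(u,0)$ are linearly independent, so these two vectors span a $2$-plane moving smoothly along $S(f)$. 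Using an isometry (or more generally a target diffeomorphism) depending smoothly on $u$, I would straighten these two directions to the first two coordinate axes, obtaining a form in which the leading behaviour is $(u, v^m(\text{unit}), \ast)$. The first component can be taken to be exactly $u$ after a source change preserving $v=0$, and after rescaling $v$ the second component becomes $v^m$. The remaining third component is then divisible by $v^{m+1}$ because all lower $v$-derivatives have been absorbed into the $uv$-plane; writing it as $v^{m+1}a(u,v)$ exhibits $f$ as $\A$-equivalent to $(u,v^m,v^{m+1}a(u,v))$, which is exactly the definition of an $m$-type edge.

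For the $(m,n)$-type claim, I would push the expansion further in the third component. Condition [\ref{itm:cri3}] asserts that for $m<i<n$ the vectors $\xi f$, $\eta^m f$, $\eta^i f$ are linearly dependent (rank $2$), meaning every intermediate $v$-derivative $f_{v^i}(u,0)$ already lies in the $2$-plane spanned by $f_u$ and $f_{v^m}$; hence after the normalization above these intermediate terms contribute nothing to the third coordinate, so the third component has a zero of order exactly $n$ in $v$. Condition [\ref{itm:cri4}], $\rank(\xi f,\eta^m f,\eta^n f)=3$ at $p$, guarantees that $f_{v^n}(u,0)$ escapes that $2$-plane at the marked point, so the coefficient of $v^n$ is nonzero there. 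Writing the third component as $v^n h(u,v)$ with $h(0,0)\ne 0$ and rescaling so that $h(0,0)=1$ yields the required form $(u,v^m,v^n h(u,v))$.

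The main obstacle I expect is organizing the target changes so that the two (then three) spanning vectors are straightened consistently and smoothly in $u$, while simultaneously performing the source changes that normalize the first two components to $u$ and $v^m$ without reintroducing lower-order terms in the third component. Concretely, one must verify that after fixing the first component to be $u$ and the second to be $v^m$, the freedom remaining in source and target diffeomorphisms suffices to clear precisely the terms controlled by [\ref{itm:cri1}] and [\ref{itm:cri3}]; this is where the standard but delicate bookkeeping of $\A$-equivalence normal forms lives. The inequality $n<2m$ assumed throughout the subsection should be what prevents interference between the $v^m$ term in the second slot and the $v^n$ term in the third, so I would keep track of where that bound is used.
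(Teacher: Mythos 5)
Your outline follows the same route as the paper's proof: pass to an adapted coordinate system in which the null field is $\partial_v$, use \ref{itm:cri1} to write $f=p(u)+v^mq(u,v)$, use \ref{itm:cri2} to straighten $f_u(u,0)$ and $f_{v^m}(u,0)$ by target changes, rescale $v$ so the second component is exactly $v^m$, and then read \ref{itm:cri3}--\ref{itm:cri4} off the third component. Two points, however, are left in a state where the argument does not yet close. First, you only require $\partial_v=\eta$ \emph{along} $S(f)$. That is not enough to identify $\eta^if|_{S(f)}$ with $f_{v^i}(u,0)$ for $i\geq 2$: the iterated derivative $\eta^i f$ depends on the extension of $\eta$ off $S(f)$ (for instance, if $\eta=\partial_v+va\,\partial_u$ then $\eta^2f=f_{v^2}+af_u+\cdots$ on $v=0$). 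You need $\eta=\partial_v$ on a whole neighbourhood, which is exactly what the paper's Lemma \ref{lem:adaptedex} supplies.

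Second, the ``delicate bookkeeping'' you defer is the actual content of the proof, not a routine afterthought. Each normalization step changes either the target (a diffeomorphism, not merely an isometry ``depending on $u$'') or the source (e.g.\ $V=v\,q_2(u,v)^{1/m}$), and after such a step the distinguished vector field $\partial_V$ is no longer the null field $\eta$ you started with. So before you can ``read off'' \ref{itm:cri2}--\ref{itm:cri4} in the final coordinates, you must know that these conditions are invariant under target diffeomorphisms, under functional multiples of $\eta$, and under replacing one null field satisfying \ref{itm:cri1} by another; these are the paper's Lemmas \ref{lem:targetnot}, \ref{lem:etamulti} and \ref{lem:etachange}. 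The hypothesis $n<2m$ is used precisely in Lemma \ref{lem:etachange} (to control the extra terms $a_{v^i}f_u$ and $\psi_{v^i}$ produced when exchanging null fields), not, as you guess, to prevent interference between the $v^m$ and $v^n$ slots. Since you explicitly identify this verification as the main obstacle but do not carry it out, the final step of your argument --- concluding the order of vanishing of the third component from \ref{itm:cri3} and \ref{itm:cri4} --- is not yet justified.
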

As we will see, the conditions \ref{itm:cri2}--\ref{itm:cri4}
does not depend on the choice of 
null vector field $\eta$ satisfying \ref{itm:cri1}.
To show this fact, we show several lemmas which we shall need later.
Firstly we show that
the conditions does not depend on the choice of the diffeomorphism
on the target.
In what follows in this section, 
$f$ is as in Proposition \ref{prop:sufcond}.
\begin{lemma}\label{lem:targetnot}
Let\/ $\Phi$ be a diffeomorphism-germ on\/ $(\R^3,0)$,
and set\/ $\hat f=\Phi(f)$.
If\/ $f$ and\/ $(\xi,\eta)$ satisfy the condition\/ $C$, then\/
$\hat f$ and\/ $(\xi,\eta)$ satisfy\/ $C$, where\/ 
$C=\{\ref{itm:cri1}\}$,
$C=\{\ref{itm:cri1},\ref{itm:cri2}\}$,
$C=\{\ref{itm:cri1}$--$\ref{itm:cri3}\}$ and\/
$C=\{\ref{itm:cri1}$--$\ref{itm:cri4}\}$.
\end{lemma}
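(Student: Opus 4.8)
The plan is to combine the higher-order chain rule (Faà di Bruno's formula) with the null condition $\eta f=0$ on $S(f)$ and the standing hypothesis $n<2m$. I begin by recording that, since $d\Phi_f$ is a linear isomorphism, $\rank d\hat f=\rank df$ at every point; hence $S(\hat f)=S(f)$ and $\Ker d\hat f_p=\Ker df_p$, so $\eta$ and $\xi$ keep their roles as null and tangent vector fields for $\hat f$. Writing $\hat f=\Phi\circ f$ and differentiating $i$ times along $\eta$ yields the structural identity
\[
\eta^i\hat f=d\Phi_f(\eta^i f)+P_i,
\]
where $P_i$ is a sum of terms, each a higher differential $d^k\Phi_f$ with $k\ge 2$ evaluated on a product $\eta^{j_1}f,\dots,\eta^{j_k}f$ with $j_1+\cdots+j_k=i$ and every $j_l\ge 1$. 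The point is that the leading term is an isomorphism applied to $\eta^i f$, while every summand of the correction $P_i$ contains at least two factors of the form $\eta^{j}f$ with $1\le j\le i-1$. The analogous correction-free first-order identity $\xi\hat f=d\Phi_f(\xi f)$ also holds.

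Next I would localize everything to $S(f)$. As $\eta$ is a null vector field, $\eta f=df(\eta)=0$ on $S(f)$, and together with \ref{itm:cri1} this gives $\eta^j f=0$ on $S(f)$ for all $1\le j\le m-1$. The crux of the argument, and the only place where $n<2m$ is used, is the claim that $P_i$ vanishes on $S(f)$ for every $i\le n$. Indeed, each summand of $P_i$ is a product of $k\ge 2$ factors $\eta^{j_l}f$ whose orders satisfy $\sum_l j_l=i\le n<2m$; were all $j_l\ge m$ we would get $i\ge 2m$, a contradiction, so some factor has order $j_l\le m-1$ and therefore vanishes on $S(f)$. Consequently $\eta^i\hat f=d\Phi_f(\eta^i f)$ on $S(f)$ for every $m\le i\le n$.

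Finally, the four assertions drop out from the invertibility of $d\Phi_f$, which preserves both the zero vector and the rank of any tuple of vectors. Condition \ref{itm:cri1} for $f$ gives $\eta^i f=0$ on $S(f)$ for $2\le i\le m-1$, whence $\eta^i\hat f=d\Phi_f(0)+P_i=0$ there, i.e.\ \ref{itm:cri1} for $\hat f$. On $S(f)$ the tuples $(\xi\hat f,\eta^m\hat f)$, $(\xi\hat f,\eta^m\hat f,\eta^i\hat f)$ with $m<i<n$, and $(\xi\hat f,\eta^m\hat f,\eta^n\hat f)$ are precisely the $d\Phi_f$-images of the corresponding tuples formed from $f$, so their ranks coincide with those of $f$; this yields \ref{itm:cri2}, \ref{itm:cri3} and \ref{itm:cri4} in turn. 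I expect the genuine work to be the Faà di Bruno bookkeeping that produces the decomposition $\eta^i\hat f=d\Phi_f(\eta^i f)+P_i$ with the stated block structure; once the degree constraint $\sum_l j_l=i<2m$ forces a low-order factor into every correction term, the conclusion is immediate.
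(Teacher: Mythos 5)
Your proof is correct and follows essentially the same route as the paper: expand the iterated $\eta$-derivative of $\Phi\circ f$ by the chain rule, observe that under \ref{itm:cri1} (together with $\eta f=0$ on $S(f)$ and $n<2m$) every correction term contains a factor $\eta^{j}f$ with $1\le j\le m-1$ and hence vanishes on $S(f)$, and conclude from the invertibility of $d\Phi$ that all the rank conditions are preserved. The only difference is cosmetic: the paper organizes the expansion as the Leibniz sum \eqref{eq:etai} in terms of $\eta^{j}(d\Phi(f))\,\eta^{i-j}f$ rather than the full Fa\`a di Bruno decomposition, and your explicit degree count $\sum_l j_l=i<2m$ spells out the step the paper leaves terse for the cases $C=\{\ref{itm:cri1}$--$\ref{itm:cri3}\}$ and $C=\{\ref{itm:cri1}$--$\ref{itm:cri4}\}$.
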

\begin{proof}
Let us assume $\eta$ satisfies \ref{itm:cri1}.
By a direct calculation, we have
$\eta\hat f=d\Phi(f)\eta f$, and
\begin{equation}\label{eq:etai}
\eta^i\hat f=\sum_{j=0}^{i-1}
c_{ij}\eta^j(d\Phi(f))\eta^{i-j} f\quad(c_{ij}\in\R\setminus\{0\}).
\end{equation}
By \ref{itm:cri1}, 
$\eta^i\hat f=0$ $(2\leq i\leq m-1)$ and
$\eta^m\hat f=d\Phi(f)\eta^m f$ on $S(f)$.
Then we see the assertion for the cases $C=\{\ref{itm:cri1}\}$
and $C=\{\ref{itm:cri1},\ref{itm:cri2}\}$.
We assume $\eta$ satisfies \ref{itm:cri1}--\ref{itm:cri3}.
By \eqref{eq:etai} and $c_{1n}=1 (\ne0)$ we see the assertion.
\end{proof}
It is clear that the conditions 
\ref{itm:cri2}--\ref{itm:cri4} do not depend 
on the choice of $\xi$, i.e., non-zero functional multiple and
extension other than $S(f)$. 
Moreover, it does not depend on the 
non-zero functional multiple of $\eta$:
\begin{lemma}\label{lem:etamulti}
Let $h$ be a non-zero function.
If $f$ and $(\xi,\eta)$ satisfy the condition $C$, then
$f$ and $(\xi,\hat\eta)$ satisfy $C$, where $\hat\eta=h\eta$ and
$C$ is the same as those in Lemma \ref{lem:targetnot}.
\end{lemma}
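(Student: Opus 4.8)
The plan is to reduce the whole statement to a single expansion formula comparing iterated derivatives by $\hat\eta=h\eta$ with those by $\eta$. First I would establish, by induction on $i$, the identity
\begin{equation*}
\hat\eta^i f = h^i\,\eta^i f+\sum_{j=1}^{i-1}q_{ij}\,\eta^j f
\end{equation*}
for suitable functions $q_{ij}$ built from $h$ and its $\eta$-derivatives. The base case $i=1$ is just $\hat\eta f=h\,\eta f$, and the inductive step follows by applying $\hat\eta=h\eta$ and the Leibniz rule: the top-order contribution $h\cdot h^i\eta^{i+1}f=h^{i+1}\eta^{i+1}f$ is produced, while every newly created term is a functional multiple of some $\eta^j f$ with $j\le i$. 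The crucial features are that the leading coefficient is the nonzero function $h^i$ and that the correction terms are strictly lower order in $\eta$.

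Next I would feed into this expansion the vanishing coming from \ref{itm:cri1}. Since $\eta$ is a null vector field, $\eta f=df(\eta)=0$ on $S(f)$, so together with \ref{itm:cri1} we have $\eta^j f=0$ on $S(f)$ for all $1\le j\le m-1$. Substituting: for $2\le i\le m-1$ every term on the right vanishes on $S(f)$, so $\hat\eta^i f=0$ there, which is \ref{itm:cri1} for $(\xi,\hat\eta)$. For $i=m$ the whole sum $\sum_{j=1}^{m-1}q_{mj}\eta^j f$ vanishes on $S(f)$, hence $\hat\eta^m f=h^m\eta^m f$ on $S(f)$; since $h\ne0$, the columns $\xi f$ and $\hat\eta^m f$ span the same plane as $\xi f$ and $\eta^m f$, so $\rank(\xi f,\hat\eta^m f)=\rank(\xi f,\eta^m f)$ and \ref{itm:cri2} is preserved. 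In particular $\spann{\xi f,\hat\eta^m f}=\spann{\xi f,\eta^m f}$ on $S(f)$, an identity I will use repeatedly.

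For the remaining conditions the point is that the correction terms fall inside this plane. For $m<i<n$, the terms with $j\le m-1$ again vanish, so on $S(f)$
\begin{equation*}
\hat\eta^i f=h^i\eta^i f+\sum_{j=m}^{i-1}q_{ij}\eta^j f ,
\end{equation*}
and by \ref{itm:cri3} each $\eta^j f$ with $m\le j<n$ lies in $\spann{\xi f,\eta^m f}$; hence $\hat\eta^i f\in\spann{\xi f,\eta^m f}=\spann{\xi f,\hat\eta^m f}$, giving $\rank(\xi f,\hat\eta^m f,\hat\eta^i f)=2$, i.e. \ref{itm:cri3} for $(\xi,\hat\eta)$. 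Finally, for $i=n$ the same substitution yields $\hat\eta^n f= h^n\eta^n f + w$ at $p$, where $w\in\spann{\xi f,\eta^m f}$ by \ref{itm:cri3}. Since $h\ne0$ and the plane is unchanged by passing from $\eta^m f$ to $\hat\eta^m f$, column operations give $\rank(\xi f,\hat\eta^m f,\hat\eta^n f)=\rank(\xi f,\eta^m f,\eta^n f)=3$, which is \ref{itm:cri4}.

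The only delicate point is the bookkeeping: one must track exactly which correction terms survive on $S(f)$. The argument hinges on the triangular structure of the expansion (leading term $h^i\eta^i f$, lower terms in $\eta^j f$ with $j<i$) combined with two sources of control, namely the vanishing $\eta^j f=0$ for $j\le m-1$ from \ref{itm:cri1} and the containment $\eta^j f\in\spann{\xi f,\eta^m f}$ for $m\le j<n$ from \ref{itm:cri3}. Once these are in place, each rank statement is preserved simply because rescaling a spanning vector by the nonzero factor $h$ and adding vectors already in the span alters neither the span nor the rank.
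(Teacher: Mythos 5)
Your proposal is correct and is essentially the paper's own argument written out in full: the paper's proof consists precisely of the observation that $(h\eta)^i f$ is a linear combination of $\eta f,\ldots,\eta^i f$ with leading coefficient $h^i$, and you supply the induction and the bookkeeping (vanishing of $\eta^j f$ for $j\le m-1$ on $S(f)$ via \ref{itm:cri1}, containment of $\eta^j f$ for $m\le j<n$ in $\spann{\xi f,\eta^m f}$ via \ref{itm:cri3}) that the paper leaves implicit.
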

\begin{proof}
Since $(h\eta)^if$ is a linear combination of
$\eta f,\ldots,\eta^if$, and the coefficient of $\eta^if$ is $h^i$,
we see the assertion.
\end{proof}
A coordinate system $(u,v)$ satisfying
$S(f)=\{v=0\}$, $\eta|_{S(f)}=\partial_v$ is 
said to be {\it adapted\/}.
\begin{lemma}\label{lem:adaptedex}
Let $f:(\R^2,0)\to(\R^3,0)$ be a frontal-germ
satisfying $\rank df_0=1$.
We assume that the set of singular points $S(f)$ is a regular curve.
For any null vector field $\eta$, there exists an adapted coordinate system
$(u,v)$ such that $\eta=\partial_v$ for any $(u,v)$.
\end{lemma}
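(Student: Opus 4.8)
The plan is to build the adapted coordinates directly from the flow of the extended null vector field $\eta$: flowing the singular curve along $\eta$ simultaneously straightens $\eta$ to $\partial_v$ and realizes $S(f)$ as $\{v=0\}$.

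First I would note that $\eta$ is non-vanishing near $0$. Since $\rank df_0=1$, the kernel $\ker df_0$ is one-dimensional and $\eta_0$ generates it, so $\eta_0\ne\zv$; by continuity $\eta$ has no zeros in a neighborhood of $0$. Hence the local flow $\Psi_t$ of $\eta$ is defined and smooth jointly in $t$ and the base point for small $t$, with $\Psi_0=\mathrm{id}$ and $\frac{d}{dt}\Psi_t=\eta\circ\Psi_t$. Choosing a regular parametrization $\gamma(u)$ of $S(f)$ with $\gamma(0)=0$, I would then set
\[
\Theta(u,v)=\Psi_v(\gamma(u)),
\]
a smooth map-germ with $\Theta(0,0)=0$, whose differential at the origin sends $\partial_u$ to $\gamma'(0)$ (a non-zero tangent vector of $S(f)$) and $\partial_v$ to $\eta_0$ (a generator of $\ker df_0$).

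The key step is to verify that $\Theta$ is a local diffeomorphism, and this is exactly where the standing hypothesis inherited from Proposition \ref{prop:sufcond} enters: the tangent direction of $S(f)$ is not contained in $\ker df_0$, so $\gamma'(0)$ and $\eta_0$ are linearly independent and $d\Theta_{(0,0)}$ is an isomorphism. By the inverse function theorem $\Theta$ is a diffeomorphism from a neighborhood of $(0,0)$ onto a neighborhood of $0$, and I would take $(u,v):=\Theta^{-1}$ as the coordinate system.

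It then remains to read off the adapted properties, in fact the stronger claim $\eta=\partial_v$ everywhere. Since $\Theta(u,0)=\gamma(u)$, the locus $\{v=0\}$ is precisely $S(f)$; and pushing $\partial_v$ forward by $\Theta$ gives $d\Theta(\partial_v)=\eta\circ\Theta$, so that $\partial_v=\eta$ on the whole coordinate neighborhood, whence $\eta|_{S(f)}=\partial_v$ in particular. I expect the only substantive point to be the linear-independence verification underlying the inverse function theorem; the rest is the standard flow-box construction for a non-vanishing vector field.
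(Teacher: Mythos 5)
Your proof is correct and is essentially the paper's argument: both rest on straightening the non-vanishing vector field $\eta$ and on the transversality of $S(f)$ to $\ker df_0$ (the standing hypothesis you correctly identify as the one substantive input). The only difference is packaging --- the paper first invokes the flow-box theorem to get $\eta=\partial_v$ and then shears by $\tilde v=v-a(u)$ to put $S(f)$ on the $u$-axis, whereas you obtain both properties in a single application of the inverse function theorem by flowing the parametrized singular curve along $\eta$.
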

In this lemma, we do not assume that $f$ is an $m$-type edges.
\begin{proof}
Since $\rank df_0=1$,  one can easily see 
that there exists a coordinate system $(u,v)$ such
that $\eta=\partial_v$ for any $(u,v)$.
Since $S(f)$ is a regular curve, and the 
tangent direction of it is not in $\ker df_0$,
$S(f)$ can be parametrized as $(u,a(u))$.
Define a new coordinate system $(\tilde u,\tilde v)$ by
$\tilde u=u$ and $\tilde v=v-a(u)$.
Then $S(f)=\{\tilde v=0\}$ and $\partial/\partial \tilde v=\partial/\partial v$
hold.
This shows the assertion.
\end{proof}
\begin{lemma}\label{lem:etachange}
If two null vector fields $\eta,\tilde\eta$ satisfy \ref{itm:cri1},
and $(\xi,\eta)$ satisfies $C$, then 
$(\xi,\tilde\eta)$ also satisfies $C$.
Here, $C$ is the collection of the conditions:
$C=\{\ref{itm:cri2}\}$,
$C=\{\ref{itm:cri2},\ref{itm:cri3}\}$,
$C=\{\ref{itm:cri2}$--$\ref{itm:cri4}\}$.
\end{lemma}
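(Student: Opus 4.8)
The plan is to work in a coordinate system adapted to $\eta$ and then track, order by order along $S(f)$, how the iterated derivatives $\tilde\eta^i f$ differ from $\eta^i f$. First I would invoke Lemma \ref{lem:adaptedex} to fix coordinates $(u,v)$ with $\eta=\partial_v$ and $S(f)=\{v=0\}$; then $\eta^i f=f_{v^i}$, and \ref{itm:cri1} for $\eta$ reads $f_{v^i}|_{v=0}=0$ for $2\le i\le m-1$ (and $f_v|_{v=0}=0$ as well, since $\partial_v$ is null). Writing $\tilde\eta=\alpha\partial_u+\beta\partial_v$, the fact that $\tilde\eta$ is a null vector field gives $\alpha(u,0)=0$ and $\beta(u,0)=:b(u)\neq0$, so $\alpha=v\tilde\alpha$ for a smooth $\tilde\alpha$. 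Note also that on $S(f)$ the vector $\xi f$ is a nonzero multiple of $f_u|_{v=0}$, since $\xi$ is tangent to $S(f)=\{v=0\}$ and transverse to $\ker df$, and that $f_u|_{v=0}\neq0$. (By Lemma \ref{lem:etamulti} one may further normalise $b\equiv1$, but this is inessential.)

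The key point, and the main obstacle, is that a priori $\tilde\eta^i f|_{v=0}$ could contain higher $u$-derivatives $f_{u^a v^b}|_{v=0}$ with $a\ge2$ --- derivatives of the boundary curve $u\mapsto f(u,0)$ --- which need not lie in $\spann{\xi f,\eta^m f}$. I would rule these out in two steps. Applying $\tilde\eta$ to a monomial operator $c\,\partial_u^a\partial_v^b$ produces the four pieces $\alpha c_u\partial_u^a\partial_v^b$, $\alpha c\,\partial_u^{a+1}\partial_v^b$, $\beta c_v\partial_u^a\partial_v^b$ and $\beta c\,\partial_u^a\partial_v^{b+1}$; since each raising of the $\partial_u$-order carries a factor of $\alpha$, hence of $v$, a count of the order of vanishing along $v=0$ controls which $f_{u^a v^b}$ survive at $v=0$. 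Feeding this into \ref{itm:cri1} for $\tilde\eta$ (together with \ref{itm:cri1} for $\eta$ and $f_u|_{v=0}\neq0$) forces $\alpha$ to vanish to order at least $m-1$ along $S(f)$, i.e. $\alpha=v^{m-1}\alpha_0$. With this, each $\partial_u$-raising costs $v$-order at least $m-1$, so a surviving $f_{u^a v^b}|_{v=0}$ requires $am+b\le i$. For $m\le i\le n$ and $n<2m$ this excludes every $a\ge2$, and leaves among the $u$-derivative terms only $a=1,b=0$ (since $f_{u v^b}|_{v=0}=\partial_u(\eta^b f|_{S(f)})=0$ for $1\le b\le m-1$, while $f_u|_{v=0}\in\spann{\xi f}$), the pure $v$-part being $b^i\eta^i f$ plus a combination of $\eta^m f,\dots,\eta^{i-1}f$. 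Hence
\begin{equation*}
\tilde\eta^i f\equiv b^i\,\eta^i f \pmod{\spann{\xi f,\eta^m f,\dots,\eta^{i-1}f}}\quad\text{on }S(f),\qquad m\le i\le n,
\end{equation*}
with $b^i\neq0$.

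Finally I would read off each case from this congruence. Taking $i=m$ gives $\spann{\xi f,\tilde\eta^m f}=\spann{\xi f,\eta^m f}$, so \ref{itm:cri2} for $(\xi,\eta)$ yields \ref{itm:cri2} for $(\xi,\tilde\eta)$; this settles $C=\{\ref{itm:cri2}\}$. Assuming in addition \ref{itm:cri3}, the plane $P:=\spann{\xi f,\eta^m f}$ contains every $\eta^j f$ with $m\le j<n$, so the modulus $\spann{\xi f,\eta^m f,\dots,\eta^{i-1}f}$ equals $P$ for $m\le i\le n$; the congruence then places each $\tilde\eta^j f$ $(m<j<n)$ in $P=\spann{\xi f,\tilde\eta^m f}$, giving \ref{itm:cri3} for $(\xi,\tilde\eta)$ and settling $C=\{\ref{itm:cri2},\ref{itm:cri3}\}$. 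Assuming finally \ref{itm:cri4}, i.e. $\eta^n f\notin P$ at $p$, the congruence at $i=n$ reads $\tilde\eta^n f=b^n\eta^n f+(\text{element of }P)$ with $b^n\neq0$, whence $\tilde\eta^n f\notin P=\spann{\xi f,\tilde\eta^m f}$ at $p$; this is \ref{itm:cri4} for $(\xi,\tilde\eta)$ and completes $C=\{\ref{itm:cri2}\text{--}\ref{itm:cri4}\}$.
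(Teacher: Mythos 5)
Your proposal is correct and takes essentially the same route as the paper's proof: both pass to an adapted coordinate system with $\eta=\partial_v$, use \ref{itm:cri1} for $\tilde\eta$ together with $f_u\ne0$ to force the $\partial_u$-component of $\tilde\eta$ to vanish to order $m-1$ along $S(f)$, and then use $f_{uv}=\dots=f_{uv^{m-1}}=0$ on $S(f)$ and $n<2m$ to check that $\tilde\eta^{i}f$ agrees with a nonzero multiple of $\eta^{i}f$ modulo $\spann{\xi f,\eta^mf,\dots,\eta^{i-1}f}$. Your uniform order-counting congruence is just a cleaner, more systematic packaging of the explicit computations of $\tilde\eta^{m+i}f$ on the $u$-axis carried out in the paper.
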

\begin{proof}
Let us assume that $\eta$ and $\tilde\eta$ satisfy \ref{itm:cri1}.
Since the assumption \ref{itm:cri1} and 
the assertion do not depend on the choice of the
coordinate system on the source by Lemma \ref{lem:adaptedex}, 
we take $(u,v)$ an adapted coordinate system
with $\eta=\partial_v$ for any $(u,v)$
and $\xi=\partial_u$.
Since $f_v=\cdots=f_{v^{m-1}}=0$ on the $u$-axis,
$f_v$ has the form 
$f_v=v^{m-1}\psi(u,v)$.
If the pair $(\xi,\eta)$ satisfies \ref{itm:cri2}, then
$\rank(f_u,\psi)=2$ on the $u$-axis.
On the other hand, any null vector field is written as
$
a_1(u,v)\partial_u+a_2(u,v)\partial_v
$, $(a_1(u,0)=0,\,a_2(u,v)\ne0)$.
By Lemma \ref{lem:etamulti},
dividing this by $a_2$, we may assume an extended null vector
field $\tilde\eta$ is 
$$
\tilde\eta=va(u,v)\partial_u+\partial_v.
$$
Since it holds that $\tilde\eta^2f=0$ on the $u$-axis (when $m>2$) and
$f_u(u,0)\ne0$, we have $a(u,0)=0$. Continuing this argument,
we may assume
\begin{equation}\label{eq:tileta}
\tilde\eta=v^{m-1}a(u,v)\partial_u+\partial_v.
\end{equation}
Thus $\tilde\eta f=v^{m-1}(af_u+\psi)$ holds, and
$
\tilde\eta^mf=(m-1)!(af_u+\psi)
$ holds on the $u$-axis.
Therefore $(\xi,\tilde\eta)$ satisfies \ref{itm:cri2}.
We assume that the pair $(\xi,\eta=\partial_v)$ satisfies
\ref{itm:cri1}-\ref{itm:cri3} and $(\xi,\tilde\eta)$
does \ref{itm:cri1}, \ref{itm:cri2}.
By this assumption, 
$\rank(f_u,\psi)=2$,
$\rank(f_u,\psi,\psi_{v^i})=2$
$(0<i<n-m)$.
By the form of $\tilde\eta$, it holds that
$\tilde\eta^{m+1}f=(m-1)!(a_vf_u+a f_{uv}+\psi_v)$
on the $u$-axis.
Since $f_{v}=0$ on the $u$-axis, $f_{uv}=0$ on the $u$-axis.
Thus $\rank (\xi f,\tilde\eta^mf,\tilde\eta^{m+1}f)=2$ on the $u$-axis.
Similarly, $f_{v^{m-1}}=0$ on the $u$-axis, 
$f_{uv^2}=\cdots=f_{uv^{m-1}}=0$ on the $u$-axis.
Thus if $i\leq m-1$, then since $n<2m$, we have
$
\tilde\eta^{m+i}f=(m-1)!(\psi_{v^{i}}+a_{v^i}f_u)
$ on the $u$-axis.
Thereby we have
$\rank (\xi f,\tilde\eta^mf,\tilde\eta^{m+i}f)=2$ ($i\leq m-2$)
on the $u$-axis.
The last assertion can be shown by the same calculation.
\end{proof}
\begin{proof}[Proof of Proposition \ref{prop:sufcond}]
We assume $f$ satisfies the condition of the proposition, and $(\xi,\eta)$ 
satisfies
the conditions \ref{itm:cri1} and \ref{itm:cri2}.
Then we take an adapted coordinate system $(u,v)$ such that $\eta=\partial_v$.
By the proof of Lemma \ref{lem:etachange},
there exist $p(u)$ and $q(u,v)$ such that
$f(u,v)=p(u)+v^m q(u,v)$, and $(p_1)_u(0,0)\ne0$, where $p=(p_1,p_2,p_3)$.
We set $U=p_1(u),V=v$. Then $f$ has the form
$(U,P_2(U),P_3(U))+V^mQ(U,V)$.
By a coordinate change on the target,
$f$ has the form $(U,0,0)+V^mQ(U,V)$, 
where $Q(U,V)=(0,Q_2(U,V),Q_3(U,V))$.
Rewriting the notation, we may assume $f$ is written as
\[f(u,v)=(u,v^mq_2(u,v),v^mq_3(u,v)).\]
On this coordinate system, $\partial_v$ satisfies the condition
\ref{itm:cri1}, it satisfies \ref{itm:cri2} by Lemma \ref{lem:etachange}.
This implies $(q_2(0,0),q_3(0,0))\ne(0,0)$.
So we assume $q_2(0,0)\ne0$.
We set $U=u$, $V=vq_2(u,v)^{1/m}$.
Rewriting the notation, we may assume $f$ is written as
$(u,v^m,v^mq_3(u,v))$.
By a coordinate change on the target, we may assume $f$ is written as
$(u,v^m,v^{m+1}q_3(u,v))$.
This proves the first assertion.
We assume that $\eta$ also satisfies \ref{itm:cri3} and \ref{itm:cri4}.
We may assume $f$ is written as 
$(u,v^m,v^{m+1}q_3(u,v))$.
By Lemma \ref{lem:etachange}, we may assume that $\partial_v$ satisfies
\ref{itm:cri3} and \ref{itm:cri4}.
By \ref{itm:cri3}, the function $q_3(u,v)$ satisfies
$
q_3=(q_3)_v=\cdots(q_3)_{v^{n-m-1}}=0
$
on the $u$-axis.
Thus $f$ is written as 
$(u,v^m,v^{n}q_4(u,v))$.
By \ref{itm:cri4}, it holds that $q_4\ne0$,
and hence the assertion is proved.
\end{proof}
By the proof of Lemma \ref{lem:etachange}, we have the following property:
\begin{corollary}\label{cor:div}
Let $f:(\R^2,0)\to(\R^3,0)$ be a
frontal satisfying $\rank df_0=1$ and
the set of singular points $S(f)$ is a regular curve.
Furthermore, assume a vector field $\eta$ satisfying
the condition \ref{itm:cri1} with $\partial_v$ satisfying \ref{itm:cri1}.
Let $(u,v)$ be an adapted
coordinate system.
Then there exists $\psi$ such that
$
\eta f(u,v)=v^{m-1}\psi(u,v)
$.
\end{corollary}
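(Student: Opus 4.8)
The plan is to show that, in the given adapted coordinate system $(u,v)$, the $\R^3$-valued function $\eta f$ vanishes to order $m-1$ along the $u$-axis $S(f)=\{v=0\}$; once this is known, Hadamard's lemma (Taylor expansion in $v$ with smooth remainder) immediately produces a smooth $\psi$ with $\eta f=v^{m-1}\psi$. This is essentially the computation already performed inside the proof of Lemma \ref{lem:etachange}, where the special adapted system with $\eta=\partial_v$ on all of $(\R^2,0)$ was used and yielded $f_v=v^{m-1}\psi$. So the real content here is to verify that the same vanishing persists for a general null vector field $\eta$, which need not equal $\partial_v$ away from $S(f)$.

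First I would assemble the vanishing data available on $S(f)=\{v=0\}$: the null condition $\eta f=0$ there (since $\eta$ generates $\ker df$ on $S(f)$), together with condition \ref{itm:cri1}, namely $\eta^{i}f=0$ on $S(f)$ for $2\le i\le m-1$. Combined, these say $\eta^{i}f|_{v=0}=0$ for every $1\le i\le m-1$. The objective is to convert these $\eta$-derivatives into ordinary $v$-derivatives of $G=\eta f$, that is, to prove $\partial_v^{\,j}G|_{v=0}=0$ for $0\le j\le m-2$, which is equivalent to the divisibility $v^{m-1}\mid G$.

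The key step is the identity $\partial_v^{\,j}G|_{v=0}=\eta^{j+1}f|_{v=0}$, proved by induction on $j$. Because $(u,v)$ is adapted, $\eta-\partial_v$ has both coefficients vanishing on $\{v=0\}$, so I would write $\eta=\partial_v+vW$ for a smooth vector field $W$. Assuming inductively that $v^{j}\mid G$, say $G=v^{j}\tilde G$, I expand $\eta^{j+1}f=\eta^{j}G=(\partial_v+vW)^{j}G$ and evaluate on $\{v=0\}$: the pure term $\partial_v^{\,j}(v^{j}\tilde G)$ contributes $j!\,\tilde G$, while every monomial containing at least one factor $vW$ has strictly positive order in $v$ and hence dies on the axis. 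Indeed each $\partial_v$ lowers the $v$-order by at most $1$ and each $vW$ raises it by at least $0$, so a monomial with $p\ge 1$ factors $vW$, applied to $v^{j}\tilde G$, has $v$-order at least $p\ge 1$. Thus $\eta^{j+1}f|_{v=0}=j!\,\tilde G|_{v=0}=\partial_v^{\,j}G|_{v=0}$, and since $\eta^{j+1}f|_{v=0}=0$ for $0\le j\le m-2$ by the conditions above, we conclude $\tilde G|_{v=0}=0$, i.e. $v^{j+1}\mid G$. Iterating up to $j=m-2$ gives $v^{m-1}\mid G$ as required.

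The main obstacle is precisely this mixing of $\partial_u$ into the iterated directional derivatives $\eta^{i}f$ when $\eta\neq\partial_v$ off $S(f)$; the decomposition $\eta=\partial_v+vW$ together with the $v$-order bookkeeping is what tames the cross terms and shows they contribute nothing on the axis. An alternative that sidesteps the bookkeeping is to invoke Lemma \ref{lem:adaptedex} to pass to the adapted chart in which $\eta=\partial_v$ identically, where $\eta f=f_v$ and the factorization is immediate from the proof of Lemma \ref{lem:etachange}, and then to observe that $\eta f$ is intrinsic while $S(f)=\{v=0\}$ is a reduced smooth curve, so its transverse order of vanishing, hence the divisibility $v^{m-1}\mid\eta f$, is independent of the chosen adapted chart.
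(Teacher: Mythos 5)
Your argument is correct. The engine is the same as the paper's --- using $\eta f|_{S(f)}=0$ together with condition \ref{itm:cri1} to raise the order of vanishing of $\eta f$ along $\{v=0\}$ one step at a time --- but you run it differently. The paper derives the corollary from the proof of Lemma \ref{lem:etachange}: it first uses Lemma \ref{lem:adaptedex} to pass to the adapted chart in which $\eta=\partial_v$ identically, where the factorization $f_v=v^{m-1}\psi$ is immediate from Taylor expansion in $v$, and it separately shows that any null vector field satisfying \ref{itm:cri1} can be normalized to the form $v^{m-1}a\,\partial_u+\partial_v$, from which $\eta f=v^{m-1}(af_u+\psi)$ is read off. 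You instead stay in an arbitrary adapted chart, decompose $\eta=\partial_v+vW$ with $W$ smooth, and establish $\eta^{j+1}f|_{v=0}=j!\,\tilde G|_{v=0}$ by expanding $(\partial_v+vW)^j$ into noncommutative words and counting $v$-orders: each factor $vW$ does not lower the $v$-order (the $\partial_u$-component preserves it and the extra factor of $v$ compensates the $\partial_v$-component), while each $\partial_v$ lowers it by at most one, so every word containing a $vW$ dies on the axis and only $\partial_v^j$ survives. This bookkeeping is sound, including the base case $j=0$ from the null condition. What your version buys is self-containedness --- no appeal to Lemma \ref{lem:adaptedex} nor to the chart-independence of the transverse vanishing order along the reduced curve $S(f)$; what it forgoes is the explicit normal form $\eta=v^{m-1}a\,\partial_u+\partial_v$, which the paper extracts along the way and reuses in Propositions \ref{lem:notdepend} and \ref{lem:cuspialbias}. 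Your closing alternative is precisely the paper's route.
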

\subsection{Normal form of $m$ or $(m,n)$-type edge}
Given a curve-germ $\gamma:(\R,0)\to(\R^2,0)$,
if there exists $m$ such that
$\gamma'=t^{m-1}\rho$ ($\rho(0)\ne0$),
then $\gamma$ at $0$ is said to be of {\it finite multiplicity\/},
and such an $m$ is called the {\it multiplicity\/} or the {\it order\/}
of $\gamma$ at $0$.
Moreover, if there exists $n$ 
($n>m$ and $n\ne km$, $k=2,3,\ldots$) such that
$\gamma$ is $\A^n$-equivalent to
$(t^m,t^n)$,
then $\gamma$ is called of 
$(m,n)$-{\it type\/}.
This $(m,n)$ is well-defined since
if $\gamma$ is $\A^r$-equivalent to $(t^m,0)$ then
it is not $\A^r$-equivalent to $(t^m,t^i)$ for $i\leq r$,
$i\ne km$ $(k=1,2,\ldots)$.
We simplify a curve-germ of $(m,n)$-type and
an $(m,n)$-type edge
by coordinate changes on the source and
by special orthonormal matrices on the target.
Let $(x,y)$ be the ordinary coordinate system of $(\R^2,0)$.
A coordinate system $(u,v)=(u(x,y),v(x,y))$ is {\it positive\/}
if the determinant of the Jacobi matrix of $(u(x,y),v(x,y))$ is positive.
We have the following results.
\begin{lemma}\label{lem:curvenormal}
Let $\gamma:(\R,0)\to(\R^2,0)$ be a curve germ satisfying
$\gamma^{(i)}(0)=0$ $(i=1,\ldots,m-1)$, and $\gamma^{(m)}(0)\ne0$.
Then there exist
a parameter $t$ and 
a special orthonormal matrix $A$ on $\R^2$ such that
\begin{equation}\label{eq:mnormal}
A \gamma(t)
=
\left(t^m,\ t^{m+1}b(t)\right).
\end{equation}
Let $\gamma:(\R,0)\to(\R^2,0)$ be a curve germ of $(m,n)$-type.
Then there exist
a parameter $t$ and 
a special orthonormal matrix $A$ on $\R^2$ such that
\begin{equation}\label{eq:mnnormal}
A \gamma(t)
=
\left(t^m,\ 
\sum_{i=2}^{\lfloor n/m\rfloor} a_it^{im}+
t^nb(t)
\right)\quad(b(0)\ne0),
\end{equation}
where $\lfloor k\rfloor$ is the greatest integer less than $k$ 
$($in our convention, $n/m$ is not an integer\/$)$.
\end{lemma}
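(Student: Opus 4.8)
The plan is to establish the two assertions in turn: I would obtain \eqref{eq:mnormal} by a single rotation followed by a reparametrization, and then deduce \eqref{eq:mnnormal} from \eqref{eq:mnormal} together with the well-definedness of the $(m,n)$-type recalled just before the lemma. For the first assertion, since $\gamma^{(i)}(0)=\zv$ for $i<m$ and $\gamma^{(m)}(0)\ne\zv$, Taylor's theorem gives $\gamma(t)=t^m\rho(t)$ with $\rho(0)=\gamma^{(m)}(0)/m!\ne\zv$. I would first take the rotation $A\in SO(2)$ sending $\rho(0)$ to $|\rho(0)|\,\e_1$; writing $A\rho=(\rho_1,\rho_2)$ we then have $\rho_1(0)=|\rho(0)|>0$ and $\rho_2(0)=0$, so $A\gamma(t)=(t^m\rho_1(t),\,t^m\rho_2(t))$ with second component of order $\ge m+1$. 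I would then reparametrize by $s=t\,\rho_1(t)^{1/m}$, a local diffeomorphism fixing the origin with positive derivative, the positive $m$-th root being smooth precisely because $\rho_1(0)>0$. In the parameter $s$ the first component becomes $s^m$ and the second, being of order $\ge m+1$, becomes $s^{m+1}b(s)$; relabelling $s$ as $t$ gives \eqref{eq:mnormal}. Thus the rotation is what removes the order-$m$ term of the second component, and the positivity $\rho_1(0)>0$ is exactly what makes the root extraction legitimate also for even $m$.

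For the second assertion I would apply the first one to bring $\gamma$ into the form $(t^m,\tilde y(t))$ with $\tilde y(t)=\sum_{j\ge m+1}c_jt^j$, and then let $j_0$ be the smallest exponent occurring in $\tilde y$ that is not a multiple of $m$. Such a $j_0$ exists: otherwise $\tilde y(t)=g(t^m)$ for a smooth $g$, so the shear $(x,y)\mapsto(x,y-g(x))$ would show $\gamma\sim_\A(t^m,\zv)$, which is excluded by the type. The crux is to prove $j_0=n$. To this end I would strip the resonant terms by the target shear $(x,y)\mapsto(x,\,y-\sum_{2\le i,\,im<j_0}c_{im}x^i)$, a diffeomorphism-germ of $(\R^2,\zv)$ (the order-$m$ term is already absent, so the sum starts at $i=2$). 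This leaves $(t^m,\,c_{j_0}t^{j_0}+(\text{terms of order}>j_0))$, whose $j_0$-jet is $\A$-equivalent to $(t^m,t^{j_0})$ after rescaling $y$; hence $\gamma$ is $\A^{j_0}$-equivalent to $(t^m,t^{j_0})$, so it is of $(m,j_0)$-type, and the well-definedness of the type forces $j_0=n$. Consequently $c_j=0$ for every non-multiple $j$ with $m<j<n$ while $c_n=c_{j_0}\ne 0$; separating the resonant terms of orders $2m,3m,\dots,\lfloor n/m\rfloor m$ from those of order $\ge n$ yields $\tilde y(t)=\sum_{i=2}^{\lfloor n/m\rfloor}a_it^{im}+t^nb(t)$ with $a_i=c_{im}$ and $b(0)=c_n\ne 0$, which is \eqref{eq:mnnormal}.

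I expect the main obstacle to be the identification $j_0=n$, i.e. separating the genuine moduli from the removable data. The resonant coefficients $a_i$ cannot be eliminated, since only isometries (not shears) are permitted on the target, so they must persist in the normal form; by contrast the non-resonant terms below order $n$ are absent not by any further normalization available to us but as a consequence of the $(m,n)$-type hypothesis, which is read off through the auxiliary (non-isometric) shear used only to detect the $\A$-type. The delicate point is therefore to keep these two roles distinct, and to note that $j_0$ is computed from the fixed representative $(t^m,\tilde y)$ obtained once the first assertion has pinned the first component to $t^m$, so that no subsequent reparametrization can disturb the smallest non-resonant exponent.
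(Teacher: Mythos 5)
Your argument is correct and follows essentially the same route as the paper: the first normal form comes from a rotation followed by the reparametrization $s=t\rho_1(t)^{1/m}$, and the second from the observation that a non-resonant Taylor coefficient of order $i<n$ (or the vanishing of all of them up to order $n$) would contradict the well-definedness of the $(m,n)$-type, which is exactly the paper's two-line proof with the shear made explicit. The only slip is the claim that vanishing of all non-resonant Taylor coefficients forces $\tilde y(t)=g(t^m)$ with $g$ smooth --- false in the $C^\infty$ category because of flat terms --- but this is harmless, since the contradiction you need is already visible at the level of $n$-jets, where everything is polynomial.
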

\begin{proof}
One can easily see the first assertion.
We assume that $\gamma$ is a curve germ of $(m,n)$-type,
then we may assume $\gamma(t)=(t^m,t^{m+1}b(t))$.
If $t^{m+1}b(t)$ has a term $t^i$ $(i<n, i\ne km)$, then
$j^n\gamma(0)$ is not $\A^n$-equivalent to $(t^m,t^n)$.
This proves the assertion.
\end{proof}
\begin{proposition}\label{prop:edgenormal}
Let $f:(\R^2,0)\to(\R^3,0)$ be an $m$-type edge.
Then there exist
a positive coordinate system $(u,v)$ and
a special orthonormal matrix $A$ on $\R^3$ such that
\begin{equation}\label{eq:edgenormal1}
A f(u,v)
=
\left(u,
\dfrac{u^2a(u)}{2}+\dfrac{v^{m}}{m!},
\dfrac{u^2b_0(u)}{2}
+\dfrac{v^m}{m!}b_m(u,v)\right)\quad(b_m(0,0)=0).
\end{equation}
Moreover, if $f$ is an $(m,n)$-type edge,
then there exist
a positive coordinate system $(u,v)$ and
a special orthonormal matrix $A$ on $\R^3$ such that
\begin{equation}\label{eq:edgenormal2}
A f(u,v)
=
\left(u,
\dfrac{u^2a(u)}{2}+\dfrac{v^{m}}{m!},
\dfrac{u^2b_0(u)}{2}
+
\sum_{i=2}^{\lfloor n/m\rfloor}\dfrac{v^{im}}{(im)!}b_{im}(u)
+
\dfrac{v^{n}b_n(u,v)}{n!}\right),
\end{equation}
$b_{n}(0,0)\ne0$.
\end{proposition}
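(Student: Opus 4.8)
\emph{Overview.} The plan is to establish \eqref{eq:edgenormal1} first and then refine it to \eqref{eq:edgenormal2}, using only source diffeomorphisms together with a single fixed matrix $A\in SO(3)$, so that no metric information is lost. For \eqref{eq:edgenormal1} I would start from an adapted coordinate system with $\eta=\partial_v$ and $S(f)=\{v=0\}$, which exists by Lemma \ref{lem:adaptedex}. As an $m$-type edge, $f$ is a frontal with $\rank df_0=1$, regular singular curve, and a null field satisfying condition \ref{itm:cri1}, so Corollary \ref{cor:div} gives $f_v=v^{m-1}\psi$; integrating in $v$ (Hadamard's lemma) yields $f(u,v)=p(u)+v^mQ(u,v)$ with $p'(0)\neq0$, $Q(0,0)\neq0$, and $\{p'(0),Q(0,0)\}$ linearly independent, the latter being condition \ref{itm:cri2}. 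I would then choose $A\in SO(3)$ carrying the positively oriented orthonormal basis produced from $\{p'(0),Q(0,0)\}$ by Gram--Schmidt onto $(\e_1,\e_2,\e_3)$; this makes $Ap'(0)$ a positive multiple of $\e_1$ and places $AQ(0,0)$ in $\spann{\e_1,\e_2}$ with positive $\e_2$-component, so that the third coordinate of $Af$ has vanishing $v^m$-coefficient at the origin.

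Since $\partial_u(Af)_1(0)\neq0$, the positive substitution $(u,v)\mapsto((Af)_1(u,v),v)$ makes the first coordinate equal to $u$; because the singular image is tangent to $\e_1$, its remaining coordinates vanish to order two, producing the terms $u^2a(u)/2$ and $u^2b_0(u)/2$. Writing the second coordinate as $u^2a(u)/2+v^m\beta(u,v)$ with $\beta(0,0)>0$, the rescaling $\tilde v=v\,(m!\,\beta)^{1/m}$ normalizes it to $u^2a(u)/2+\tilde v^m/m!$, and after renaming we obtain \eqref{eq:edgenormal1}, the relation $b_m(0,0)=0$ being exactly the vanishing of the third $v^m$-coefficient at $0$ arranged by $A$. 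If the resulting chart is negatively oriented I would replace $v$ by $-v$, compensating when $m$ is odd by the rotation $(x,y,z)\mapsto(x,-y,-z)\in SO(3)$ and absorbing the signs into $a,b_0,b_m$.

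For \eqref{eq:edgenormal2} I would keep the chart of \eqref{eq:edgenormal1} and read off the third coordinate $g_3$ from the transverse slices. By Lemma \ref{lem:edgem}, at every singular point the slice by a plane transverse to $f(S(f))$ is $\A^n$-equivalent to $(t^m,t^n)$; taking the plane $\{x=u_0\}$, this slice is the plane curve $(v^m/m!,\,g_3(u_0,v)-u_0^2b_0(u_0)/2)$, whose first component is already a pure $m$-th power. Comparing with the proof of Lemma \ref{lem:curvenormal} and the model \eqref{eq:mnnormal}, for such a curve the $(m,n)$-type condition forces the coefficient of $v^i$ to vanish for every $m<i<n$ with $m\nmid i$, and forces the $v^n$-coefficient to be nonzero at $u_0=0$. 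Letting the surviving coefficients (at powers $v^{im}$, $im<n$) be functions of $u$ and collecting the tail as $v^nb_n(u,v)/n!$ yields \eqref{eq:edgenormal2} with $b_n(0,0)\neq0$.

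\emph{Main obstacle.} The crux is this final slice analysis. Once the second coordinate has been fixed to $u^2a(u)/2+v^m/m!$, no further $v$-reparametrization is available (the only changes preserving a pure power $v^m$ are $v\mapsto\pm v$), and the single constant matrix $A$ can normalize the target frame only at the central point; thus the vanishing of the non-$m$-divisible coefficients of $g_3$ must be extracted, uniformly in $u$, purely from the $(m,n)$-type of each slice, and one must check that this is consistent with the already-normalized second coordinate. In particular the $v^m$-coefficient of $g_3$ is controlled by $A$ only at $u=0$, so care is needed to confirm that it contributes only the expected $i=1$ term (with value $0$ at the origin); verifying that the whole expansion is organized exactly by the powers $v^{im}$ up to $v^n$ for all $u$ is the delicate point, the Gram--Schmidt normalization, the two coordinate changes and the orientation bookkeeping being routine.
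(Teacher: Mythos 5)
Your treatment of \eqref{eq:edgenormal1} follows essentially the paper's route: the paper compresses the reduction to $f=p(u)+v^mQ(u,v)$ into a citation of the proof of Proposition \ref{prop:sufcond}, then applies one rotation to arrange $a_{2m}(0,0)>0$, $a_{3m}(0,0)=0$, and the rescaling $v\mapsto v\,a_{2m}(u,v)^{1/m}$. Your Gram--Schmidt choice of $A$, the substitution $(u,v)\mapsto((Af)_1(u,v),v)$ and the rescaling by $(m!\beta)^{1/m}$ are the same steps spelled out in more detail, and they are correct: since $(Af)_1$ has $v$-derivative divisible by $v^{m-1}$, the substitution changes $\partial_v$ only by a term $v^{m-1}a\,\partial_u$ as in \eqref{eq:tileta}, so the new chart is still adapted and the shape $p(u)+v^mQ(u,v)$ is preserved. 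The first assertion is fine.

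For \eqref{eq:edgenormal2}, the ``main obstacle'' you flag is not a loose end you could tidy up: it is a genuine obstruction, and the paper's own proof does not overcome it either. The $\A^n$-class of the slice $v\mapsto f(u_0,v)$ is insensitive to the coefficient $c(u_0)$ of $v^m$ in $g_3(u_0,v)$, because $(v^m/m!,\,c(u_0)v^m+\cdots)$ is carried to $(v^m/m!,\,\cdots)$ by the shear $(y,z)\mapsto(y,\,z-m!\,c(u_0)\,y)$, which is available fiberwise inside the $\A$-equivalence of each slice but is not a rotation of $\R^3$. So the slice argument kills the non-$m$-divisible coefficients strictly between $m$ and $n$ and gives $c(0)=0$, but it cannot force $c(u)\equiv0$; and the remaining freedom (a constant $A\in SO(3)$ preserving the flag $V_1\subset V_2$, together with $v\mapsto\pm v$) cannot remove a non-constant $c(u)$, since $m!\,c'(0)=\kappa_t(0)$ is the cuspidal torsion, an invariant by Proposition \ref{lem:ktindep}, and it is nonzero already for generic cuspidal edges ($m=2$, $n=3$). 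The paper disposes of this term with the single line ``By $a_{3m}(0,0)=0$, $b_0(u)=0$'', which only yields vanishing at $u=0$. What your argument honestly produces is \eqref{eq:edgenormal2} augmented by the summand $\tfrac{v^m}{m!}\,u\,b_{m1}(u)$ in the third component --- precisely the expression the paper itself reverts to in the proof of Theorem \ref{thm:ord1}, where $\kappa_t(0)=b_{m1}(0)$. You should prove that corrected form; as printed, \eqref{eq:edgenormal2} forces $\kappa_t(0)=0$ and therefore cannot hold for every $(m,n)$-type edge.
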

\begin{proof}
By the proof of Proposition  \ref{prop:sufcond},
we may assume 
\[f(u,v)=(u,u^2a_2(u)+v^{m}a_{2m}(u,v),
u^2a_3(u)+v^{m}a_{3m}(u,v)).\]
By that proof again, $(a_{2m}(0,0),a_{3m}(0,0))\ne(0,0)$.
By a rotation on $\R^3$, we may assume $a_{2m}(0,0)>0$ and $a_{3m}(0,0)=0$.
By a coordinate change
$v\mapsto va_{2m}(u,v)^{1/m}$, we may assume
$f(u,v)=(u,u^2a_2(u)+v^{m}/m!,u^2a_3(u)+v^{m}a_{3m}(u,v))$,
$(a_{3m}(0,0)=0)$.
This proves the first assertion.
If $f$ is an $(m,n)$-type edge,
then the function $a_{3m}(u,v)$ can be expanded by
$$
\sum_{i=0}^{n-1}v^ib_i(u)+v^{n}b_n(u,v).
$$
Since $f$ is an $(m,n)$-type edge,
the curve $v\mapsto f(u,v)$ is 
of $(m,n)$-type for any $u$ near $0$.
This implies that $b_i(u)=0$ 
$(i\ne km, k\geq1)$. By $a_{3m}(0,0)=0$,
$b_0(u)=0$.
This proves the assertion.
\end{proof}
Each form \eqref{eq:edgenormal1} and \eqref{eq:edgenormal2} is
called the {\it normal form\/} of an $m$-type edge and an $(m,n)$-type edge,
respectively.
Looking the first and the second components in 
\eqref{eq:edgenormal1} and \eqref{eq:edgenormal2}, 
we remark that 
the $m$-jet of the coordinate system $(u,v)$ which gives the normal form is 
uniquely determined up to $\pm$ when $m$ is even.
Let $f:(\R^2,0)\to(\R^3,0)$ be an $m$-type edge and $\eta$ a null vector field which
satisfies the condition \ref{itm:cri1}.
Then the subspace $V_1=df_0(T_0\R^2)$ and the subspace $V_2$ spanned by
$df_0(T_0\R^2)$, $\eta^mf(0)$ do not depend on the choice of $\eta$.
We assume that the representation $f=(f_1,f_2,f_3)$ of $xyz$-space $\R^3$
satisfies that
$V_1$ is the $x$-axis and $V_2$ is the $xy$-plane.
Then the coordinate system $(u,v)$ gives the normal form \eqref{eq:edgenormal1}
if and only if 
$f_1(u,v)=u$ and $(f_2)_{uv}$ is identically zero.
\subsection{Geometric invariants}
\subsubsection{Cuspidal curvatures}
Let $f$ be an $m$-type edge.
A pair of vector fields $(\xi,\eta)$ is said to be {\it adapted}
if $\xi$ is tangent to $S(f)$, and $\eta$ is a null vector field.
We take an adapted pair of vector fields $(\xi,\eta)$ such that
$\eta$ satisfies the condition \ref{itm:cri1}, and
$(\xi,\eta)$ is positively oriented. 
One can show the existence of such a pair by the definition of $m$-type edge.
We define
$$
\omega_{m,m+1}(t)
=
\frac{|\xi{f}|^{(m+1)/m}\det(\xi{f},\eta^{m}{f},\eta^{m+1}f)}
{|\xi{f}\times\eta^{m}{f}|^{(2m+1)/m}}(\mu(t))
$$
where $\mu$ is a parametrization of $S(f)$. 
We call $\omega_{m,m+1}$ the $(m,m+1)$-{\it cuspidal curvature}.
We have the following lemma:
\begin{proposition}\label{lem:notdepend}
The function $\omega_{m,m+1}$ does not depend on the choice of $(\xi,\eta)$ 
satisfying the condition \ref{itm:cri1}.
\end{proposition}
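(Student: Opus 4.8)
The plan is to reduce to an adapted coordinate system for one of the two null vector fields and then track how each of the three vectors $\xi f$, $\eta^{m}f$, $\eta^{m+1}f$ appearing in $\omega_{m,m+1}$ transforms when we pass to a competing admissible pair. By Lemma \ref{lem:adaptedex} I fix adapted coordinates $(u,v)$ with $\eta=\partial_v$ and $S(f)=\{v=0\}$, so that $f_v=v^{m-1}\psi$ (Corollary \ref{cor:div}), whence on the $u$-axis $\eta^{m}f=(m-1)!\,\psi$ and $\eta^{m+1}f=m!\,\psi_v$, and $f_{uv}=0$. Since only the restriction of $\xi$ to $S(f)$ enters the formula, I may write $\xi|_{S(f)}=\xi^{1}\partial_u$ with $\xi^{1}\neq0$, and by the description of null vector fields satisfying \ref{itm:cri1} obtained in the proof of Lemma \ref{lem:etachange} (together with Lemma \ref{lem:etamulti}) any competing null field has the form $\tilde\eta=h\,(v^{m-1}a\,\partial_u+\partial_v)$ for a nonvanishing function $h$ and some function $a$, while $\tilde\xi|_{S(f)}=\tilde\xi^{1}\partial_u$.

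The heart of the argument is to compute the effect of the shear $\partial_v\mapsto v^{m-1}a\,\partial_u+\partial_v$ on the $u$-axis. Writing $D=\partial_v+v^{m-1}a\,\partial_u$ and $Df=v^{m-1}\Psi$ with $\Psi=a f_u+\psi$, I will use $D^{m}f=(m-1)!\,\Psi$ and expand $D^{m+1}f=D^{m}(Df)=(\partial_v+v^{m-1}a\,\partial_u)^{m}(v^{m-1}\Psi)$, then show by an order-of-vanishing count that every summand containing at least one factor $v^{m-1}a\,\partial_u$ vanishes at $v=0$ once $m\geq2$; hence $D^{m+1}f=m!\,\Psi_v=m!\,a_v f_u+\eta^{m+1}f$ there. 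The key conclusion is that, on $S(f)$, the shear adds to $\eta^{m}f$ and to $\eta^{m+1}f$ only multiples of $f_u$ (hence of $\xi f$) together with a multiple of $\eta^{m}f$, and in particular the coefficient of $\eta^{m+1}f$ in $\tilde\eta^{m+1}f$ is exactly $1$. This is the step I expect to be the main obstacle: the bookkeeping that isolates the unit coefficient on $\eta^{m+1}f$ and confines all shear corrections to the span of the preceding columns (note that the coefficient $(m-1)!$ in the proof of Lemma \ref{lem:etachange} is immaterial there but here must be pinned down as $m!$).

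Granting this, the determinant $\det(\xi f,\eta^{m}f,\eta^{m+1}f)$ is insensitive to the shear, since adding to a column a linear combination of preceding columns leaves it unchanged, and likewise $\xi f\times\eta^{m}f$ is unchanged. Only the scalars survive: incorporating $\tilde\xi=\tilde\xi^{1}\partial_u$ and the factor $h$ (which multiplies the $m$-th derivative column by $h^{m}$ and the $(m+1)$-st by $h^{m+1}$ modulo lower columns), I find that $\det$ scales by $\tilde\xi^{1}h^{2m+1}$, the length $|\xi f|$ by $|\tilde\xi^{1}|$, and $|\xi f\times\eta^{m}f|$ by $|\tilde\xi^{1}|\,|h|^{m}$. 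Substituting into $\omega_{m,m+1}$, the exponents $(m+1)/m$ and $(2m+1)/m$ are exactly those for which all absolute-value factors cancel, leaving an overall sign $\sgn(\xi^{1}\tilde\xi^{1}h)$ between the two pairs. Finally I invoke the hypothesis that both $(\xi,\eta)$ and $(\tilde\xi,\tilde\eta)$ are positively oriented: on $S(f)$ the two frames are $(\xi^{1}\partial_u,\partial_v)$ and $(\tilde\xi^{1}\partial_u,h\partial_v)$, whose orientations relative to $(u,v)$ are $\sgn(\xi^{1})$ and $\sgn(\tilde\xi^{1}h)$; equality of these signs forces $\sgn(\xi^{1}\tilde\xi^{1}h)=+1$, so the two values of $\omega_{m,m+1}$ coincide.
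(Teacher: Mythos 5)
Your proof is correct and follows essentially the same route as the paper's: pass to an adapted coordinate system, normalize the competing null field to the shear form $v^{m-1}a\,\partial_u+\partial_v$ via Lemma \ref{lem:etamulti} and the argument of Lemma \ref{lem:etachange}, compute $\tilde\eta^{m}f$ and $\tilde\eta^{m+1}f$ on $S(f)$ modulo the span of the earlier columns, and cancel the scalars using the exponents $(m+1)/m$ and $(2m+1)/m$. Your explicit tracking of the residual sign $\sgn(\xi^{1}\tilde\xi^{1}h)$ and its elimination via the positive-orientation hypothesis is a point the paper's proof leaves implicit (it asserts independence under nonzero functional multiples of $\eta$, which is literally true only up to that sign), but this is a refinement of the same argument rather than a different method.
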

\begin{proof}
Since it is not appeared in the formula,
$\omega_{m,m+1}$ does not depend on the 
choice of the coordinate system.
Let $(\xi,\eta)$ be an adapted pair of vector fields
satisfying the condition \ref{itm:cri1}.
It is clear that 
the function $\omega_{m,m+1}$ does not depend on the choice
of $\xi$.
We take an adapted coordinate system $(u,v)$ satisfying
$\partial_v=\eta$.
Then 
$$\omega_{m,m+1}(u)=|f_u|^{(m+1)/m}\det(f_u,f_{v^m},f_{v^{m+1}})
|f_u\times f_{v^m}|^{-(2m+1)/m}.$$

By Corollary \ref{cor:div}, we have $f_v=v^{m-1}\psi$.
Let $\tilde\eta$ be another  null vector field satisfying
the condition \ref{itm:cri1}.
We see that
$\omega_{m,m+1}$ does not depend on 
the non-zero
functional multiples of $\eta$,
we may assume  
$\tilde\eta=a(u,v)\partial_u+\partial_v$.
By the proof of Lemma \ref{lem:etachange},
we may assume that $\tilde\eta$ is
\begin{equation}\label{eq:eta}
\tilde\eta=v^{m-1}a(u,v)\partial_u+\partial_v.
\end{equation}
Then by $f_v=v^{m-1}\psi$,
$$
\tilde\eta f=v^{m-1}(af_u+\psi).
$$
Thus 
\begin{equation}\label{eq:etam}
\tilde\eta^m f=(m-1)!(af_u+\psi)+(m-1)(m-1)!v\eta(af_u+\psi)+v^2g(u,v),
\end{equation}
where $g$ is a function, and
$$
\tilde\eta^{m+1} f=(m-1)!\eta(af_u+\psi)+(m-1)(m-1)!\eta v\eta(af_u+\psi)
=m!(\eta a f_u+a \eta f_u+\eta\psi)
$$
hold on the $u$-axis.
Since $\psi=((m-1)!)^{-1}f_{v^m}$ and 
$\psi_v=(m!)^{-1}f_{v^{m+1}}$, we have
\begin{align*}
\frac{|\xi{f}|^{(m+1)/m}\det(\xi{f},\eta^{m}{f},\eta^{m+1}f)}
{|\xi{f}\times\eta^{m}{f}|^{(2m+1)/m}((m-1)!)^{1/m}}(u,0)
=&
\frac{|f_u|^{(m+1)/m}\det(f_u,\psi,\psi_v)}
{|f_u\times \psi|^{(2m+1)/m}}(u,0)\\
=&
\frac{|f_u|^{(m+1)/m}\det(f_u,f_{v^m},f_{v^{m+1}})}
{|f_u\times f_{v^m}|^{(2m+1)/m}}(u,0).
\end{align*}
This shows the assertion.
\end{proof}
We have the following
proposition.
\begin{proposition}\label{prop:type:front}
Let $f\colon(\R^2,0)\to(\R^3,0)$ be an $m$-type edge. 
Then $f$ at $0$ is an $(m,m+1)$-type edge if and only if 
$\omega_{m,m+1}\neq0$ at $0$. 
\end{proposition}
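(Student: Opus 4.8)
The plan is to read off the non-vanishing of $\omega_{m,m+1}$ at $0$ directly from condition \ref{itm:cri4} with $n=m+1$, and then to invoke Proposition \ref{prop:sufcond} for one implication and the normal form \eqref{eq:edgenormal2} for the other. The first thing I would record is the key reduction. Since $f$ is an $m$-type edge, it is a frontal with $\rank df_0=1$ whose singular set $S(f)$ is a regular curve transverse to $\Ker df_0$, so the hypotheses of Proposition \ref{prop:sufcond} are met and we may choose an adapted pair $(\xi,\eta)$ with $\eta$ satisfying \ref{itm:cri1}; then \ref{itm:cri2} holds, whence $\xi f\neq\zv$ and $\xi f\times\eta^{m}f\neq\zv$ at $0$. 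Consequently the scalar factors $|\xi f|^{(m+1)/m}$ and $|\xi f\times\eta^{m}f|^{(2m+1)/m}$ in the definition of $\omega_{m,m+1}$ are strictly positive at $0$, so that
$$
\omega_{m,m+1}(0)\neq0
\iff
\det(\xi f,\eta^{m}f,\eta^{m+1}f)(0)\neq0
\iff
\rank(\xi f,\eta^{m}f,\eta^{m+1}f)(0)=3,
$$
and the last condition is exactly \ref{itm:cri4} taken with $n=m+1$.

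For the implication $\omega_{m,m+1}\neq0\Rightarrow f$ is an $(m,m+1)$-type edge, I would observe that \ref{itm:cri1} and \ref{itm:cri2} already hold because $f$ is an $m$-type edge, that \ref{itm:cri3} is vacuous for $n=m+1$ since there is no integer $i$ with $m<i<m+1$, and that \ref{itm:cri4} holds by the reduction above. Proposition \ref{prop:sufcond} then yields at once that $f$ is an $(m,m+1)$-type edge.

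For the converse I would compute $\omega_{m,m+1}(0)$ in the normal form. Because $\omega_{m,m+1}$ is unchanged by the admissible choice of $(\xi,\eta)$ and by positive coordinate changes on the source (Proposition \ref{lem:notdepend}), and because the determinant, the norm and the cross product are all invariant under special orthonormal matrices on the target, I may evaluate it on the representative \eqref{eq:edgenormal2} with $n=m+1$; there the sum is empty and $b_{m+1}(0,0)\neq0$. Taking $\xi=\partial_u$ and $\eta=\partial_v$, which satisfies \ref{itm:cri1}, a direct differentiation at the origin gives $\xi f(0)=(1,0,0)$, $\eta^{m}f(0)=(0,1,0)$ and $\eta^{m+1}f(0)=(0,0,b_{m+1}(0,0))$, so that $\omega_{m,m+1}(0)=b_{m+1}(0,0)\neq0$.

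The only points needing care, rather than real difficulty, are the bookkeeping of the orders of vanishing in $v$: one must check that $\partial_v^{m}$ and $\partial_v^{m+1}$ extract precisely the coefficients indicated above, that the lower-order derivatives $f_{v^{i}}$ $(2\le i\le m-1)$ vanish on $S(f)$, and that the normalizing factors are nonzero. The main conceptual step — the equivalence of $\omega_{m,m+1}(0)\neq0$ with \ref{itm:cri4} — is immediate once these factors are known not to vanish, so I expect no genuine obstacle beyond this routine verification.
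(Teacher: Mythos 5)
Your proof is correct, and it takes a somewhat different route from the paper's. The paper works entirely inside the $m$-type normal form \eqref{eq:edgenormal1}: writing $b_m(u,v)=c_1(u)+vc_2(u,v)$, it identifies $\omega_{m,m+1}$ with a non-zero functional multiple of $c_2(u,0)$ and reads off both implications from the single observation that the third component of \eqref{eq:edgenormal1} has a non-degenerate $v^{m+1}$-term exactly when $c_2(0,0)\ne0$. You instead split the equivalence: for sufficiency you observe that the positivity of the scalar factors makes $\omega_{m,m+1}(0)\ne0$ literally equivalent to condition \ref{itm:cri4} with $n=m+1$ (with \ref{itm:cri3} vacuous), and then invoke Proposition \ref{prop:sufcond}; for necessity you evaluate $\omega_{m,m+1}(0)=b_{m+1}(0,0)$ on the $(m,m+1)$-normal form \eqref{eq:edgenormal2}. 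Both arguments rest on the same normal-form machinery and the same well-definedness statement (Proposition \ref{lem:notdepend}); the paper's version is shorter, while yours makes explicit the role of \ref{itm:cri4} and spells out the step ``$c_2(0,0)\ne0\Rightarrow f$ is an $(m,m+1)$-type edge'' that the paper leaves implicit. The only point worth recording explicitly is that applying Proposition \ref{prop:sufcond} with $n=m+1$ uses the standing hypothesis $n<2m$, i.e.\ $m\ge2$, which holds since an $m$-type edge is singular.
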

\begin{proof}
Since $f$ is an $m$-type edge, by Proposition \ref{prop:edgenormal},
we may assume that $f$ is given by the right-hand side of 
\eqref{eq:edgenormal1}.
Since $b_m(0,0)=0$, there exist $c_1(u)$ and $c_2(u,v)$ such that
$b_m(u,v)=c_1(u)+vc_2(u,v)$.
Since we can take $\eta=\partial_v$, the function
$\omega_{m,m+1}$ is a non-zero functional multiple of
$c_2(u,0)$.
Then we see the assertion.
\end{proof}
It is easy to show that $(m,m+1)$-type edges are fronts and
that an $m$-type edge is a front if and only if
$\omega_{m,m+1}\ne0$.
In Appendix \ref{sec:genbias},
we define $(m,n)$-cuspidal curvature for a curve germ of $(m,n)$-type,
denoting it by $r_{m,n}$. 
An intersection curve of $(m,m+1)$-type edge $f$ with
$T$ as in Lemma \ref{lem:edgem},
is a curve-germ of $(m,m+1)$-type.
The following holds.
\begin{corollary}
Let $f\colon(\R^2,0)\to(\R^3,0)$ be a $\sigma$-edge, 
where $\sigma$ is $\mathcal{A}$-equivalent to $v\mapsto(v^m,v^{m+1})$. 
Then the $(m,m+1)$-cuspidal curvature $\omega_{m,m+1}$ at $0$ coincides with 
the $(m,m+1)$-cuspidal curvature $r_{m,m+1}$ of the intersection curve $\rho$ 
of $f$ with a plane $P$ which is perpendicular to the tangent line to $f$ at $0$.
\end{corollary}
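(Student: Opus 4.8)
The plan is to reduce everything to the normal form of Proposition~\ref{prop:edgenormal} and then observe that both invariants are read off from the same leading coefficient. Both $\omega_{m,m+1}$ and $r_{m,m+1}$ are invariant under orientation-preserving isometries of the target (the former by the discussion preceding Proposition~\ref{lem:notdepend}, the latter by its definition in Appendix~\ref{sec:genbias}), and the plane $P$ perpendicular to the tangent line is an isometric notion; hence I may replace $f$ by $Af$ for a suitable special orthonormal matrix $A$ and choose the positive coordinate system $(u,v)$ realizing \eqref{eq:edgenormal2}. For an $(m,m+1)$-type edge one has $n=m+1$, so $\lfloor n/m\rfloor=1$ and the middle sum in \eqref{eq:edgenormal2} is empty, leaving
\[
f(u,v)=\left(u,\ \frac{u^2a(u)}{2}+\frac{v^m}{m!},\ \frac{u^2b_0(u)}{2}+\frac{v^{m+1}b_{m+1}(u,v)}{(m+1)!}\right),\qquad b_{m+1}(0,0)\neq0.
\]

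First I would pin down $P$. In this normal form the tangent line $V_1=df_0(T_0\R^2)$ is the $x$-axis, so the plane through $0$ perpendicular to it is $P=\{x=0\}$, the $yz$-plane. Since the first component of $f$ equals $u$, we have $f(u,v)\in P$ exactly when $u=0$, so the intersection curve is $\rho(v)=f(0,v)$; identifying $P$ with $\R^2$ through the $yz$-coordinates yields the plane curve $\bar\rho(v)=\left(v^m/m!,\ v^{m+1}b_{m+1}(0,v)/(m+1)!\right)$. Because $f(S(f))$ is tangent to the $x$-axis, $P$ is transversal to $f(S(f))$ at $0$, so Lemma~\ref{lem:edgem} applies and guarantees that $\bar\rho$ is a curve-germ of $(m,m+1)$-type, which is exactly what is needed for $r_{m,m+1}$ to be defined.

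Next I would compute the two invariants directly. Taking $\xi=\partial_u$ and $\eta=\partial_v$, at the origin one reads off $\xi f=(1,0,0)$, $\eta^m f=f_{v^m}=(0,1,0)$, and $\eta^{m+1}f=f_{v^{m+1}}=(0,0,b_{m+1}(0,0))$, so $|\xi f|=1$, $|\xi f\times\eta^m f|=1$, $\det(\xi f,\eta^m f,\eta^{m+1}f)=b_{m+1}(0,0)$, giving $\omega_{m,m+1}(0)=b_{m+1}(0,0)$. On the curve side $\bar\rho^{(m)}(0)=(1,0)$ and $\bar\rho^{(m+1)}(0)=(0,b_{m+1}(0,0))$, so the $(m,m+1)$-cuspidal curvature $r_{m,m+1}$ of $\bar\rho$ — whose defining ratio is the curve analogue of $\omega_{m,m+1}$, with the $2\times2$ determinant $\det(\bar\rho^{(m)},\bar\rho^{(m+1)})$ in place of the $3\times3$ one and the matching powers of $|\bar\rho^{(m)}|$ — also evaluates to $b_{m+1}(0,0)$. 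Comparing the two values yields $\omega_{m,m+1}(0)=r_{m,m+1}$, which is the assertion.

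The two determinant evaluations are routine; the delicate point is the bookkeeping of normalization and sign. The weights $|\xi f|^{(m+1)/m}$ and $|\xi f\times\eta^m f|^{(2m+1)/m}$ are present in $\omega_{m,m+1}$ precisely so that the factorial- and rescaling-dependent factors cancel and leave $b_{m+1}(0,0)$, and the definition of $r_{m,n}$ is tuned with the corresponding powers of $|\bar\rho^{(m)}|$ for the same reason. The main obstacle is therefore to confirm that the reparametrization $t=v/(m!)^{1/m}$ bringing $\bar\rho$ into the strict form $(t^m,t^{m+1}\tilde b(t))$ of Lemma~\ref{lem:curvenormal}, together with the prescribed positive orientation of $(\xi,\eta)$ and of $P\subset\R^3$, introduces no stray constant or sign. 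This reduces to the homogeneity of the two defining ratios, which makes each invariant manifestly independent of positively oriented reparametrizations and of scale, so that no discrepancy can survive.
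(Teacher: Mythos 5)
Your proposal is correct and follows essentially the same route as the paper: reduce to the normal form of Proposition \ref{prop:edgenormal}, identify $P$ as the $yz$-plane so that the intersection curve is $\rho(v)=f(0,v)$, and observe that both invariants are read off as $b_{m+1}(0,0)$ (the paper delegates the curve-side evaluation to Example \ref{ex:curvefront}, while you compute both sides explicitly). The only difference is cosmetic — your explicit evaluation of the two defining ratios replaces the paper's citation of Example \ref{ex:curvefront} — and your closing remarks on reparametrization and normalization address exactly the point the paper's one-line conclusion glosses over.
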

\begin{proof}
By the assumption, we may assume that $f$ is given by the normal form
\eqref{eq:edgenormal1}. 
Since $f_u(0,0)=(1,0,0)$ and $f_v(0,0)=(0,0,0)$, the plane $P$ is given by $P=\{(0,y,z)\in\R^3\ |\ y,z\in\R\}$. 
Thus the intersection curve $\rho$ can be parametrized by 
$$\rho(v)=f(0,v)=\left(0,\frac{v^m}{m!},\frac{b_{m+1}(0,v)v^{m+1}}{(m+1)!}\right).$$
This can be considered as a normal form of a curve which is 
$\mathcal{A}^{m+1}$-equivalent to $v\to(v^m,v^{m+1})$. 
Hence we have the assertion by Example \ref{ex:curvefront}.
\end{proof}

Let $f$ be an $m$-type edge.
We assume $\omega_{m,m+1}$ is identically zero on $S(f)$.
Let $\mu(t)$ be a parametrization of $S(f)$.
We define
$$
\omega_{m,m+2}(t)=\dfrac{|\xi f|^{(m+2)/m}\det(\xi f,\eta^m f,\eta^{m+2}f)}
{|\xi f\times\eta^m f|^{(2m+2)/m}}(\mu(t)).
$$
We will see this does not depend on the choice of $(\xi,\eta)$ which
satisfies the conditions \ref{itm:cri1} and \ref{itm:cri2}
in Proposition \ref{prop:sufcond}
and
$\det(\xi{f},\eta^{m}f,\eta^{j}f)=0$ 
$(j<m+2)$.
Inductively, we define 
$\omega_{m,m+i}$ when $\omega_{m,m+j}=0$ $(j\leq i-1)$ by
$$
\omega_{m,m+i}=\dfrac{|\xi f|^{(m+i)/m}\det(\xi f,\eta^m f,\eta^{m+i}f)}
{|\xi f\times\eta^m f|^{(2m+i)/m}}(\mu(t)).
$$
We will also see this does not depend on the choice of $(\xi,\eta)$ 
satisfying the conditions \ref{itm:cri1} and \ref{itm:cri2}
in Proposition \ref{prop:sufcond}
and
$\det(\xi{f},\eta^{m}f,\eta^{j}f)=0$ 
$(j<m+i)$.
If $i=m$, 
we set $\beta_{m,2m}=\omega_{m,2m}$.
\begin{proposition}\label{lem:cuspialbias}
Under the assumption $\omega_{m,m+1}=\cdots=\omega_{m,m+i-1}=0$,
the function $\omega_{m,m+i}$ $(i=1,\ldots,m-1)$
does not depend on the choice of the pair $(\xi,\eta)$ which
satisfies the conditions \ref{itm:cri1} and \ref{itm:cri2}
in Proposition \ref{prop:sufcond}
and
$\det(\xi{f},\eta^{m}f,\eta^{m+j}f)=0$ 
$(1\leq j< i)$.
\end{proposition}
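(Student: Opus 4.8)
The plan is to follow the template of Proposition \ref{lem:notdepend} (the case $i=1$), reducing successively to the single substantive comparison and isolating the top index $i=m$ as the essential difficulty. Since no coordinate of the source appears in the defining quotient, the value is independent of the source coordinates; and replacing $\xi$ by a nonzero positive functional multiple leaves the quotient unchanged, because the numerator is linear in $\xi f$ while the factors $|\xi f|^{(m+i)/m}$ and $|\xi f\times\eta^m f|^{(2m+i)/m}$ carry exactly the compensating homogeneity (under $\xi f\mapsto\lambda\xi f$ the numerator scales by $\lambda^{(2m+i)/m}$, matching the denominator), the positivity of the orientation removing the sign ambiguity. I would next treat $\eta\mapsto h\eta$ with $h>0$: expanding $(h\eta)^{m+i}f$ on $S(f)$ yields $h^{m+i}\eta^{m+i}f$ plus a combination of $\eta^{m}f,\dots,\eta^{m+i-1}f$, since the terms $\eta^jf$ with $j\le m-1$ vanish on $S(f)$ by \ref{itm:cri1} and $\eta f|_{S(f)}=0$; as $(h\eta)^m f=h^m\eta^m f$ on $S(f)$, the lower terms drop out of $\det(\xi f,\eta^m f,\cdot)$ precisely by the standing hypotheses $\det(\xi f,\eta^m f,\eta^{m+j}f)=0$ $(1\le j<i)$, and the powers of $h$ cancel against the homogeneity. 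This reduces the proposition to comparing two null vector fields whose $\partial_v$-component equals $1$.

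For that comparison I would fix, via Lemma \ref{lem:adaptedex}, an adapted coordinate system with $\eta=\partial_v$ and $\xi=\partial_u$, so that $f_v=v^{m-1}\psi$ by Corollary \ref{cor:div}, and use the normalization from the proof of Lemma \ref{lem:etachange} (equation \eqref{eq:tileta}) to bring the second null field to the form $\tilde\eta=\partial_v+D$ with $D=v^{m-1}a(u,v)\partial_u$. The goal is then to establish, on the $u$-axis,
\[
\tilde\eta^{m}f\equiv f_{v^m}\quad\text{and}\quad \tilde\eta^{m+i}f\equiv f_{v^{m+i}}
\]
modulo the subspace spanned by $f_u,f_{v^m},\dots,f_{v^{m+i-1}}$. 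Granting this, $\det(\xi f,\tilde\eta^m f,\tilde\eta^{m+i}f)=\det(f_u,f_{v^m},f_{v^{m+i}})$ on $S(f)$, the first two spanning vectors dropping because columns repeat and the remaining $f_{v^{m+j}}$ $(1\le j<i)$ dropping by the hypotheses $\det(\xi f,\eta^m f,\eta^{m+j}f)=0$.

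The mechanism making this work is that every factor $D$ carries a $v^{m-1}$, so in the expansion of $(\partial_v+D)^{m+i}f$ a word containing $r$ copies of $D$ survives on $\{v=0\}$ only if the remaining $m+i-r$ copies of $\partial_v$ can absorb the total power $r(m-1)$, that is, only if $r\le (m+i)/m$. Hence for $1\le i\le m-1$ only $r\le1$ contributes: the $r=0$ word gives $f_{v^{m+i}}$, while a single $D$ produces only terms proportional to mixed derivatives $f_{uv^{s}}$ with $0\le s\le i\le m-1$, all of which either vanish on the $u$-axis (differentiating $f_{v^s}(u,0)\equiv0$ gives $f_{uv^s}(u,0)=0$ for $1\le s\le m-1$) or, for $s=0$, are multiples of $f_u$. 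This settles the congruence, and hence the proposition, for $i<m$ without further input.

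The main obstacle is the top index $i=m$. There, words with \emph{two} factors of $D$ survive, contributing second directional derivatives of type $f_{uu}$, and the single-$D$ words now reach $s=m$, contributing $f_{uv^m}$; neither is controlled by \ref{itm:cri1} alone, since on $S(f)$ one has $\inner{\nu}{f_{uv^m}}=-\inner{\nu_u}{f_{v^m}}$, which need not vanish. To finish one must use the vanishing hypotheses in their strong, vectorial form: $\det(\xi f,\eta^m f,\eta^{m+j}f)=0$ for $1\le j\le m-1$ forces $f_{v^{m+1}}(u,0)=\dots=f_{v^{2m-1}}(u,0)=0$, and differentiating these identities along $S(f)$ annihilates the corresponding $f_{uv^{m+j}}$ and $f_{uuv^{m+j}}$. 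The delicate point — and where I expect essentially all the work to lie — is to compute the exact coefficients of $f_{uv^m}$ and of the $f_{uu}$-type term in the binomial-type expansion of $(\partial_v+v^{m-1}a\partial_u)^{2m}f$ and to show, using precisely these relations, that their contributions drop out of $\det(f_u,f_{v^m},\cdot)$, so that $\tilde\eta^{2m}f$ is again congruent to a multiple of $f_{v^{2m}}$ modulo the span of $f_u$ and $f_{v^m}$. This combinatorial cancellation at the boundary case $i=m$ is the crux of the argument.
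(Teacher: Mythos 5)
For $i\le m-1$ your argument is correct and follows essentially the same route as the paper: pass to an adapted coordinate system, normalize the second null field to $\tilde\eta=\partial_v+v^{m-1}a\,\partial_u$ as in \eqref{eq:tileta}, and count powers of $v$ in the expansion of $\tilde\eta^{m+i}f$. The paper channels the vanishing hypotheses through the auxiliary fact $\psi_{v^{j}}\in\langle f_u,\psi\rangle_{\R}$ on $S(f)$ (its Lemma \ref{lem:psiv}), whereas you correctly isolate that for $i<m$ those hypotheses are needed only in the reduction $\eta\mapsto h\eta$; the final comparison of the two normalized fields then goes through unconditionally. That part is fine.

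The gap is the top case $i=m$, which you flag but do not close, and the route you sketch for it cannot work. First, the hypothesis $\det(\xi f,\eta^{m}f,\eta^{m+j}f)=0$ does \emph{not} force $f_{v^{m+j}}(u,0)=0$; it only forces $f_{v^{m+j}}(u,0)=A_j(u)f_u+B_j(u)f_{v^m}$, and differentiating this membership along $S(f)$ reproduces $A_jf_{uu}+B_jf_{uv^m}$, i.e.\ exactly the uncontrolled terms you were trying to kill. Second, and decisively, the extra contribution genuinely survives: at $i=m$ the only new word is $\partial_v^{m-1}D$ acting on $af_u+\psi$ inside $\tilde\eta^{2m}f$, and a direct computation on $S(f)$ gives
\[
\det\bigl(f_u,\tilde\eta^{m}f,\tilde\eta^{2m}f\bigr)-\det\bigl(f_u,f_{v^m},f_{v^{2m}}\bigr)
=\binom{2m-1}{m-1}(m-1)!^{3}\,\Bigl(2a\det(f_u,\psi,\psi_u)+a^{2}\det(f_u,\psi,f_{uu})\Bigr),
\]
where $f_v=v^{m-1}\psi$. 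The two determinants on the right are governed by the cuspidal torsion and the normal curvature, neither of which is constrained by $\omega_{m,m+1}=\dots=\omega_{m,2m-1}=0$. Concretely, for $f(u,v)=(u,v^2/2,u^2/2)$ and $\tilde\eta=va\,\partial_u+\partial_v$ one checks that $\omega_{2,3}=0$ and $\det(f_u,\tilde\eta^2f,\tilde\eta^3f)=0$ on $\{v=0\}$, yet $\det(f_u,\tilde\eta^2f,\tilde\eta^4f)=3a^2$ while $\det(f_u,f_{v^2},f_{v^4})=0$. So the ``combinatorial cancellation'' you defer to is not available from the stated hypotheses; this is also precisely the point where the paper's own proof is thinnest, since its replacement of $\tilde\eta^{i}(af_u+\psi)$ by $(af_u+\psi)_{v^i}$ modulo terms vanishing on $S(f)$ is valid for $i\le m-1$ but silently discards the $\partial_u$-contribution that first appears at $i=m$. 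Closing the case $i=m$ requires either further restricting the admissible null fields or additional hypotheses on $\kappa_t$ and $\kappa_\nu$; it cannot be completed along the lines you propose.
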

\begin{proof}
We already showed the case $i=1$ in Proposition \ref{lem:notdepend}. 
Let $(\xi,\eta)$ be a pair of vector fields satisfying
the assumption of lemma.
We take an adapted coordinate system $(u,v)$
such that $\partial_v=\eta$.
By the proof of Lemma \ref{lem:etachange},
we see
$f_v=v^{m-1}\psi$.

Moreover, we have:
\begin{lemma}\label{lem:psiv}
There exist functions $\alpha,\beta$,
and a vector valued function $\theta$ such that
\begin{equation}\label{eq:psiv}
\psi_{v}=\alpha f_u+\beta\psi+v^{i-1}\theta.
\end{equation}
\end{lemma}
\begin{proof}
Since $f_{v^{m+1}}=(m-1)!\psi_v$ on the $u$-axis,
$\omega_{m,m+1}=0$ implies that
there exists $\alpha_1,\beta_1,\theta_1$ such that
$\psi_{v}=\alpha_1f_u+\beta_1\psi+v\theta_1$.
We assume that there exist 
$\alpha_k,\beta_k,\theta_k$ such that
$\psi_{v}=\alpha_kf_u+\beta_k\psi+v^k\theta_k$
$(k=1,\ldots,i-2)$.
Differentiating this equation,
we have
$$
\psi_{v^{k+1}}
=
\sum_{l=0}^k{}\pmt{k\\l}
\Big((\alpha_k)_{v^l}f_{uv^{k-l}}
+
(\beta_k)_{v^l}\psi_{v^{k-l}}
+
(v^k)_{v^l}(\theta_k)_{v^{k-l}}\Big)
$$
Thus by $f_{uv}=\cdots=f_{uv^{m-1}}=0$ holds, and
$\psi_{v^{j}}\in \left\langle f_u,\psi\right\rangle_{\R}$ $(j\leq k)$
on the $u$-axis, we have
$
2=\rank(f_u,\psi,\psi_{v^{k+1}})=
\rank(f_u,\psi,\theta_k)$ on the $u$-axis.
Hence 
there exist functions $\alpha_{k+1},\beta_{k+1}$,
and a vector valued function $\theta_{k+1}$ such that
$\theta_k=\alpha_{k+1}f_u+\beta_{k+1}\psi+v\theta_{k+1}$.
This shows the assertion.
\end{proof}
We continue the proof of Proposition \ref{lem:cuspialbias}.
Since the assertion holds by multiplying
the null vector field by a non-zero function, we take 
a null vector field $\eta$ as in the right-hand side of \eqref{eq:eta}.
By the same calculations in the proof of 
Proposition \ref{lem:notdepend},
we have $\eta f=v^{m-1}(af_u+\psi)$. Thus
$$
\eta^{m+i}f
=
\sum_{k=0}^{m+i-1}
\pmt{m+i-1\\k}\eta^k v^{m-1}\eta^{m+i-1-k}(af_u+\psi).
$$
Since $\eta^k v^{m-1}=0$ if $k\ne m-1$ and $\eta^k v^{m-1}=(m-1)!$,
$$
\eta^{m+i}f
=
\pmt{m+i-1\\m-1}(m-1)!\eta^i(af_u+\psi).
$$
Thus
$
\eta^{m+i}f
=vg(u,v)+(af_u+\psi)_{v^i}$,
where $g$ is a function.
By $f_{uv}=\ldots=f_{uv^{m-1}}=0$ holds, and
$\psi_{v^{j}}\in \left\langle f_u,\psi\right\rangle_{\R}$ $(j\leq k)$
on the $u$-axis by Lemma \ref{lem:psiv}, we have
\begin{align*}
\dfrac{|\xi f|^{(m+i)/m}\det(\xi f,\eta^m f,\eta^{m+i}f)}
{|\xi f\times\eta^m f|^{(2m+i)/m}}(u,0)
=&
\dfrac{|f_u|^{(m+i)/m}\det(f_u,f_{v^m},f_{v^{m+i}})}
{|f_u\times f_{v^m}|^{(2m+i)/m}}(u,0),
\end{align*}
and this shows the assertion.
\end{proof}
We call $\omega_{m,m+i}$ the $(m,m+i)$-{\it cuspidal curvature}, and
$\beta_{m,2m}$ the $(m,2m)$-{\it bias}.
Note that $\beta_{m,2m}$ does not depend on the choice 
of $(\xi,\eta)$ satisfying \ref{itm:cri1}, \ref{itm:cri2} and 
$\inner{\xi{f}}{\eta^{m}{f}}=0$ at $p$ by the same calculation. 
In this case, $a(0,0)=0$ by the additional assumption.
If $f$ is an $m$-type edge,
and written as \eqref{eq:edgenormal1},
then 
$\omega_{m,m+1}(0)=(m+1)(b_m)_v(0,0)$.
If $f$ is an $(m,n)$-edge $(n<2m)$, 
and written as \eqref{eq:edgenormal2},
then 
$\omega_{m,n}(0)=b_n(0,0)$,
and $\beta_{m,2m}(0,0)=b_{2m}(0)$.
See Appendix \ref{sec:genbias} for geometric meanings of the terms $b_{im}$ 
$(i=2,\ldots,{\lfloor n/m\rfloor})$.
\subsubsection{Singular, normal curvatures and cuspidal torsion}
Let $f$ be an $m$-type edge, and $\mu(t)$ be a parametrization of
the singular set.
Let $\nu$ be a unit normal vector field of $f$, and we set
$\lambda=\det(f_u,f_v,\nu)$ for an oriented coordinate system $(u,v)$ on $(\R^2,0)$.
We set $\hat\mu=f\circ\mu$.
Then we define
\begin{equation}\label{eq:kskn}
\kappa_s(t)=\sgn\Big(\delta\ \eta^{m-1}\lambda(\mu(t))\Big)
\dfrac{\det(\hat\mu',\hat\mu'',\nu(\mu))}
{|\hat\mu'|^3},\quad
\kappa_\nu(t)=\dfrac{\inner{\hat\mu''}{\nu(\mu)}}
{|\hat\mu'|^2}
\end{equation}
and
\begin{equation}\label{eq:kt}
\kappa_t(t)
=
\dfrac{\det(\xi f,\eta^m f,\xi\eta^m f)}
{|\xi f\times\eta^m f|^2}(\mu(t))
-
\dfrac{\det(\xi f,\eta^m f,\xi^2 f)\inner{\xi f}{\eta^m f}}
{|\xi f|^2|\xi f\times\eta^m f|^2}(\mu(t)),
\end{equation}
where $\delta=1$ if $(\mu',\eta)$ agrees the orientation
of the coordinate system, and
$\delta=-1$ if $(\mu',\eta)$ does not agree the orientation.
We call $\kappa_s$, $\kappa_\nu$ and $\kappa_t$
{\it singular curvature, normal curvature} and {\it cuspidal torsion},
respectively.
These definitions are direct analogies of \cite{front,martins-saji}.
It is easy to see that the definitions \eqref{eq:kskn} do not
depend on the choice of parametrization of the singular curve.
Moreover, $\kappa_s$ does not depend on the choice of $\nu$,
nor the choice of $\eta$ when $m$ is even.
To see the well-definedness of $\kappa_t$, we need:
\begin{proposition}\label{lem:ktindep}
The definition \eqref{eq:kt} does not depend on the choice
of the adapted vector fields $(\xi,\eta)$ with $\eta$
which satisfies \ref{itm:cri1}.
\end{proposition}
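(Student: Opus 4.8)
The plan is to use that the right-hand side of \eqref{eq:kt} is written entirely in terms of the fields $\xi,\eta$ applied to $f$, hence is manifestly independent of the coordinate system. So by Lemma \ref{lem:adaptedex} I would fix an adapted coordinate system $(u,v)$ with $\eta=\partial_v$ and $\xi=\partial_u$ along $S(f)=\{v=0\}$, reducing $\kappa_t$ on the $u$-axis to
\[
\kappa_t=\frac{\det(f_u,f_{v^m},f_{uv^m})}{|f_u\times f_{v^m}|^2}
-\frac{\det(f_u,f_{v^m},f_{uu})\inner{f_u}{f_{v^m}}}{|f_u|^2\,|f_u\times f_{v^m}|^2}.
\]
It then suffices to verify invariance under the three elementary modifications of $(\xi,\eta)$: a nonzero functional rescaling of $\xi$, a change of the extension of $\xi$ off $S(f)$, and the passage to another null field $\tilde\eta$ satisfying \ref{itm:cri1} (the functional rescaling of $\eta$ being a fourth, easy, move).

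First I would dispose of the dependence on $\xi$. Both $\xi\eta^mf$ and $\xi^2f$ are directional derivatives in a direction tangent to $S(f)$, so along $S(f)$ they depend only on $\xi|_{S(f)}$; this removes any dependence on the extension of $\xi$. For a rescaling $\xi\mapsto h\xi$ one has on $S(f)$ that $(h\xi)^2f=h^2\xi^2f+h(\xi h)\xi f$, and the spurious summand $h(\xi h)\xi f$ is a multiple of $\xi f$, hence dies inside every determinant $\det(\xi f,\dots)$; tracking the surviving powers of $h$ shows both terms are unchanged. The same homogeneity bookkeeping, together with Lemma \ref{lem:etamulti} and the vanishing of $\eta^jf$ and (since $\xi$ is tangent to $S(f)$) of $\xi\eta^jf$ on $S(f)$ for $1\le j\le m-1$, settles a rescaling $\eta\mapsto h\eta$: the factor $h^m$ from $\eta^mf$ cancels between numerator and denominator, while the term $\xi(h^m)\eta^mf$ produced on differentiating is parallel to $\eta^mf$ and is killed in the determinant.

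The core of the argument is the passage to a second null field $\tilde\eta$ satisfying \ref{itm:cri1}. By the proof of Lemma \ref{lem:etachange} I may assume $\tilde\eta=v^{m-1}a(u,v)\partial_u+\partial_v$, and then, using $f_v=v^{m-1}\psi$ from Corollary \ref{cor:div} together with \eqref{eq:etam}, one obtains on the $u$-axis
\[
\tilde\eta^mf=f_{v^m}+c\,f_u,\qquad c=(m-1)!\,a(u,0),
\]
so $\tilde\eta^mf$ differs from $\eta^mf$ only by a multiple of $\xi f=f_u$. Differentiating along $\xi=\partial_u$ gives $\xi\tilde\eta^mf=f_{uv^m}+c'f_u+c\,f_{uu}$. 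Substituting into \eqref{eq:kt}: the factor $|f_u\times\tilde\eta^mf|^2=|f_u\times f_{v^m}|^2$ is unaffected, the $f_u$-components drop from every determinant by parallelism, and the first summand acquires the extra term $c\,\det(f_u,f_{v^m},f_{uu})/|f_u\times f_{v^m}|^2$, while $\inner{f_u}{\tilde\eta^mf}=\inner{f_u}{f_{v^m}}+c|f_u|^2$ feeds an identical extra term into the second summand. Since $\kappa_t$ is the difference of the two summands, these corrections cancel and $\kappa_t$ returns to its value for $\eta=\partial_v$.

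The main obstacle is exactly this final cancellation, which is the raison d'être of the second, otherwise redundant, term in \eqref{eq:kt}: one must check that the shift $\tilde\eta^mf=\eta^mf+c\,\xi f$ injects the \emph{same} quantity $c\,\det(\xi f,\eta^mf,\xi^2f)/|\xi f\times\eta^mf|^2$ into both summands, so that the difference annihilates it rather than merely simplifying it. Everything else reduces to the homogeneity bookkeeping and the parallel-vector identities already exploited in Proposition \ref{lem:notdepend}.
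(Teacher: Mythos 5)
Your proposal is correct and follows essentially the same route as the paper: reduce to an adapted coordinate system, use Lemma \ref{lem:etachange} to write the second null field as $v^{m-1}a\,\partial_u+\partial_v$, compute $\tilde\eta^m f$ and $\xi\tilde\eta^m f$ on $S(f)$ via \eqref{eq:etam}, and observe that the $a$-dependent contributions to the two summands of \eqref{eq:kt} cancel. The paper leaves that cancellation (and the easy $\xi$- and rescaling-invariances) implicit in the substitution, whereas you spell them out, but there is no substantive difference.
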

\begin{proof}
One can easily to check it does not depend on the choice of
functional multiplications of $\eta$.
Since the assertion does not depend on the choice of
local coordinate system, one can choose an adapted coordinate system $(u,v)$
with $\partial_v$ satisfying \ref{itm:cri1}.
Let $\eta$ be a null vector field which satisfies \ref{itm:cri1}.
Then by the proof of Lemma \ref{lem:etachange},
we may assume $\eta$ is given by \eqref{eq:tileta}.
Then by \eqref{eq:etam}, we see $\eta^m f=(m-1)!(af_u+\psi)$ on the $u$-axis,
where $\psi$ is given in the proof of Lemma \ref{lem:etachange}.
Furthermore, by \eqref{eq:etam}, we see
and $\xi\eta^m f=(m-1)!(a_uf_u+af_{u^2}+\psi_u)$ on the $u$-axis.
Substituting these formulas into the right-hand side of \eqref{eq:kt},
we see it is
$$
\dfrac{\det(f_u,\psi,\psi_u)}
{|f_u\times \psi|^2}(u,0)
-
\dfrac{\det(f_u,\psi,f_{u^2})\inner{f_u}{\psi}}
{|f_u|^2|f_u\times \psi|^2}(u,0),
$$
and by $f_{v^m}=(m-1)!\psi$, this shows the assertion.
\end{proof}
If an $m$-type edge $f$ is given by the form 
\eqref{eq:edgenormal1}, then
$\kappa_s(0)=a(0)$,
$\kappa_\nu(0)=b(0)$ and
$\kappa_t(0)=(b_m)_u(0,0)$.
\subsection{Boundedness of Gaussian curvature and mean curvature near an $m$-type edge}
Here we study the behavior of the Gaussian and mean curvatures.

Let $g:(\R^i,0)\to\R$ be a function-germ ($i=1,2)$.
If there exists an integer $n$ $(n\geq1)$ 
such that $g\in \M_i^{n}$ and $g\not\in \M_i^{n+1}$,
then $g$ is said to be of {\it order\/} $n$, 
where $\M_i=\{g\colon(\R^i,0)\to\R\ |\ g(0)=0\}$ is the unique maximal ideal of the local ring 
of function-germs and $\M_i^{n}$ denotes the $n$th power of $\M_i$ 
(cf. \cite[p. 46]{ifrt-book}).
If $g\not\in \M_i$, then the order of $g$ is $0$.
The order of $g$ is denoted by $\ord(g)$.
If $g$ is of order $n$ $(n\geq 0)$, then $g$ is 
said to be of {\it finite order}.
Let $g_1,g_2:(\R^i,0)\to\R$ be two function-germs such that
$g_i$ is of finite order.
The {\it rational order\/} $\ord(f)$ of 
a function $f=g_1/g_2:(\R^i\setminus Z,0)\to\R$, where
$Z=g_2^{-1}(0)$ is 
$$
\ord(f)=\ord(g_1)-\ord(g_2).
$$
For a function $f=g_1/(|g_2|g_3):(\R^i\setminus Z,0)\to\R$, 
we define $\ord (f)=\ord(g_1)-\ord(g_2)-\ord(g_3)$,
where
$Z=g_2^{-1}(0)\cup g_3^{-1}(0)$.
If $g_1\in \M_i^\infty$, then we define $\ord(f)=\infty$.
If $\ord(f)=0$, then $f$ is called {\it rationally bounded}, and
$\ord(f)=1$, then $f$ is called {\it rationally continuous}
(\cite[Definition 3.4]{msuy}).
If $i=1$, this is the usual one.

Since the property 
$g\in \M_i^{n}$ does not depend on the choice of coordinate system,
the order and the rational order does not depend on the
choice of coordinate system.

Let $f\colon(\R^2,0)\to(\R^3,0)$ be an $m$-type edge, and 
let $(u,v)$ be an adapted coordinate system with $\partial_v$ satisfying
\ref{itm:cri1}.
We take $(m-1)!\psi$ in Corollary \ref{cor:div}. Namely, here we set $\psi$ by
$f_v=v^{m-1}\psi/(m-1)!$.
Since $f$ is an $m$-type edge, 
$f_u$ and $\psi$ is linearly
independent (Proposition \ref{prop:sufcond} and the independence of 
the condition \ref{itm:cri2}).
Thus the unit normal vector $\nu$ of $f$ can be taken as 
$\nu=\hat{\nu}/|\hat{\nu}|$ $(\hat{\nu}=f_u\times\psi)$.
Using $f_u$, $\psi$ and $\nu$, we define the following functions: 
\begin{align*}
\what{E}&=\inner{f_u}{f_u}, & \what{F}&=\inner{f_u}{\psi}, & \what{G}&=\inner{\psi}{\psi}, \\
\what{L}&=-\inner{f_u}{\what\nu_u}, & 
\what{M}&=-\inner{\psi}{\what\nu_u}, & \what{N}&=-\inner{\psi}{\what\nu_v}.
\end{align*} 
We note that coefficients of the first and the second fundamental forms
of $\sigma$-edges being of multiplicity $m$ can be written as 
\begin{align*}
E&=\what{E}, & F&=\dfrac{v^{m-1}}{(m-1)!}\what{F}, & G&=\left(\dfrac{v^{m-1}}{(m-1)!}\right)^2\what{G},\\
L&=\dfrac{\what{L}}{|\what\nu|}, & 
M&=\dfrac{v^{m-1}}{|\what\nu|(m-1)!}\what{M}, & 
N&=\dfrac{v^{m-1}}{(m-1)!|\what\nu|}\what{N}.
\end{align*}

\begin{lemma}
The differentials $\nu_u$ and $\nu_v$ of $\nu$ are written as 
\begin{align*}
\nu_u&=
-\dfrac{\what{G}\what{L}-\what{F}\what{M}}
{(\what{E}\what{G}-\what{F}^2)|\hat{\nu}|}f_u
-\dfrac{\what{E}\what{M}-\what{F}\what{L}}
{(\what{E}\what{G}-\what{F}^2)|\hat{\nu}|}\psi,\\
\nu_v&=
-\dfrac{\dfrac{v^{m-1}}{(m-1)!}\what{G}\what{M}-\what{F}\what{N}}
{(\what{E}\what{G}-\what{F}^2)|\hat{\nu}|}f_u
-\dfrac{\what{E}\what{N}-\dfrac{v^{m-1}}{(m-1)!}\what{F}\what{M}}
{(\what{E}\what{G}-\what{F}^2)|\hat{\nu}|}\psi.
\end{align*}
\end{lemma}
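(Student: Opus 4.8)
The plan is to use that $\nu$ has unit length, so $\nu_u$ and $\nu_v$ are orthogonal to $\nu$ and hence lie in the tangent plane $\spann{f_u,\psi}$, which is two-dimensional because $f_u$ and $\psi$ are linearly independent (Proposition~\ref{prop:sufcond} together with the independence of condition~\ref{itm:cri2}). I would therefore write
\begin{equation*}
\nu_u=Af_u+B\psi,\qquad \nu_v=Cf_u+D\psi
\end{equation*}
and determine the coefficients by pairing with $f_u$ and $\psi$. Each pairing produces a $2\times2$ linear system whose coefficient matrix is the Gram matrix $\pmt{\what E&\what F\\\what F&\what G}$, with determinant $\what E\what G-\what F^2$, so Cramer's rule will reproduce the asserted expressions once the four right-hand sides are identified.

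Writing $\hat\nu=f_u\times\psi$ and differentiating $\nu=\hat\nu/|\hat\nu|$ gives $\nu_u=\hat\nu_u/|\hat\nu|-\hat\nu\,|\hat\nu|_u/|\hat\nu|^2$, and similarly for $\nu_v$. Since $\inner{f_u}{\hat\nu}=\inner{\psi}{\hat\nu}=0$, the terms carrying the derivatives of $|\hat\nu|$ drop out of every inner product taken against $f_u$ or $\psi$. The definitions $\what L=-\inner{f_u}{\hat\nu_u}$, $\what M=-\inner{\psi}{\hat\nu_u}$ and $\what N=-\inner{\psi}{\hat\nu_v}$ then give at once
\begin{equation*}
\inner{\nu_u}{f_u}=-\what L/|\hat\nu|,\qquad \inner{\nu_u}{\psi}=-\what M/|\hat\nu|,\qquad \inner{\nu_v}{\psi}=-\what N/|\hat\nu|,
\end{equation*}
and feeding the first two into the system for $(A,B)$ yields the stated formula for $\nu_u$ directly.

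The one remaining quantity $\inner{\nu_v}{f_u}$ is not among the defined $\what L,\what M,\what N$, and computing it is the crux of the argument. I would differentiate $\inner{f_u}{\hat\nu}=0$ in $v$ to obtain $\inner{f_u}{\hat\nu_v}=-\inner{f_{uv}}{\hat\nu}$, and then invoke the adapted-coordinate factorization $f_v=v^{m-1}\psi/(m-1)!$ from Corollary~\ref{cor:div}. Differentiating this in $u$ gives $f_{uv}=\tfrac{v^{m-1}}{(m-1)!}\psi_u$, while differentiating $\inner{\psi}{\hat\nu}=0$ in $u$ gives $\inner{\psi_u}{\hat\nu}=-\inner{\psi}{\hat\nu_u}=\what M$. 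Combining these produces the mixed symmetry
\begin{equation*}
\inner{\nu_v}{f_u}=-\frac{v^{m-1}}{(m-1)!}\,\frac{\what M}{|\hat\nu|},
\end{equation*}
which is exactly what replaces the classical relation $\inner{f_u}{\nu_v}=\inner{f_v}{\nu_u}$ in the presence of the singular factor $v^{m-1}$.

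With this last ingredient, the system for $(C,D)$ has right-hand sides $-\tfrac{v^{m-1}}{(m-1)!}\what M/|\hat\nu|$ and $-\what N/|\hat\nu|$, and solving by Cramer's rule gives the formula for $\nu_v$, with the factor $\tfrac{v^{m-1}}{(m-1)!}$ landing precisely in the two coefficients where it is displayed. The only genuine obstacle is keeping track of this factor and verifying the mixed symmetry $\inner{f_u}{\hat\nu_v}=-\tfrac{v^{m-1}}{(m-1)!}\what M$; everything else is the standard Weingarten computation, carried out in the frame $(f_u,\psi)$ rather than $(f_u,f_v)$.
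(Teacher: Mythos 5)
Your proposal is correct and follows essentially the same route as the paper: decompose $\nu_u,\nu_v$ in the frame $(f_u,\psi)$ and solve the Gram-matrix system, the paper merely stating the four inner products without detail. Your explicit verification of the mixed term $\inner{\nu_v}{f_u}=-\tfrac{v^{m-1}}{(m-1)!}\what{M}/|\hat{\nu}|$ via $f_{uv}=\tfrac{v^{m-1}}{(m-1)!}\psi_u$ is exactly the step the paper leaves implicit, and it is carried out correctly.
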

\begin{proof}
Since $\inner{\nu_u}{\nu}=\inner{\nu_v}{\nu}=0$, 
there exist functions $A,B,C,D$ on $(\R^2,0)$ such that 
$$\nu_u=Af_u+B\psi,\quad \nu_v=Cf_u+D\psi.$$
Considering $\inner{\nu_u}{f_u},\inner{\nu_u}{\psi},\inner{\nu_v}{f_u}$ and $\inner{\nu_v}{\psi}$, 
we have 
$$
-\frac{1}{|\hat{\nu}|}
\begin{pmatrix}
\what{L} \\ \what{M}
\end{pmatrix}=
\begin{pmatrix} 
\what{E} & \what{F} \\ \what{F} & \what{G} 
\end{pmatrix}
\begin{pmatrix} A \\ B \end{pmatrix},\quad 
-\frac{1}{|\hat{\nu}|}
\begin{pmatrix} \frac{v^{m-1}}{(m-1)!}\what{M} \\ \what{N} \end{pmatrix}=
\begin{pmatrix} 
\what{E} & \what{F} \\ \what{F} & \what{G} 
\end{pmatrix}
\begin{pmatrix} C \\ D \end{pmatrix}.
$$
Solving these equations, we have the assertion. 
\end{proof}

By this lemma, $\nu_v$ can be written as 
$$\nu_v=\frac{\what{N}}{(\what{E}\what{G}-\what{F}^2)|\hat{\nu}|}(\what{F}f_u-\what{E}\psi)$$
along the $u$-axis. 
Since $f_u$ and $\psi$ are linearly independent and $\what{E}\neq0$, 
the condition
$\nu_v(0)\neq0$ is equivalent to $\what{N}(0)\neq0$. 
To see this fact, we take
the same setting in the proof of Proposition \ref{lem:notdepend}.
Then we see
\begin{equation}\label{eq:hatn}
\det(f_u,f_{v^{m}},f_{v^{m+1}})
=m\det(f_u,\psi,\psi_v)=m\inner{\hat{\nu}}{\psi_v}=m\what{N}
\end{equation}
along the $u$-axis, 
where $\hat{\nu}=f_u\times\psi$ and 
$\what{N}=\inner{\hat{\nu}}{\psi_v}=-\inner{\hat{\nu}_v}{\psi}$. 
Since $\{f_u,\psi,\nu\}$ is a frame of $\R^3$, and
$\inner{f_u}{\nu_v}=\inner{f_v}{\nu_u}=0$,
$\inner{\nu}{\nu_v}=0$, it holds that
$\nu_v\ne0$ if and only if $\inner{\nu_v}{\psi}\ne0$.
Moreover, since $\inner{\nu}{\psi}=0$, it holds that
$\inner{\nu_v}{\psi}\ne0$ is equivalent to
$\inner{\hat{\nu}_v}{\psi}\ne0$.
Let $f:(\R^2,0)\to(\R^3,0)$ be an $(m,n)$-type edge,
and let us set
$$
r=\min\Big(\{n\}\cup
\{im\,|\,b_{im}(0)\ne0\text{ in the form }
\eqref{eq:edgenormal2}, i=2,3,\ldots\}\Big).
$$

\begin{theorem}\label{thm:ordkh}
Let $f:(\R^2,0)\to(\R^3,0)$ be an $(m,n)$-type edge.
Then the rational order of 
the mean curvature $H$ is $r-2m$.
If the normal curvature does not vanish at $0$,
then the rational order of 
the Gaussian curvature $K$ is $r-2m$.
\end{theorem}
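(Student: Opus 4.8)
The plan is to compute the Gaussian and mean curvatures directly from the normal form \eqref{eq:edgenormal2}, using the adapted frame $\{f_u,\psi,\nu\}$ and the coefficients $\what E,\what F,\what G,\what L,\what M,\what N$ introduced above. Recall that for an $(m,n)$-type edge the coefficients of the fundamental forms carry explicit powers of $v$: we have $F=\tfrac{v^{m-1}}{(m-1)!}\what F$, $G=\bigl(\tfrac{v^{m-1}}{(m-1)!}\bigr)^2\what G$, and likewise $M,N$ each carry a factor $\tfrac{v^{m-1}}{(m-1)!|\hat\nu|}$, while $L=\what L/|\hat\nu|$. Substituting these into the standard formulas
\[
K=\frac{LN-M^2}{EG-F^2},\qquad
H=\frac{EN-2FM+GL}{2(EG-F^2)}
\]
makes the $v$-powers explicit, so the orders can be read off once the orders of the $\widehat{\phantom{E}}$-quantities are determined along the $u$-axis.

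The key computational step is to determine $\ord_v$ of the numerators and denominators. First I would record that $EG-F^2=\tfrac{v^{2(m-1)}}{((m-1)!)^2}(\what E\what G-\what F^2)$ and that $\what E\what G-\what F^2=|f_u\times\psi|^2=|\hat\nu|^2$ is a nonvanishing unit (by \ref{itm:cri2}), so $\ord_v(EG-F^2)=2(m-1)$. For the second fundamental form I would use the identity \eqref{eq:hatn}, namely $\what N=\tfrac{1}{m}\det(f_u,f_{v^m},f_{v^{m+1}})$ along the $u$-axis, and more generally relate $\what N$ to the higher-order data $b_{im},b_n$ of the normal form: the whole point of the definition of $r$ is that $\what N$ vanishes to order exactly $r-m-1$ in $v$, since $r$ is the smallest exponent at which a genuinely new (linearly independent) vector $\eta^r f$ appears. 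Concretely, differentiating $\psi$ repeatedly and using Lemma \ref{lem:psiv} together with the structure of \eqref{eq:edgenormal2}, the first nonvanishing contribution to $\what N=\inner{\hat\nu}{\psi_v}$ off the $u$-axis comes at order $r-m-1$. This gives $\ord_v N=(m-1)+(r-m-1)=r-2$, while $\ord_v M\ge m-1$ and $\ord_v L\ge 0$.

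With these orders in hand the theorem follows by bookkeeping. For the mean curvature, the numerator $EN-2FM+GL$ has leading term governed by $EN$ (order $r-2$) unless cancellation with $GL$ occurs; since $E(0)=|f_u|^2\ne0$ and $G$ carries order $2(m-1)\ge r-2$ precisely when $r\le 2m$, I would check that the $EN$ term dominates or ties and extract $\ord_v(\text{numerator})=r-2$, whence $\ord H=(r-2)-2(m-1)=r-2m$. For the Gaussian curvature the numerator is $LN-M^2$; here $\ord_v(LN)=r-2$ (using $L(0)=\what L(0)/|\hat\nu(0)|=\kappa_\nu(0)|\hat\nu(0)|\ne0$, which is exactly where the hypothesis that the normal curvature is nonzero enters) and $\ord_v(M^2)\ge 2(m-1)\ge r-2$, so again the numerator has order $r-2$ and $\ord K=r-2m$.

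The main obstacle I expect is the careful determination that $\what N$ vanishes to order \emph{exactly} $r-m-1$, and the verification that no unexpected cancellation lowers the order of the numerators of $H$ and $K$. This requires tracking how the coefficients $b_{im}$ and $b_n$ of \eqref{eq:edgenormal2} feed into $\psi_v,\psi_{vv},\dots$ via the relation $f_v=v^{m-1}\psi/(m-1)!$, and confirming that the minimum defining $r$ is genuinely realized in $\what N$ and not merely an upper bound. The hypothesis $\kappa_\nu(0)\ne0$ for the Gaussian case is needed precisely to guarantee $L(0)\ne0$ so that the $LN$ term is not suppressed; without it the order of $K$ could jump, which is why that assumption appears only in the second statement.
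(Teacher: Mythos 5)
Your proposal follows essentially the same route as the paper: both write $K$ and $H$ via the fundamental-form coefficients with the $v^{m-1}$ powers factored into the hatted quantities, identify that $\what{E}$, $\what{E}\what{G}-\what{F}^2$ are units while $\what{N}$ vanishes to order exactly $r-m-1$ (with leading coefficient governed by $b_r(0)$, via \eqref{eq:hatn} and the normal form \eqref{eq:edgenormal2}), and read off the orders, with $\kappa_\nu(0)\ne0\Leftrightarrow\what{L}(0)\ne0$ entering only for $K$. The bookkeeping and the role of each hypothesis match the paper's proof, so the plan is sound.
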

\begin{proof}
We take an adapted coordinate system $(u,v)$ such that $\partial_v$
satisfies \ref{itm:cri1}.
Since $\hat L=\inner{f_{uu}}{\nu}$, the normal curvature does not vanish
if and only if $\hat L(0)\ne0$.
The Gaussian curvature $K$ and the mean curvature $H$ of $f$ are given by 
\begin{equation*}
K=\dfrac{(m-1)!}{v^{m-1}}\dfrac{\what{L}\what{N}-\frac{v^{m-1}}{(m-1)!}\what{M}^2}
{|\what\nu|^2(\what{E}\what{G}-\what{F}^2)},\quad
H=\dfrac{(m-1)!}{v^{m-1}}
\dfrac{\what{E}\what{N}-2\frac{v^{m-1}}{(m-1)!}\what{F}\what{M}+\frac{v^{m-1}}{(m-1)!}\what{G}\what{L}}{2|\what\nu|(\what{E}\what{G}-\what{F}^2)}.
\end{equation*}
This and $\what{E}\ne0$, $\what{E}\what{G}-\what{F}^2\ne0$ at $0$, 
together with
$$
\what N
=
\dfrac{v^{r-m-1}}{m(r-m-1)!}
(b_r(0)+v\alpha(u,v)),
$$
where $\alpha$ is a function,
by using the form \eqref{eq:edgenormal2} and \eqref{eq:hatn}
give the assertion.
\end{proof}

By Theorem \ref{thm:ordkh}, the order of $K$ and $H$
coincide. Moreover
since $n<2m$, they never bounded when the normal curvature
does not vanish.

\section{Curves passing through $m$-type edges}
In this section, we consider geometric invariants
of a curve $\gamma$ passing through an $m$-type edge $f$.
If $\hat\gamma=f\circ\gamma$ is non-singular,
then the usual invariants can be defined 
as well as the regular case.
We consider the case when $\hat\gamma$ has a singular point,
namely, $\gamma$ passing through a singular point of $f$ in the direction
of a null vector.

\subsection{Normalized curvatures of singular curves}
\label{sec:normcurv}
Following \cite{shibaume,fukui},
we introduce normalized curvature on curves in $\R^2$.
Let $\hat\gamma:(\R,0)\to(\R^n,0)$
be a curve, and let $0$ be a singular point.
We assume that there exists $k$ such that
$\hat\gamma'=t^{k-1}\rho$ ($\rho(0)\ne0$).

We set
\begin{equation}\label{eq:arclengt}
s=\int |\hat\gamma'|\,dt,
\end{equation}
and
\begin{equation}\label{eq:tilde:s}
\tilde s=\sgn(s)|s|^{1/k},
\end{equation}
we see $\tilde s$ is a $C^\infty$ function and
$d\tilde s/dt(0)>0$.
We call this parameter an $1/k$-{\it arc-length}.

\begin{proposition}
The parameter $t$ is an $1/k$-arc-length parameter of $\hat\gamma$ if and only if
	$|\hat\gamma'(t)|=k|t^{k-1}|$.
\end{proposition}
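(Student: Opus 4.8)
The plan is to read the definition of a $1/k$-arc-length parameter through \eqref{eq:tilde:s}: to say that $t$ itself is a $1/k$-arc-length parameter of $\hat\gamma$ means precisely that the function $\tilde s$ built from this parametrization is the identity, $\tilde s(t)=t$. This is the natural normalization, since $\tilde s(0)=0$ and $d\tilde s/dt(0)>0$, and it is the exact analogue of the regular-curve fact that $t$ is an arc-length parameter if and only if $s(t)=t$. I would then isolate the single pivot identity
$$ s(t)=\sgn(t)\,|t|^{k}, $$
and prove that each of the two conditions in the statement is equivalent to it; transitivity then yields the proposition.

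First I would record that, from $\hat\gamma'=t^{k-1}\rho$ with $\rho(0)\ne0$, one has $|\hat\gamma'(\tau)|=|\tau|^{k-1}|\rho(\tau)|\ge0$ near $0$, so by \eqref{eq:arclengt} the arc-length $s$ is nondecreasing with $s(0)=0$ (strictly increasing away from the origin, as $\rho(0)\ne0$); in particular $\sgn(s(t))=\sgn(t)$. Granting this, the equivalence between the pivot identity and $\tilde s(t)=t$ is immediate: substituting $s=\sgn(t)|t|^{k}$ into \eqref{eq:tilde:s} gives $\tilde s=\sgn(t)\,(|t|^{k})^{1/k}=\sgn(t)|t|=t$, while conversely $\tilde s=t$ forces $|s|^{1/k}=|t|$ and $\sgn(s)=\sgn(t)$, i.e. $s=\sgn(t)|t|^{k}$. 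For the other equivalence I would invoke the fundamental theorem of calculus together with the elementary fact that $\phi(t)=\sgn(t)|t|^{k}$ is differentiable with $\phi'(t)=k|t|^{k-1}$ for every $t$ (including $t=0$ when $k\ge2$); since $s(0)=\phi(0)=0$, the pivot identity $s=\phi$ holds if and only if $s'=\phi'$, that is, if and only if $|\hat\gamma'(t)|=k|t|^{k-1}=k|t^{k-1}|$.

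The only genuine care needed, and hence the main (though routine) obstacle, is the sign bookkeeping: $k$ may be even or odd, so $t^{k-1}$ need not be nonnegative, and both $\tilde s$ and $\phi$ are defined through $\sgn$ and absolute values. I would handle this uniformly by working with $|t^{k-1}|=|t|^{k-1}$ and with $\phi(t)=\sgn(t)|t|^{k}$ rather than splitting into cases, checking once that $\phi'(t)=k|t|^{k-1}$ holds across $t=0$ (and reducing to $\phi(t)=t$ when $k=1$). With this, no case analysis on the parity of $k$ is required, and the two equivalences chain together to give the claim.
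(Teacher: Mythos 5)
Your proof is correct and follows essentially the same route as the paper's: both arguments reduce the statement to the identity $s(t)=\sgn(t)|t|^{k}$ (the paper writes it as $s(t)=\varepsilon t^{k}$ with a parity-dependent sign $\varepsilon$), use $\sgn(s)=\sgn(t)$ to pass between this and $\tilde s(t)=t$, and differentiate to recover $|\hat\gamma'(t)|=k|t^{k-1}|$. Your uniform treatment via $\phi(t)=\sgn(t)|t|^{k}$ with $\phi'(t)=k|t|^{k-1}$ merely avoids the paper's explicit case split on the parity of $k$; the substance is identical.
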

\begin{proof}
If $|\hat\gamma'(t)|=k|t^{k-1}|$ and $s(t)$ as in \eqref{eq:arclengt}
it holds that
$$
s(t)=\int_{0}^{t} k|\xi^{k-1}|d\xi=\int_{0}^{t}  \varepsilon \,  k \xi^{k-1}d\xi = \varepsilon \, t^k,\ 
\left(
\varepsilon=
\begin{dcases}
\sgn(t), \ & \text{if $k$ is even}\\ 
1 \ & \text{if $k$ is odd}
\end{dcases}
\right).
$$
Since $\sgn(s) = \sgn(t)$, then
$|s| = |t^k| $ and, therefore, $t= \sgn(s)|s|^{1/k}$.

Let us suppose now that $t$ is the $1/k$-arc-length, i.e., $t = \sgn(s) |s|^{1/k}$, with $s(t)$ as in \eqref{eq:arclengt}.
Since $\sgn (s) = \sgn (t)$, thus $t^k = \sgn(s)^k |s| =  \sgn(s)^{k+1}s$ and, consequently, $s'(t) = \sgn(t)^{k+1} k t^{k-1} = k |t|^{k-1}$. Therefore, it holds that $|\hat\gamma'(t)|=k|t^{k-1}|$.
\end{proof}

Let us set $n=2$.
Then the curvature $\kappa$ satisfies that
\begin{equation}
\tilde\kappa=|\tilde s^{k-1}|\kappa
\end{equation}
is a $C^\infty$ function.
We call $\tilde\kappa$ the {\it normalized curvature}.
This is originally introduced in \cite{shibaume} and
generalized in \cite{fukui}.
Let $f(t)$ be a given $C^\infty$-function, and $k\geq 2$ be an integer.
Then similarly to \cite[Theorem 1.1]{shibaume}, one can show that
there exists a unique plane curve up to isometries in $\R^2$ with 
normalized curvature given by 
$\tilde\kappa (t) = f(t)$, where $t$ is the $1/k$-arc-length parameter.

Using the frame $\{\e,\n\}$ along $\hat\gamma$ 
defined by $\e=\rho/|\rho|$ and $\n$ the $\pi/2$-rotation
of $\e$, the normalized curvature 
can be interpreted as follows:
We set the function $\kappa_1$ by
the equation
\begin{equation}\label{eq:frame:k1}
\pmt{\e'\\ \n'}=\pmt{0&\kappa_1\\ -\kappa_1&0}
\pmt{\e\\ \n},
\end{equation}
where ${}'$ is a differentiation by the $1/k$-arc-length.
Then we have:
\begin{proposition}
Let $\{\e,\n\}$ be the above frame along $\hat\gamma (t)$ in the Euclidean plane 
$\R^2$ satisfying \eqref{eq:frame:k1}, where $t$ is the $1/k$-arc-length parameter.
Then $\kappa_1=k \tilde\kappa$ holds.
\end{proposition}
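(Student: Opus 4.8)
The plan is to compare the regularized frame $\{\e,\n\}$, which is smooth across the singular point because it is built from $\rho/|\rho|$, with the genuine unit tangent of the curve, and then convert the ordinary signed curvature into a $t$-derivative by the chain rule. First I would record that, for $t\neq 0$, the unit tangent of $\hat\gamma$ is
$$
T=\frac{\hat\gamma'}{|\hat\gamma'|}=\frac{t^{k-1}\rho}{|t^{k-1}|\,|\rho|}=\sgn(t)^{k-1}\,\e ,
$$
since $\hat\gamma'=t^{k-1}\rho$. Consequently the $\pi/2$-rotation of $T$ equals $\sgn(t)^{k-1}\n$. The key observation here is that the scalar $\sgn(t)^{k-1}$ is locally constant away from $0$ (it is $1$ on $t>0$ and $(-1)^{k-1}$ on $t<0$), so differentiating $T$ in $t$ differentiates only $\e$.

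Next I would combine the defining relation $\e'=\kappa_1\n$ (differentiation in the $1/k$-arc-length $t$) with the classical formula $dT/ds=\kappa\,(\text{the }\pi/2\text{-rotation of }T)$ for the signed curvature $\kappa$ with respect to the ordinary arc-length $s$. By the chain rule,
$$
\frac{dT}{ds}=\sgn(t)^{k-1}\,\e'\,\frac{dt}{ds}=\sgn(t)^{k-1}\,\kappa_1\,\n\,\frac{dt}{ds},
$$
while the right-hand side of the classical formula is $\sgn(t)^{k-1}\,\kappa\,\n$. Comparing the two, the common factor $\sgn(t)^{k-1}$ and the vector $\n$ cancel, leaving $\kappa_1\,(dt/ds)=\kappa$, that is $\kappa_1=\kappa\,(ds/dt)$.

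Finally I would invoke the preceding proposition: since $t$ is the $1/k$-arc-length parameter, $ds/dt=|\hat\gamma'(t)|=k|t^{k-1}|$, whence $\kappa_1=k|t^{k-1}|\kappa$. Because the $1/k$-arc-length parameter is exactly $\tilde s$, this reads $\kappa_1=k|\tilde s^{\,k-1}|\kappa=k\tilde\kappa$, which is the claim; as both $\kappa_1$ and $k\tilde\kappa$ are $C^\infty$ in $t$ and agree for $t\neq 0$, the identity extends to $0$ by continuity. I expect the only delicate point to be the sign bookkeeping in $T=\sgn(t)^{k-1}\e$ and its effect on the rotated normal. The mild subtlety is that this sign depends on the parity of $k$ and the side of $t=0$, but since the \emph{same} factor appears on both sides of the curvature relation it cancels identically, so no case analysis on the parity of $k$ survives into the final formula.
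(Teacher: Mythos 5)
Your proof is correct, but it takes a different route from the paper's. The paper argues by direct computation: it uses $|\rho|=k$ (a consequence of $t$ being the $1/k$-arc-length) to write $\kappa=\det(\rho,\rho')/(k^3|t|^{k-1})$, hence $\tilde\kappa=\det(\rho,\rho')/k^3$, and separately computes $\kappa_1=\e'\cdot\n=\det(\rho,\rho')/k^2$ from $\e=\rho/k$; the identity then follows by comparing the two explicit expressions. You instead never touch $\det(\rho,\rho')$: you relate the regularized frame to the Frenet frame via $T=\sgn(t)^{k-1}\e$, observe that the locally constant sign factor passes through both the differentiation and the rotation, and extract $\kappa_1=\kappa\,(ds/dt)=k|t|^{k-1}\kappa=k\tilde\kappa$ from the Frenet equation and the chain rule, closing at $t=0$ by smoothness and continuity. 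Your argument is more structural: it isolates exactly where the factor $k$ comes from (namely $ds/dt=k|t|^{k-1}$) and makes clear that the sign bookkeeping is immaterial, whereas the paper's computation is shorter to write down but leaves the cancellation of the two $\det(\rho,\rho')$ expressions looking somewhat accidental. Both arguments rest on the same preceding proposition characterizing the $1/k$-arc-length by $|\hat\gamma'|=k|t^{k-1}|$, and both are complete.
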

\begin{proof} 
Since    $\hat\gamma' (t) = t^{k-1} \rho(t)$ where $\rho(0)	\neq  0$ and the $1/k$-arc-length parameter $t$ satisfies $|\hat\gamma'(t)|= k|t|^{k-1}$, so $
\hat\gamma''(t)=(k-1) t^{k-2}\rho(t)+t^{k-1}\rho'(t)
$ and $|\rho(t) |=k$. Then
\begin{equation*}
\kappa(t)=\dfrac{1}{k^3|t|^{k-1}} \det(\rho(t),\rho'(t)).
\end{equation*}
Consequently, 
\begin{equation*}
\tilde{\kappa}(t) = |t|^{k-1} \kappa(t) = \dfrac{1}{k^3} \det(\rho(t),\rho'(t)).
\end{equation*}
On the other hand, since $\kappa_1 (t) = \e'(t) \cdot \n(t)$, 
where
$\e(t)=\rho(t)/|\rho(t)| = \rho(t)/k$ and $\n(t)$ is 
the $\pi/2$-counterclockwise rotation of $\e(t)$, and the dot `$\cdot$' denotes the canonical inner product of $\R^2$, it holds that:	
\begin{equation*}
\kappa_1(t) = \frac{1}{k} \rho'(t) \cdot \n(t) =  \frac{1}{k^2}\det(\rho(t), \rho'(t)).
\end{equation*}	
Thus we have the assertion.
\end{proof}

\subsection{Normalized curvatures on frontals}\label{sec:curvatures}
According to Section \ref{sec:normcurv},
we define the normalized curvatures for curves on a frontal.
Let $f:(\R^2,0)\to(\R^3,0)$ be a frontal and 
$\nu$ a unit normal vector field of $f$.
Let 
$\gamma\colon (\R,0)\to (\R^2,0)$ be a curve.
We set $\hat\gamma=f\circ\gamma$.
We assume there exists $k$ such that
$\hat\gamma'=t^{k-1}\rho$ ($\rho(0)\ne0$).
The geodesic curvature $\kappa_g$, the normal curvature 
$\kappa_n$ and the
geodesic torsion $\tau_g$ are defined by
\begin{equation}
\kappa_g=\dfrac{\det(\hat\gamma',\hat\gamma'',\nu)}{|\hat\gamma'|^3},\ 
\kappa_n=\dfrac{\inner{\hat\gamma''}{\nu}}{|\hat\gamma'|^2}	,\ 
\tau_g=\dfrac{\det(\hat\gamma',\nu,\nu')}{|\hat\gamma'|^2}
\end{equation}
on regular points (see \cite[page 261]{docarmo}).
These curvatures can be unbounded near  singular points. 
Indeed, it holds that 
\begin{equation}\label{eq:reg:normaliz:curv}
\kappa_g=\dfrac{1}{|t|^{k-1}}\dfrac{\det(\rho,\rho',\nu)}{|\rho|^3}, \ 
\kappa_n=\dfrac{1}{t^{k-1}}\dfrac{\inner{\rho'}{\nu}}{|\rho|^2},\ 
\tau_g= \dfrac{1}{t^{k-1}} \dfrac{\det(\rho,\nu,\nu')}{|\rho|^2}.
\end{equation} 
One can easily see that 
\begin{equation}\label{eq:sing:normalz:curv}
\tilde\kappa_g=|\tilde s|^{k-1}\, \kappa_g,\ 
\tilde\kappa_n=\tilde s^{k-1}\, \kappa_n,\ 
\tilde\tau_g=\tilde s^{k-1}\, \tau_g
\end{equation}
are $C^\infty$ functions, where $\tilde{s}$ is the function given by \eqref{eq:tilde:s} for $\hat{\gamma}$. We call $\tilde\kappa_g, \tilde\kappa_n,  \tilde\tau_g$   {\it normalized} geodesic curvature, normal curvature and  geodesic torsion of $\tilde{\gamma}$, respectively.
These satisfy:
\begin{lemma}
It holds that 
\begin{align}
\tilde\kappa_g&=
\dfrac{1}{k^2\, k!^{-1/k}}
\dfrac{\det\left(\hat\gamma^{(k)},\hat\gamma^{(k+1)},\nu\right)}
{|\hat\gamma^{(k)}|^{2+1/k}},\\[2mm]
\tilde\kappa_n&=
\dfrac{1}
{k^2\, k!^{-1/k}}
\dfrac{\inner{\hat\gamma^{(k+1)}}{\nu}}
{|\hat\gamma^{(k)}|^{1+1/k}},\\[2mm]
\tilde\tau_g&=\dfrac{1}{k\, k!^{-1/k}}
\dfrac{\det(\hat\gamma^{(k)},\nu,\nu')}
{|\hat\gamma^{(k)}|^{1+1/k}}
\end{align}
at $t=0$. 
\end{lemma}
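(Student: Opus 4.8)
The plan is to reduce everything to the regular-point expressions \eqref{eq:reg:normaliz:curv} for $\kappa_g,\kappa_n,\tau_g$ in terms of $\rho$, to substitute these into the definitions \eqref{eq:sing:normalz:curv} of the normalized curvatures, and to evaluate the resulting expressions in the limit $t\to0$. Two auxiliary computations carry the whole argument: relating $\rho(0)$ and $\rho'(0)$ to the Taylor coefficients $\hat\gamma^{(k)}(0)$ and $\hat\gamma^{(k+1)}(0)$, and pinning down the leading asymptotics of $\tilde s$ as a function of $t$.

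First I would compute the derivatives. Differentiating $\hat\gamma'=t^{k-1}\rho$ by the Leibniz rule and evaluating at $t=0$, only the term in which the factor $t^{k-1}$ is differentiated exactly $k-1$ times survives, which gives $\hat\gamma^{(k)}(0)=(k-1)!\,\rho(0)$; differentiating once more and again keeping only the surviving term yields $\hat\gamma^{(k+1)}(0)=k!\,\rho'(0)$. Hence $\rho(0)=\hat\gamma^{(k)}(0)/(k-1)!$ and $\rho'(0)=\hat\gamma^{(k+1)}(0)/k!$, and in particular $|\hat\gamma^{(k)}(0)|=(k-1)!\,|\rho(0)|$. Note that in the torsion formula the vectors $\nu$ and $\nu'$ appear unscaled, so only the $\rho$-factors acquire factorials upon substitution.

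Next I would determine $\tilde s$ to leading order. From \eqref{eq:arclengt} and $|\hat\gamma'|=|t|^{k-1}|\rho|$ one obtains $s(t)=|\rho(0)|\,|t|^k/k+O(|t|^{k+1})$ with $\sgn(s)=\sgn(t)$ near $0$; inserting this into \eqref{eq:tilde:s} gives
\[
\tilde s=\Bigl(\tfrac{|\rho(0)|}{k}\Bigr)^{1/k}t\bigl(1+O(t)\bigr),
\]
so that $\tilde s/t\to(|\rho(0)|/k)^{1/k}$, consistently with $d\tilde s/dt(0)>0$ as already remarked. Consequently both $(\tilde s/t)^{k-1}$ and $(|\tilde s|/|t|)^{k-1}$ tend to $(|\rho(0)|/k)^{(k-1)/k}$, which is exactly the factor needed to cancel the $1/t^{k-1}$ (respectively $1/|t|^{k-1}$) singularity in \eqref{eq:reg:normaliz:curv}.

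Finally I would assemble the three formulas. Multiplying each expression in \eqref{eq:reg:normaliz:curv} by the corresponding power of $\tilde s$ prescribed in \eqref{eq:sing:normalz:curv} and letting $t\to0$, each normalized curvature at $0$ becomes $(|\rho(0)|/k)^{(k-1)/k}$ times the value at $0$ of the corresponding $\rho$-expression. Substituting $\rho(0)=\hat\gamma^{(k)}(0)/(k-1)!$ and $\rho'(0)=\hat\gamma^{(k+1)}(0)/k!$ and collecting the powers of $|\hat\gamma^{(k)}|$ produces exactly the exponents $2+1/k$ and $1+1/k$, while the numerical constant collapses, via $(k-1)!=k!/k$, to $\tfrac{1}{k^2}(k!)^{1/k}$ for $\tilde\kappa_g$ and $\tilde\kappa_n$ and to $\tfrac1k(k!)^{1/k}$ for $\tilde\tau_g$; this is precisely the claimed formula. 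The only delicate point is justifying that the $O(t)$ error in $\tilde s$ contributes nothing in the limit: since each normalized curvature is already known to be a $C^\infty$ function of $t$, its value at $0$ equals the limit of the regular-point expression, so passing to $t\to0$ is legitimate and the higher-order terms drop out.
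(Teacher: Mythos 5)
Your proposal is correct and follows essentially the same route as the paper's proof: both compute $\rho(0)=\hat\gamma^{(k)}(0)/(k-1)!$, $\rho'(0)=\hat\gamma^{(k+1)}(0)/k!$, extract the leading asymptotics $\tilde s^{k-1}=t^{k-1}\bigl(\rho_0^{(k-1)/k}/k^{(k-1)/k}+O(t)\bigr)$, and substitute into \eqref{eq:reg:normaliz:curv} and \eqref{eq:sing:normalz:curv}, with the constants collapsing via $(k-1)!=k!/k$.
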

\begin{proof} 
Since $\hat\gamma'(t)=t^{k-1}\rho(t)$, then $\rho(0) = \frac{\hat\gamma^{(k)}(0)}{(k-1)!}$, $\rho'(0)=\frac{\hat\gamma^{(k+1)}(0)}{k!}$ and  $\rho_0=|\rho(0)|=\frac{|\hat\gamma^{(k)}(0)|}{(k-1)!}$. Therefore, it holds that
\begin{equation*}
\tilde s^{k-1}=t^{k-1} \left(\dfrac{\rho_0^{(k-1)/k}}{k^{(k-1)/k}} +  tO(t) \right)
\end{equation*}
where $O(t)$ is a smooth function of $t$.
Thus by \eqref{eq:reg:normaliz:curv} and \eqref{eq:sing:normalz:curv}, we get at $t=0$:
\[
\begin{aligned}
\tilde{\kappa}_g&=
\frac{\rho_0^{\frac{k-1}{k}} k^{1/k}}{k}\dfrac{\det(\rho,\rho',\nu)}{\rho_0^3}
=\dfrac{k^{1/k}}{k} \, \dfrac{\det(\rho,\rho',\nu)}{\rho_0^{2+1/k}}\\
&=\dfrac{k^{1/k}(k-1)!^{2+1/k}}{k\, (k-1)!\,k!}\dfrac{\det(\hat\gamma^{(k)},\hat\gamma^{(k+1)},\nu)}{ |\hat\gamma^{(k)}|^{2+1/k}}
=\dfrac{k!^{1/k}}{k^2}\dfrac{\det(\hat\gamma^{(k)},\hat\gamma^{(k+1)},\nu)}{|\hat\gamma^{(k)}|^{2+1/k}},
\end{aligned}
\]
\[
\begin{aligned}
\tilde{\kappa}_n&=\frac{\rho_0^{(k-1)/k}}{k^{(k-1)/k}} \,  \frac{\inner{\rho'}{\nu}}{\rho_0^2}
	=\frac{k^{1/k}}{k}\frac{\inner{\rho'}{\nu}}{\rho_0^{1+1/k}}\\
	&=\frac{k^{1/k}(k-1)!^{1+1/k}}{k\,k!}\frac{\inner{\hat\gamma^{(k+1)}}{\nu}}{|\hat\gamma^{(k)}|^{1+1/k}}
	=\frac{k!^{1/k}}{k^2}\frac{\inner{\hat\gamma^{(k+1)}}{\nu}}{|\hat\gamma^{(k)}|^{1+1/k}}
\end{aligned}
\]
and
\[
\begin{aligned}
\tilde{\tau}_g&=\frac{\rho_0^{(k-1)/k}}{k^{(k-1)/k}}\dfrac{\det(\rho,\nu,\nu')}{\rho_0^2}=\dfrac{k^{1/k}}{k}\dfrac{\det(\rho,\nu,\nu')}{\rho_0^{1+1/k}}\\&
=\dfrac{k^{1/k}(k-1)!^{1+1/k}}{k(k-1)!}\dfrac{\det(\hat\gamma^{(k)},\nu,\nu')}{|\hat\gamma^{(k)}|^{1+1/k}}
=\dfrac{k!^{1/k}}{k}\dfrac{\det(\hat\gamma^{(k)},\nu,\nu')}{|\hat\gamma^{(k)}|^{1+1/k}},
\end{aligned}
\]
which show the assertion.
\end{proof}

Similar with the case of plane curves, these invariants 
can be interpreted as follows.
Under the same assumption above,
we set
$e=\rho/|\rho|$,
$\nu=\nu(\hat\gamma)$ and
$b=-e\times\nu$.
Then $\{e,\nu,b\}$ is a frame along $\hat\gamma$.
We define $\kappa_1,\kappa_2,\kappa_3$ by
\begin{equation}
\pmt{
e'\\ b' \\ \nu'}
=
\pmt{
0&\kappa_1&\kappa_2\\
-\kappa_1&0&\kappa_3\\
-\kappa_2&-\kappa_3&0}
\pmt{
e\\ b \\ \nu},
\end{equation}
where $'=d/dt$ 
is the differentiation by the $1/k$-arc-length parameter.
With the above notation, we get the following:
\begin{proposition}
If $t$ is the $1/k$-arc-length parameter, then 
	$$\kappa_1= k  \tilde\kappa_g, \  \kappa_2= k  \tilde\kappa_n \ \
\text{and}\ \  \kappa_3 = k  \tilde{\tau}_g $$ holds,
for any $t$.
\end{proposition}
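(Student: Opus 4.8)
The plan is to read $\kappa_1,\kappa_2,\kappa_3$ directly off the structure equations as inner products of frame derivatives, compute each in terms of $\rho$, and then independently express $k\tilde\kappa_g,k\tilde\kappa_n,k\tilde\tau_g$ in terms of $\rho$ and match. The essential simplification is that the $1/k$-arc-length hypothesis rigidifies everything: by the proposition characterizing this parameter we have $|\hat\gamma'(t)|=k|t^{k-1}|$, and since $\hat\gamma'=t^{k-1}\rho$ this forces $|\rho(t)|\equiv k$, whence $e=\rho/k$ and $b=-\tfrac1k\,\rho\times\nu$. The same proposition's proof gives $t=\sgn(s)|s|^{1/k}=\tilde s$, so $\tilde s^{\,k-1}=t^{k-1}$ and $|\tilde s|^{k-1}=|t|^{k-1}$. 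Finally, because $e=\rho/k$ is tangent to $f$ we have $\inner{\rho}{\nu}=0$, which I will use repeatedly.

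From the displayed Darboux-type system, reading off the matrix entries gives $\kappa_1=\inner{e'}{b}$, $\kappa_2=\inner{e'}{\nu}$ and $\kappa_3=\inner{b'}{\nu}=-\inner{\nu'}{b}$, where $'=d/dt$ is differentiation by the $1/k$-arc-length. Substituting $e'=\rho'/k$ and $b=-\tfrac1k\,\rho\times\nu$ and using the scalar triple product $\inner{a}{x\times y}=\det(a,x,y)$ together with the permutation identities $\det(\rho',\rho,\nu)=-\det(\rho,\rho',\nu)$ and $\det(\nu',\rho,\nu)=\det(\rho,\nu,\nu')$, I expect to obtain
\[
\kappa_1=\frac{1}{k^2}\det(\rho,\rho',\nu),\qquad
\kappa_2=\frac{1}{k}\inner{\rho'}{\nu},\qquad
\kappa_3=\frac{1}{k}\det(\rho,\nu,\nu').
\]
These formulas are valid for all $t$ since the frame $\{e,b,\nu\}$ is smooth where $\rho\neq0$.

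On the other side, inserting $\hat\gamma'=t^{k-1}\rho$, $\hat\gamma''=(k-1)t^{k-2}\rho+t^{k-1}\rho'$ and $\inner{\rho}{\nu}=0$ into the defining formulas for $\kappa_g,\kappa_n,\tau_g$, the $\rho$-term in $\hat\gamma''$ drops out of every determinant and pairing, and using $|\hat\gamma'|=k|t|^{k-1}$ one finds that the powers of $t$ collect into a single factor $1/|t|^{k-1}$ (for $\kappa_g$) or $1/t^{k-1}$ (for $\kappa_n,\tau_g$). Multiplying by $|\tilde s|^{k-1}=|t|^{k-1}$, respectively $\tilde s^{\,k-1}=t^{k-1}$, then cancels these factors exactly and, with $|\rho|=k$, yields
\[
\tilde\kappa_g=\frac{\det(\rho,\rho',\nu)}{k^3},\qquad
\tilde\kappa_n=\frac{\inner{\rho'}{\nu}}{k^2},\qquad
\tilde\tau_g=\frac{\det(\rho,\nu,\nu')}{k^2}.
\]
Multiplying through by $k$ reproduces the three expressions for $\kappa_1,\kappa_2,\kappa_3$ found above, which proves the claim for $t\neq0$; since both sides are $C^\infty$, equality extends to $t=0$ by continuity. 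The only real care is bookkeeping: tracking the sign transpositions in the triple products (the swap in $\kappa_1$ and the minus sign in $\kappa_3=-\inner{\nu'}{b}$), and the parity argument showing $t^{2(k-1)}/|t|^{3(k-1)}=|t|^{-(k-1)}$ so that $|\tilde s|^{k-1}$ cancels precisely in the geodesic-curvature identity. I expect no conceptual obstacle beyond this verification.
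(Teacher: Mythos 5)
Your proposal is correct and follows essentially the same route as the paper's proof: both use $|\hat\gamma'|=k|t^{k-1}|$ to get $|\rho|\equiv k$ and $\tilde s=t$, read $\kappa_1,\kappa_2,\kappa_3$ off the frame equations as $\inner{e'}{b}$, $\inner{e'}{\nu}$, $-\inner{\nu'}{b}$, and match the resulting expressions in $\rho$ against the normalized curvatures via \eqref{eq:reg:normaliz:curv} and \eqref{eq:sing:normalz:curv}. Your sign bookkeeping in the triple products is the same computation the paper carries out.
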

\begin{proof}
The $1/k$-arc-length parameter $t$ satisfies  $|\hat\gamma'(t)|= k|t^{k-1}|$. Then  $|\rho(t)| = k$ and $e=\rho / k$. 
So, putting $\tilde{s} = t$ at \eqref{eq:sing:normalz:curv} and using \eqref{eq:reg:normaliz:curv} , it holds that
\[ \begin{aligned}
 \kappa_1&= \inner{e'}{b} =\dfrac{1}{k^2}\det(\rho,\rho',\nu) = k \tilde{\kappa}_g, \\[2mm]
\kappa_2&= \inner{e'}{\nu} =\dfrac{1}{k}\inner{\rho'}{\nu} = k \tilde{\kappa}_n \,,\\[2mm]
\kappa_3&= -\inner{\nu'}{b}  =\dfrac{1}{k}\det(\rho,\nu,\nu')= k \tilde{\tau}_g.
\end{aligned}
\]
Thus the assertion holds.
\end{proof}
\subsection{Behaviors of $\kappa_g,\kappa_n$ and $\tau_g$ passing through an $m$-type edge}

In this section we shall study the orders of the 
geodesic and normal curvatures and the geodesic torsion of 
a curve passing through an $m$-type edge, concluding on boundedness.
Describing the condition, we use the curvature of such curve.
Let  $f:(\R^2,0) \to (\R^3, 0)$ be an $m$-type edge, 
$m\geq2$, and $\gamma:(\R,0)\to (\R^2, 0)$  
be a regular curve such that $\gamma'(0)$ is a null vector of $f$ at $0$.
Let $(u,v)$ be a coordinate system which gives the form 
\eqref{eq:edgenormal1}, and $\tilde\gamma(t)=(u(t),v(t))$ be
a parametrization of $\gamma$ where the coordinate system on
the target space is $(u,v)$, and the orientation of $\tilde\gamma$ agrees
the direction of $v$ at $0$.
Since such coordinate system is unique 
(unique up to $(u,v)\mapsto(u,-v)$ if $m$ is even), 
the order of contact of $\tilde\gamma$ with the $v$-axis at $0$
and
the curvature $\tilde\kappa$ of $\tilde\gamma$ is well-defined as a curve
on $f$.
We call such order of contact
the {\it order of contact with the normalized null direction},
and
we call $\tilde\kappa$ the {\it curvature written in the normal form}.
If $\tilde\gamma(t)=(t^lc(t),t)$ ($c(0)\ne0$),
then 
the order of contact with the normalized null direction is $l$,
and
$\tilde\kappa^{(l-2)}(0) = -l!c(0)$ and $\tilde\kappa^{(l-1)}(0) = -(l+1)!c'(0)$
hold.

\begin{theorem}\label{thm:ord1}
Let  $f:(\R^2,0) \to (\R^3, 0)$ be an $m$-type edge, 
$m\geq2$, and $\gamma:(\R,0)\to (\R^2, 0)$  
be a regular curve with order of contact $l \geq 2$ with 
the null direction of $f$ at $0$ and $\tilde\kappa$ 
the curvature of $\gamma$ written in the normal form of $f$.
Then, it holds that:

{\rm (1)}  The case $l\geq m$. \\ For $\kappa_g$, 
	\begin{itemize}
	\item if $m<l\leq2m$, then
	$\ord\kappa_g= l-2m$;
	\item if $l>2m$, then $\ord\kappa_g\geq 1$,
	and $\ord\kappa_g=1$ is equivalent to
	\[
	\begin{dcases}
	(l-1)!\kappa_t(0) \omega_{m,m+1}(0)-m!(m+1)!\tilde\kappa^{(l-2)}(0)\ne0 & (\text{if  $l=2m+1$}),\\ 
	\kappa_t(0)\omega_{m,m+1}(0)\ne0 & (\text{if $l>2m+1$});
	\end{dcases}
	\]
	\item if $l=m$,
	then $\ord\kappa_g\geq1-m$, and
	$\ord\kappa_g=1-m$ if and only if
	$\tilde\kappa^{(l-1)}(0)\ne0$.
	\end{itemize}
	For $\kappa_n$,
	it holds that $\ord\kappa_n\geq1-m$, and
	$\ord\kappa_n=1-m$ if and only if
	$\omega_{m,m+1}(0)\ne0$.
	For $\tau_g$, 
	
	\begin{itemize}
	\item if $l< 2m$, then 
	$\ord\tau_g\geq l-2m+1$, and
	$\ord\tau_g=l-2m+1$ is equivalent to 
	\[\begin{dcases}
	\omega_{m,m+1}(0)\ne0 & (\text{if $l< 2m-1$}),\\
	m(l-1)!\kappa_t(0)+{(m-1)!^2}\,\tilde\kappa^{(l-2)}(0)\, \omega_{m,m+1}(0)\ne0& 
	(\text{if $l= 2m-1$});
	\end{dcases}
	\]
	\item if $l\geq2m$, then 
	$\ord\tau_g\geq 0$, and
	$\ord\tau_g=0$ if and only if $\kappa_t(0)\ne0$.
	\end{itemize}
	
	{\rm (2)} The case $m/2<l<m$.\\
	For this case, it holds that $\ord\kappa_g=m-2l$, $\ord\kappa_n\geq m-2l+1$, and
	$\ord\kappa_n=m-2l+1$ 
	is equivalent to 
	\[\begin{dcases}
	\omega_{m,m+1}(0)\ne0 & (\text{if $l>(m+1)/2$}),\\
	(m+1)!(m-l+1)\kappa_\nu(0) (\tilde\kappa^{(l-2)})^2(0)+2l!^2 \omega_{m,m+1}(0)\ne0 & (\text{if $l=(m+1)/2$}).
	\end{dcases}\]
	For $\tau_g$,
	it holds that $\ord\tau_g\geq1-l$, and
	$\ord\tau_g=1-l$ 
	is equivalent to $\omega_{m,m+1}(0)\ne0$.
	
	{\rm (3)} The case $l\leq m/2$.\\
	In this case, it holds that $\ord\kappa_g\geq0$, and
	$\ord\kappa_g=0$ is equivalent to 
	
	\[\begin{dcases}
	\kappa_s(0)\ne0 & (\text{if $l<m/2$}),\\
	m!\kappa_s(0)(\tilde\kappa^{(l-2)})^2(0)+2l!^2\ne0 & (\text{if $l=m/2$}).
	\end{dcases}\]
	For $\kappa_n$ it holds that $\ord \kappa_n\geq0$, and
	$\ord\kappa_n=0$ if and only if $\kappa_n(0)\ne0$.
	For $\tau_g$,
	it holds that $\ord\tau_g\geq1-l$, and
	$\ord\tau_g=1-l$ if and only if $\omega_{m,m+1}(0)\ne0$.
\end{theorem}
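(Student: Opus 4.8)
The plan is to compute everything in the normal form \eqref{eq:edgenormal1} provided by Proposition \ref{prop:edgenormal}. Write $f=(f_1,f_2,f_3)$ with $f_1=u$, $f_2=u^2a(u)/2+v^m/m!$ and $f_3=u^2b_0(u)/2+v^mb_m(u,v)/m!$, $b_m(0,0)=0$. By the discussion preceding the statement the curve may be taken as $\tilde\gamma(t)=(t^lc(t),t)$ with $c(0)\ne0$, and I set $\hat\gamma=f\circ\tilde\gamma$. The geometric invariants are encoded in the normal-form coefficients by $\kappa_s(0)=a(0)$, $\kappa_\nu(0)=b_0(0)$, $\kappa_t(0)=(b_m)_u(0,0)$, $\omega_{m,m+1}(0)=(m+1)(b_m)_v(0,0)$ (as recorded after Propositions \ref{lem:ktindep} and \ref{lem:cuspialbias}), and the curvature written in the normal form by $\tilde\kappa^{(l-2)}(0)=-l!\,c(0)$, $\tilde\kappa^{(l-1)}(0)=-(l+1)!\,c'(0)$. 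Since $\kappa_g,\kappa_n,\tau_g$ are rational expressions built from $\hat\gamma',\hat\gamma'',\nu,\nu'$ and $|\hat\gamma'|$, it is enough to determine the orders in $t$ of the numerators and of $|\hat\gamma'|$ and to apply the additivity of $\ord$.

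First I would expand, as Taylor series in $t$, the vectors $\hat\gamma$, $\hat\gamma'$, $\hat\gamma''$ and the unit normal $\nu=\hat\nu/|\hat\nu|$ with $\hat\nu=f_u\times\psi$ and $\psi=(0,1,b_m+\tfrac{v}{m}(b_m)_v)$ from Corollary \ref{cor:div}; here $\hat\nu(0)=(0,0,1)$ and, for instance, $\nu_2\sim-\tfrac{m+1}{m}(b_m)_v(0,0)\,t$. The point is that the $u$-direction produces terms on the scale $t^l$ while the $v$-direction produces terms on the scale $t^m$, and the interplay of these two scales governs everything. Inspecting the leading orders of the components of $\hat\gamma'$ shows that the order $k$ of $\hat\gamma$ as a singular curve (so that $\hat\gamma'=t^{k-1}\rho$, $\rho(0)\ne0$) equals $m$ when $l\ge m$ and equals $l$ when $l<m$; this fixes $\ord|\hat\gamma'|=k-1$, hence the order of every denominator, and splits the argument into the three regimes of the statement.

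With these expansions in hand I would read off the orders of $\det(\hat\gamma',\hat\gamma'',\nu)$, $\inner{\hat\gamma''}{\nu}$ and $\det(\hat\gamma',\nu,\nu')$ in each regime. For example, when $l\ge m$ the two contributions of order $m-1$ coming from $f_2$ and $f_3$ combine to give $\inner{\hat\gamma''}{\nu}=\tfrac{\omega_{m,m+1}(0)}{m!}\,t^{m-1}+O(t^m)$, so that $\ord\kappa_n=(m-1)-2(m-1)=1-m$ exactly when $\omega_{m,m+1}(0)\ne0$, whereas in general $\ord\kappa_n\ge1-m$; the analogous lower bounds and the generic equalities for $\kappa_g$ and $\tau_g$ follow from the minimal possible orders of their numerators together with the recorded values of $\kappa_s,\kappa_\nu,\kappa_t,\omega_{m,m+1}$.

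The hard part will be the borderline exponents $l=2m+1$, $l=2m-1$, $l=(m+1)/2$ and $l=m/2$. There the $t^l$- and $t^m$-contributions fall in the same order, so the leading coefficient of the relevant numerator is a genuine linear combination of two invariants and may vanish; to decide whether $\ord$ jumps I would expand one order further and isolate that coefficient exactly. This is what produces the mixed nondegeneracy conditions such as $(l-1)!\kappa_t(0)\omega_{m,m+1}(0)-m!(m+1)!\tilde\kappa^{(l-2)}(0)\ne0$, $m(l-1)!\kappa_t(0)+(m-1)!^2\,\tilde\kappa^{(l-2)}(0)\,\omega_{m,m+1}(0)\ne0$ and $(m+1)!(m-l+1)\kappa_\nu(0)(\tilde\kappa^{(l-2)})^2(0)+2\,l!^2\omega_{m,m+1}(0)\ne0$. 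Carrying out this cancellation bookkeeping, namely tracking the $t^l$- and $t^m$-terms simultaneously and re-expressing the surviving coefficients through $\kappa_s,\kappa_\nu,\kappa_t,\omega_{m,m+1}$ and the derivatives of $\tilde\kappa$, is the genuine computational obstacle; the remaining non-borderline cases are routine order counts once $k$ is fixed.
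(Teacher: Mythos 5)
Your proposal follows essentially the same route as the paper's proof: reduce to the normal form \eqref{eq:edgenormal1}, parametrize $\gamma(t)=(t^lc(t),t)$, translate the invariants into the normal-form coefficients via $\kappa_s(0)=a(0)$, $\kappa_t(0)=(b_m)_u(0,0)$, $\omega_{m,m+1}(0)=(m+1)(b_m)_v(0,0)$, $\tilde\kappa^{(l-2)}(0)=-l!\,c(0)$, observe that $\hat\gamma'=t^{k-1}\rho$ with $k=m$ for $l\ge m$ and $k=l$ for $l<m$, and then count orders of $\det(\hat\gamma',\hat\gamma'',\nu)$, $\inner{\hat\gamma''}{\nu}$ and $\det(\hat\gamma',\nu,\nu')$ against $|\hat\gamma'|$, with the borderline exponents $l=2m\pm1$, $l=(m+1)/2$, $l=m/2$ handled by extracting the next coefficient (exactly the paper's case analysis with its $A_i$, $B_i$, $C_i$). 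The sample check you do carry out ($\ord\kappa_n=1-m$ iff $\omega_{m,m+1}(0)\ne0$ for $l\ge m$) agrees with the paper; the only shortfall is that the leading coefficients in the borderline cases — which are the substance of the mixed nondegeneracy conditions — are announced rather than derived, so the plan is sound but the decisive bookkeeping remains to be executed.
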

{If $m$ is even and $(u,v)$ be a coordinate system 
which gives the form \eqref{eq:edgenormal1}, then $(u,-v)$ also
gives \eqref{eq:edgenormal1}.
In this case, changing $(u,v)$ to $(u,-v)$, the signs of 
$\tilde\kappa$ and $\omega_{m,m+1}$ reverse, 
and them of $\kappa_t$ and $\kappa_s$ do not change.
So, when $m$ is even, neither the condition
\[
\begin{aligned}
(l-1)!\kappa_t(0) \omega_{m,m+1}(0)-m!(m+1)!\tilde\kappa^{(l-2)}(0)&\ne0,\\
m(l-1)!\kappa_t(0)+(m+1)!^2\,\tilde\kappa^{(l-2)}(0)\, \omega_{m,m+1}(0)&\ne0
\quad\text{nor}\\
m!\kappa_s(0)(\tilde\kappa^{(l-2)})^2(0)+2l!^2&\ne0
\end{aligned}
\]
changes under the coordinate change $(u,v)$ to $(u,-v)$.
\begin{proof}	
Let $\hat\gamma=f\circ\gamma$. One can
assume that $f$ is given by the form \eqref{eq:edgenormal1}
	and, since $\partial v$ is a null vector of $f$, 
one  can take  $\gamma(t)=(x(t),t)$, with  $x(0)=x'(0)=0$.
Then $x(t)$ is of  order $l$ and we set $\gamma(t)=(t^lc(t),t)$ ($c(0)\ne0$).

 In the normal form 
(2.5), since $b_m(0,0)=0$, further we may 
assume $f$ is given by
$f(u,v)=
(u,u^2a(u)/2+v^{m}/m!,
u^2b_0(u)/2
+(v^m/m!)(ub_{m1}(u)+vb_{m2}(u,v)))
$. 
We recall that $\kappa_s(0)= a(0), \kappa_\nu(0) = b(0)$ and 
$\kappa_t(0)=(b_m)_u(0,0)$. 
Furthermore, it holds that $\kappa_t(0)=b_{m1}(0)$, 
$\omega_{m,m+1}= (m+1)b_{m2}(0,0)$,
$\tilde\kappa^{(l-2)}(0) = -l!c(0)$ and $\tilde\kappa^{(l-1)}(0) = -(l+1)!c'(0)$.
	We set $\phi$ by $f_v=v^{m-1}\phi/(m-1)!$.
	Then $\tilde\nu_2=f_u\times \phi$ gives a non-zero normal vector field to $f$.
	\medskip

\noindent (1).
Assume $l \geq m$.
By \eqref{eq:edgenormal1} we get $\hat\gamma = t^m \tilde\rho$, where
\[
\begin{aligned}
\tilde\rho(t) &=(t^{l - m} c(t), g_2(t), tg_3(t)),\\
g_2(t) &=
\dfrac{m!t^{2 l-m} a(t) c(t)^2+2}{2 m!},\\
g_3(t) &=
\dfrac{m!t^{2 l-m-1} b_0(t) c(t)^2
+2b_{m2}(t)+2t^{l-1} b_{m1}(t) c(t)}{2 m!}.
\end{aligned}
\]
Then $\hat\gamma = t^{m-1} \rho$, where $\rho = m\tilde\rho+t\tilde\rho'$. Note that $\rho (0) \neq 0$.
Setting $\nu_2(t)=\tilde\nu_2(\gamma(t))$, we can show that 
 $\nu_2(t)=(t^m d(t),te(t),1)$, where
\small
\begin{align}
\label{eq:d0100}
d(t)&=\dfrac{1}{2 m m!}
 \bigg(-2 m  b_{m1}  + 
   2 m t^{2 l - m}
     a b_{m1} c^2  m! 
- 2 m t^{l - m} b_0 c  m! \\
   &\hspace{20mm}+ 
   2 t^{1 + l - m}
     a b_{m2} c  m! + 
   2 m t^{1 + l - m}
     a b_{m2} c  m! + 
   m t^{3 l - m}
     b_{m1} c^3 m! a' \nonumber\\
   &\hspace{20mm}+
    t^{1 + 2 l - m}
     b_{m2} c^2 m! a' + m t^{1 + 2 l - m}
     b_{m2} c^2  m! a' - m t^{2 l - m}
     c^2 m! b_0' \nonumber\\
   &\hspace{20mm}+ 
   2 t^{2 + l - m} a c  m! 
b_{m2,v} 
+ 
   t^{2 + 2 l - m} c^2 m! a' 
b_{m2,v} - 2 m t^l c 
b'_{m1} - 2 m t 
b_{m2,u}\bigg),\nonumber\\
e(t)&=\dfrac{-1}{m}
\Big(m t^{l-1} b_{m1} c + (1+m)b_{m2}  + 
  t
b_{m2,v}\Big).\nonumber
\end{align}
\normalsize
We abbreviate the variable, namely $a=a(t)$, $b_{m2}=b_{m2}(\gamma(t))$ 
for instance, and $(b_{m2})_v=b_{m2,v}$.
Here, we see
\[
\begin{aligned}
&g_2(0)=\dfrac{1}{m!},\ g_2'(0)=0,\ 
g_3(0)=\dfrac{b_{m2}(0)}{m!},\ d(0)=\dfrac{-b_{m1}(0)}{m!}\ (\text{if}\ m<l),\\
&
d(0)=\dfrac{-m!b_0(0)c(0)-b_{m1}(0)}{m!}\ (\text{if}\ m=l),\ 
e(0)=-\dfrac{(m+1)b_{m2}(0)}{m}.
\end{aligned}
\]
To see the rational order of the invariants $\kappa_g, \kappa_n$, $\tau_g$ at $0$, 
we may use $\nu_2(t)$ instead of $\nu\circ\gamma(t)$
in \eqref{eq:reg:normaliz:curv}.
Since $g_2'(0)=0$, we can write $g_2'= t\tilde{g}$. We see
\begin{align}
\rho&=(lt^{l-m}c+t^{l-m+1}c',mg_2+t^2\tilde g_2,(m+1)tg_3+t^2g_3'),\label{eq:rho0200}\\
\rho'&=
\begin{dcases}
\Big(l(l-m)t^{l-m-1}c+t^{l-m}O(1),\\
\hspace{20mm} tO(1),(m+1)g_3+tO(1)\Big)\quad (l>m)\\
\Big((m+1) c'+tO(1),\\
\hspace{20mm} tO(1),(m+1)g_3+tO(1)\Big)\ (m=l),
\end{dcases}\label{eq:rho0300}
\end{align}
where $O(1)$ means a smooth function depending on $t$.
Then we see $|\rho,\rho',\nu_2|$, where $|\cdot|=\det(\cdot)$ is, for $l >m$: 
\begin{equation}\label{eq:kgt0100}
\vmt{
lt^{l-m}c+t^{l-m+1}O(1)
&
l(l-m)t^{l-m-1}c+t^{l-m}O(1)
&
t^md\\
mg_2+tO(1)&tO(1)&te\\
(m+1)tg_3+t^2O(1)&(m+1)g_3+tO(1)&1}.
\end{equation}
If $2m-l+1>0$, then \eqref{eq:kgt0100} is
$t^{l-m-1}A_1(t)$, where
\[
A_1(0)=
\vmt{
0&l(l-m)c(0)&0\\
mg_2(0)&0&0\\
0&(m+1)g_3(0)&1}=-\dfrac{l(l-m)}{(m-1)!}c(0).
\]
 If $2m-l+1=0$, then  \eqref{eq:kgt0100} is
$t^{m}A_2(t)$, where 
\[
\begin{aligned}
A_2(0)&=
\vmt{
0&l(m+1)c(0)&d(0)\\
mg_2(0)&0&0\\
0&(m+1)g_3(0)&1}=
-m (m+1) g_2(0)(lc-d g_3)(0)\\
&=
-\dfrac{m+1}{(m-1)!}
\left(
\dfrac{b_{m1} b_{m2}}{(m!)^2}+lc
\right)(0).
\end{aligned}
\]
If $2m-l+1<0$, then $l>m$ and \eqref{eq:kgt0100} is $t^mA_3(t)$, where
\[
\begin{aligned}
A_3(0)&=
\vmt{
0&0&d(0)\\
mg_2(0)&0&0\\
0&(m+1)g_3(0)&1}=m(m+1)d(0)g_2(0)g_3(0)\\
&=
-\dfrac{m (1+m) }{(m!)^3}b_{m1}(0) b_{m2}(0).
\end{aligned}
\]
If $m=l$, by \eqref{eq:rho0200} and \eqref{eq:rho0300},
we see the assertion, once $\ord|t|^{m-1}=m-1$ and $\ord |\rho|^3 = 0$.
These shows the assertion for $\kappa_g$.

Since one can easily see that $\inner{\rho'}{\nu_2}=(m+1)b_{m2}/m!$ at $0$,
the assertion for $\kappa_n$ is proved.
Next we see $|\rho,\nu_2,\nu_2'|$ is
\begin{equation}\label{eq:tgt0100}
\vmt{
lt^{l-m}c+t^{l-m+1}O(1)
&
t^md
&
mt^{m-1}d+t^mO(1)\\
mg_2+tO(1)&te&e+tO(1)\\
(m+1)tg_3+t^2O(1)&1&0}.
\end{equation}
If $2m-l-1>0$, then \eqref{eq:tgt0100} is
$t^{l-m}B_1(t)$, where
\[
B_1(0)=
\vmt{
lc(0)&0&0\\
mg_2(0)&0&e(0)\\
0&1&0}=-lc(0)e(0)=\dfrac{l(m+1)}{m}b_{m2}(0)c(0).
\]
If $2m-l-1=0$, then $m<l$ and \eqref{eq:tgt0100} is
$t^{m-1}B_2(t)$, where 
\[
\begin{aligned}
B_2(0)&=
\vmt{
lc(0)&0&md(0)\\
mg_2(0)&0&e(0)\\
0&1&0}=(-lce + m^2 d g_2)(0)\\
&=
-\dfrac{b_{m1}(0)}{(m-1)!^2}+\dfrac{l(m+1) b_{m2}(0) c(0)}{m}.
\end{aligned}
\]
If $2m-l-1<0$, then $m<l$, and \eqref{eq:tgt0100} is
$t^{m-1}B_3(t)$, where 
\[
\begin{aligned}
B_3(0)&=
\vmt{
0&0&md(0)\\
mg_2(0)&0&e(0)\\
0&1&0}=m^2d(0)g_2(0)=
-\dfrac{b_{m1}(0)}{(m-1)!^2}\,.
\end{aligned}
\]
This show the assertion for $\tau_g$.
\medskip

\noindent (2) and (3). We assume $l < m$ and we shall use 
the same notation of  case (1).
Setting $\nu_2(t)=\tilde\nu_2(\gamma(t))$, we can show that $ \nu_2(t)=(t^l d(t),te(t),1)$, where
\small
\[
\begin{aligned}
d(t)&=\dfrac{1}{2 m m!}
 \bigg(-2 m t^{m-l} b_{m1} + 
   2 m m!  t^{l}
     a b_{m1} c^2
- 2 mm!  b_0 c   +
   2 m! t
     a b_{m2} c  +  2 m m! t
     a b_{m2} c   \\ 
        &\hspace{20mm}+ 
   m m! t^{2 l}
     b_{m1} c^3  a' +  m!  t^{l+1}
     b_{m2} c^2 a' +
     m  m! t^{l+1}
     b_{m2} c^2 a' - m t^{l} c^2 m! b_0' \\
   &\hspace{20mm}+ 
 2 m!  t^{2} a c  
b_{m2,v} +
  m!  t^{l+2} c^2  a' 
b_{m2,v} - 2 m t^m c 
b'_{m1}- 2 m t^{m-l+1} 
b_{m2,u}\bigg)\end{aligned}
\]
\normalsize
and $e$ is the same as in \eqref{eq:d0100}.
We assume $l\leq m/2$.
Then $\hat\gamma = t^l \tilde\rho$, where $\tilde\rho(t)=(c(t), t^lg_2(t)$, $t^lg_3(t))$ and
\[\begin{aligned}
g_2(t)&=
\dfrac{2 t^{m-2l} + m!a(t) c(t)^2 }{2 m!},
\\
g_3(t)&=
\dfrac{2 t^{m-l} b_{m1}(t) c(t) +2 t^{m-2l+1} b_{m2}(\gamma(t)) 
+m! b_0(t) c(t)^2 }{2  m!}\, .
\end{aligned}
\]

Then $\hat \gamma' = t^l\rho$, where 
$\rho = l\tilde\rho+t\tilde\rho'$, with $\rho(0)\ne0$. Since $\hat \gamma$ has multiplicity $l$, we need replace $m-1$ in equations \eqref{eq:reg:normaliz:curv} by $l-1$.
Here, we see
\[
\begin{aligned}
&g_2(0)=a(0)c(0)^2/2\  (\text{if } l<m/2),\ 
g_2(0)=a(0)c(0)^2/2+1/m!\  (\text{if }m=2l),\\
&g_3(0)=b_0(0) c(0)^2/2,\ 
e(0)=-(1+m) b_{m2}(0)/m\,.
\end{aligned}
\]
To see the order, we may use $\nu_2(t)$ instead of $\nu\circ\gamma(t)$
in \eqref{eq:reg:normaliz:curv}.
We see
\begin{equation}\label{eq:rho1000}
\begin{aligned}
\rho&=\Big(lc+tc',t^l(2lg_2+tg_2'),t^l(2lg_3+tg_3')\Big),\\
\rho'&=\Big((l+1)c'+tO(1),t^{l-1}(2l^2g_2+tO(1)),t^{l-1}(2l^2g_3+tO(1))\Big).
\end{aligned}
\end{equation}
By applying the formula
\[
\vmt{
x_{11}&x_{12}&x_{13}\\
kx_{21}&x_{22}&x_{23}\\
kx_{31}&x_{32}&x_{33}}
=
\vmt{
x_{11}&kx_{12}&kx_{13}\\
x_{21}&x_{22}&x_{23}\\
x_{31}&x_{32}&x_{33}},
\]
for $k=t^{l-1}$,
we see $|\rho,\rho',\nu_2|$ is
\begin{align}
 &\vmt{
lc+tO(1)&(l+1)c'+tO(1)&t^ld\\
t^l(2lg_2+tO(1))&t^{l-1}(2l^2g_2+tO(1))&te\\
t^l(2lg_3+tO(1))&t^{l-1}(2l^2g_3+tO(1))&1}\nonumber\\
=&
\vmt{
lc+tO(1)&t^{l-1}(l+1)c'+tO(1)&t^{2l-1}d\\
t(2lg_2+tO(1))&t^{l-1}(2l^2g_2+tO(1))&te\\
t(2lg_3+tO(1))&t^{l-1}(2l^2g_3+tO(1))&1}\label{eq:kgt1100}\\
=&t^{l-1}C_1(t)\quad
\left(C_1(t)=
\vmt{
lc+tO(1)&(l+1)c'+tO(1)&t^{2l-1}d\\
t(2lg_2+tO(1))&2l^2g_2+tO(1)&te\\
t(2lg_3+tO(1))&2l^2g_3+tO(1)&1}\right).\nonumber
\end{align}
Then $C_1(0)=2l^3c(0)g_2(0)$. 
This shows the assertion for $\kappa_g$.
By \eqref{eq:rho1000}, we see $\inner{\rho'}{\nu_2} = t^{l-1}(2l^2g_3+tO(1))$
and
$|\rho,\nu_2,\nu_2'|(0)=l(m+1)c(0)b_{m2}/m$.
This shows the assertions for $\kappa_n$ and $\tau_g$.

Next we assume $l > m/2$.
In this case, $\hat\gamma=t^l (c(t), t^{m-l}g_2(t), t^{m-l+1}g_3(t))$.
We set $\tilde\rho(t)=(c(t), t^{m-l}g_2(t), t^{m-l+1}g_3(t))$ and
\[
\begin{aligned}
g_2(t)&=
\dfrac{2+m!\,t^{2l-m}a(t) c(t)^2 }{2m!},
\\
g_3(t)&=
\dfrac{2 t^{l-1} b_{m1}(t) c(t) +2 b_{m2}(\gamma(t)) 
+ m!\,t^{2l-m-1}b_0(t) c(t)^2}{2 m!}.
\end{aligned}
\]
Here, it holds that
\[
\begin{aligned}
&g_2(0)=1/m!,\ 
e(0)=-(1+m) b_{m2}(0)/m,\\
&g_3(0)=b_{m2}(0)/m!\ (\text{if }2l-m-1>0),\\
&g_3(0)=b_0(0)c(0)^2/2+b_{m2}(0)/m!\ (\text{if }m=2l-1).
\end{aligned}
\]
It holds that $\hat \gamma'= t^{l-1}\rho$, with $\rho=l\tilde\rho+t\tilde\rho'$ and 
 $\rho(0)\ne0$.
We see
\begin{equation}\label{eq:rho10000}
\begin{aligned}
\rho(t)&=\Big(lc+tc',t^{m-l}(mg_2+tg_2'),t^{m-l+1}((m+1)g_3+tg_3')\Big),\\
\rho'(t)&=\Big(
(l+1)c'+tO(1),t^{m-l-1}(m(m-l)g_2+tO(1)),\\
&\hspace{50mm}
t^{m-l}((m+1)(m-l+1)g_3+tO(1))
\Big).
\end{aligned}
\end{equation}
By the similar method to \eqref{eq:kgt1100},
we see $|\rho,\rho',\nu_2|$ is
\[\begin{aligned}
&\vmt{
lc+tO(1)&(1+l)c'+tO(1)&t^l d\\
t^{m-l}(m g_2+tO(1))&t^{m-l-1} (m (m-l) g_2+tO(1))&t e\\
t^{m-l+1}((m+1) g_3+tO(1))&t^{m-l} ((m+1)(m-l+1) g_3+tO(1))&1}\\
=&
\vmt{
lc+tO(1)&t^{m-l-1}((1+l)c'+tO(1))&t^{m-1} d\\
t(m g_2+tO(1))&t^{m-l-1} (m (m-l) g_2+tO(1))&t e\\
t^2((m+1) g_3+tO(1))&t^{m-l} ((m+1)(m-l+1) g_3+tO(1))&1}\\
=&t^{m-l-1}C_2(t),\\
C_2(t)=&\vmt{
lc+tO(1)&(1+l)c'+tO(1)&t^{m-1} d\\
t(m g_2+tO(1))&m (m-l) g_2+tO(1)&t e\\
t^2((m+1) g_3+tO(1))&t((m+1)(m-l+1) g_3+tO(1))&1}.
\end{aligned}
\]
Then 
$
C_2(0)=
l(m-l) m c(0) g_2(0)=
l(m-l)c(0)/(m-1)!$
and, replacing $m-1$ by $l-1$ in equations \eqref{eq:reg:normaliz:curv}, 
this shows the assertion for $\kappa_g$.
By \eqref{eq:rho1000}, we see 
$\inner{\rho'}{\nu_2} = t^{m-l}C_3(t)$, where $C_3(t) = m(m-l)g_2(t)e_2(t) + (m+1)(m-l+1)g_3(t) + tO(1)$.  It holds that 
$$
C_3(0)
=\begin{dcases}
\dfrac{(m+1)b_{m2}(0)}{m!}\ &(m<2l-1),\\
(m+1)\left(\dfrac{l b_0(0)c(0)^2}{2(l!)^2}+\dfrac{b_{m2}(0)}{m!}\right) &(m=2l-1).
\end{dcases}
$$
and
$|\rho,\nu_2,\nu_2'|(0)=-lc(0)e(0)
=l (m+1)c(0)b_{m2}(0)/m$.
This shows the assertions for $\kappa_n$ and $\tau_g$.
\end{proof}

In particular, we have the following corollary
on boundedness directly obtained from Theorem \ref{thm:ord1}.

\begin{corollary}\label{cor:bdd}
Let $f:(\R^2,0)\to (\R^3,0)$ be an  $m$-type edge with $m\geq 2$, 
and $\gamma:(\R,0)\to(\R^2,0)$ be a regular curve with order of contact $l\geq2$ with the null direction of $f$ at $0$. 
\begin{enumerate}
\item The case $l\geq m$.
For $\kappa_g$, 
\begin{itemize}
\item if $l\geq 2m$, then $\kappa_g$ is bounded at $0$;
\item if $m< l< 2m$, then $\kappa_g$ is unbounded at $0$;
\item if $m= l$ and $\tilde\kappa^{(l-1)}(0)\ne0$, 
then $\kappa_g$ is unbounded at $0$.
\end{itemize}
For $\kappa_n$, if $\omega_{m,m+1}(0)\ne0$, then $\kappa_n$ is unbounded at $0$.
For $\tau_g$, 
\begin{itemize}
\item if 
$m\leq l<2m-1$ and $\omega_{m,m+1}(0)\ne0$, then $\tau_g$ is unbounded at $0$;
\item if $l=2m-1$ and 
$m(l-1)!\kappa_t(0)
+{(m-1)!^2}\,\tilde\kappa^{(l-2)}(0)\, \omega_{m,m+1}(0)\ne0$, 
then $\tau_g$ is bounded at $0$;
\item if $l>2m-1$, then $\tau_g$ is bounded at $0$.
\end{itemize}

\item
The case $m/2<l<m$.
In this case, $\kappa_g$ is unbounded at $0$.
If $l=(m+1)/2$, then $\kappa_n$ is bounded at $0$.
If $m>l>(m+1)/2$ and $\omega_{m,m+1}(0)\ne0$, then $\kappa_n$ is 
unbounded at $0$.
If $\omega_{m,m+1}(0)\ne0$, then $\tau_g$ is 
unbounded at $0$.

\item
The case $l\leq m/2$.
In this case, $\kappa_g$ and $\kappa_n$ are bounded at $0$.
If $\omega_{m,m+1}(0)\ne0$, then $\tau_g$ is unbounded at $0$.
\end{enumerate}
\end{corollary}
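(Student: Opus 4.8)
The plan is to obtain this corollary as an immediate reading of Theorem \ref{thm:ord1}, so the work is organizational rather than computational. The guiding principle is that along the curve $\hat\gamma=f\circ\gamma$ each of $\kappa_g,\kappa_n,\tau_g$ is a function of the single parameter $t$, so its rational order is the ordinary order in $t$ (the case $i=1$ in the definition of order). For a one-variable function-germ, having order $\geq0$ is equivalent to being bounded at $0$, whereas a strictly negative order forces divergence. Hence it suffices to compare, case by case, the orders supplied by Theorem \ref{thm:ord1} with $0$.

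First I would treat case (1), $l\geq m$. For $\kappa_g$: when $m<l\leq2m$ the theorem gives $\ord\kappa_g=l-2m$, which equals $0$ at $l=2m$ and is negative for $m<l<2m$; when $l>2m$ it gives $\ord\kappa_g\geq1>0$; and at $l=m$ with $\tilde\kappa^{(l-1)}(0)\neq0$ it gives $\ord\kappa_g=1-m<0$ since $m\geq2$. These three facts produce exactly the three bullet points for $\kappa_g$. For $\kappa_n$ the theorem gives $\ord\kappa_n=1-m<0$ precisely when $\omega_{m,m+1}(0)\neq0$, yielding unboundedness. For $\tau_g$ the theorem gives $\ord\tau_g=l-2m+1<0$ under $\omega_{m,m+1}(0)\neq0$ when $l<2m-1$; at $l=2m-1$ it gives $\ord\tau_g\geq0$ (exactly $0$ under the displayed nonvanishing condition), hence bounded; and for $l\geq2m$ it gives $\ord\tau_g\geq0$, hence bounded. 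Here I would note that since $l$ is an integer, the inequality $l>2m-1$ coincides with $l\geq2m$.

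Next I would handle cases (2) and (3) identically. For $m/2<l<m$ the theorem gives $\ord\kappa_g=m-2l$, which is negative because $2l>m$, so $\kappa_g$ is always unbounded. For $\kappa_n$ the lower bound $\ord\kappa_n\geq m-2l+1$ equals $0$ at $l=(m+1)/2$, giving boundedness there unconditionally; for $(m+1)/2<l<m$ one has $m-2l+1<0$, and the equality $\ord\kappa_n=m-2l+1$ holds exactly when $\omega_{m,m+1}(0)\neq0$, giving unboundedness under that hypothesis. For $\tau_g$ the theorem gives $\ord\tau_g=1-l<0$ (as $l\geq2$) under $\omega_{m,m+1}(0)\neq0$. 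Finally, for $l\leq m/2$ the theorem gives $\ord\kappa_g\geq0$ and $\ord\kappa_n\geq0$, hence both bounded, while $\ord\tau_g=1-l<0$ under $\omega_{m,m+1}(0)\neq0$ gives the remaining assertion.

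Since the entire argument reduces to comparing the orders of Theorem \ref{thm:ord1} with zero, there is no genuine obstacle. The only care needed is to keep straight the two types of statements in that theorem: those providing merely a lower bound on the order, which suffice to conclude \emph{boundedness}, and those providing the exact order together with a nonvanishing condition, which are exactly what is required to conclude \emph{divergence}; the integer parity of $l$ must also be used so that $l>2m-1$ is read as $l\geq2m$.
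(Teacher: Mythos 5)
Your proposal is correct and is exactly the paper's route: the paper offers no separate argument, stating only that the corollary is ``directly obtained from Theorem \ref{thm:ord1},'' and your case-by-case comparison of the orders with zero (using a lower bound on the order for boundedness and an exact negative order, guaranteed by the nonvanishing hypotheses, for divergence) is the intended reading. Your side remarks --- that $l>2m-1$ means $l\geq 2m$ for integer $l$, and that at $l=2m-1$ the bound $\ord\tau_g\geq 0$ already gives boundedness --- are accurate.
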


We consider the case that 
$f\colon(\R^2,0)\to(\R^3,0)$ is
a cuspidal edge. 
By definition, it is a $(2,3)$-edge, in particular, a $2$-type edge. 
Then by Theorem \ref{thm:ord1}, 
the following assertion holds.

\begin{corollary}\label{thm:ordce}
Let  $f:(\R^2,0) \to (\R^3, 0)$ be a cuspidal edge,
and let $\gamma:(\R,0)\to (\R^2, 0)$  
be a regular curve with order of contact $l \geq 2$ with 
the null direction of $f$ at $0$ and $\tilde\kappa$ 
the curvature of $\gamma$ written in the normal form of $f$.
Then, it holds that:

For $\kappa_g$, 
	\begin{itemize}
	\item if $l=2$,
	then $\ord\kappa_g\geq-1$, and
	$\ord\kappa_g=-1$ if and only if
	$\tilde\kappa^{(l-1)}(0)\ne0$.
	\item if $l=3$ or $4$, then
	$\ord\kappa_g= l-4$;
	\item if $l\geq5$, then $\ord\kappa_g\geq 1$,
	and $\ord\kappa_g=1$ is equivalent to
	\[
	\begin{dcases}
	(l-1)!\kappa_t(0) \omega_{2,3}(0)-12\tilde\kappa^{(l-2)}(0)\ne0 & 
(\text{if  $l=5$}),\\ 
	\kappa_t(0)\omega_{2,3}(0)\ne0 & (\text{if $l>5$});
	\end{dcases}
	\]
	\end{itemize}

For $\kappa_n$,
	it holds that $\ord\kappa_n=-1$.

For $\tau_g$, 
	\begin{itemize}
	\item if $l=2$ or $3$, then 
	$\ord\tau_g\geq l-3$, and
	$\ord\tau_g=l-3$ is equivalent to 
	\[\begin{dcases}
	\omega_{2,3}(0)\ne0 & (\text{if $l< 3$}),\\
	2(l-1)!\kappa_t(0)+\tilde\kappa^{(l-2)}(0)\, \omega_{2,3}(0)\ne0& 
	(\text{if $l= 3$});
	\end{dcases}
	\]
	\item if $l\geq4$, then 
	$\ord\tau_g\geq 0$, and
	$\ord\tau_g=0$ if and only if $\kappa_t(0)\ne0$.
	\end{itemize}
\end{corollary}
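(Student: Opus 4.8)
The plan is to obtain this corollary as a direct specialization of Theorem \ref{thm:ord1} to the case $m=2$. First I would record the structural observation: a cuspidal edge is $\A$-equivalent to $(u,v^2,v^3)$, hence it is a $(2,3)$-type edge and in particular a $2$-type edge with $m=2$. Because the hypothesis demands order of contact $l\geq 2=m$, the curve $\gamma$ always lands in case (1) of Theorem \ref{thm:ord1}; the cases $m/2<l<m$ and $l\leq m/2$ are vacuous here, since they would force $l<2$. So only the $l\geq m$ branch of Theorem \ref{thm:ord1} is relevant, and the proof reduces to setting $m=2$ throughout.

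Next I would carry out the substitution $m=2$ in the formulas of case (1), tracking the thresholds and factorials. For $\kappa_g$ the cutoff $2m=4$ produces the three regimes $l=2$ (where $l=m$, giving $\ord\kappa_g\geq -1$ with equality iff $\tilde\kappa^{(l-1)}(0)\neq 0$), $l\in\{3,4\}$ (where $m<l\leq 2m$, so $\ord\kappa_g=l-2m=l-4$), and $l\geq 5$ (where $l>2m$). In the borderline subcase $l=2m+1=5$ the coefficient $m!(m+1)!$ equals $2\cdot 6=12$, yielding the stated condition $(l-1)!\kappa_t(0)\omega_{2,3}(0)-12\,\tilde\kappa^{(l-2)}(0)\neq 0$. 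For $\tau_g$ the same cutoff splits into $l\in\{2,3\}$ and $l\geq 4$; in the subcase $l=2m-1=3$ one has $(m-1)!^2=1$ and $m(l-1)!=2(l-1)!$, giving $2(l-1)!\kappa_t(0)+\tilde\kappa^{(l-2)}(0)\,\omega_{2,3}(0)\neq 0$, while for $l\geq 4$ one gets $\ord\tau_g\geq 0$ with equality iff $\kappa_t(0)\neq 0$.

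The only place needing an input beyond mechanical substitution is the statement for $\kappa_n$. Theorem \ref{thm:ord1} gives $\ord\kappa_n\geq 1-m=-1$, with equality iff $\omega_{m,m+1}(0)\neq 0$; for a general $2$-type edge this is a genuine condition. Here, however, I would invoke Proposition \ref{prop:type:front}: a cuspidal edge is a $(2,3)$-type edge, hence $\omega_{2,3}(0)\neq 0$ automatically, so the condition always holds and $\ord\kappa_n=-1$ unconditionally. This is the single point where the specific structure of the cuspidal edge (rather than an arbitrary $2$-type edge) is used. I expect no real obstacle in the argument—everything else is a bookkeeping of the index ranges and factorial constants appearing in Theorem \ref{thm:ord1}—so the main care is simply to verify that each case of the corollary matches the corresponding specialized case without an off-by-one error in the thresholds $m$, $2m-1$, $2m$, and $2m+1$.
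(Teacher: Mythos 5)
Your proposal is correct and follows essentially the same route as the paper: the corollary is obtained by specializing Theorem \ref{thm:ord1} to $m=2$ (only case (1) being relevant since $l\geq 2=m$), with the factorial constants $m!(m+1)!=12$ and $(m-1)!^2=1$ checked as you describe, and the unconditional statement $\ord\kappa_n=-1$ coming from the automatic nonvanishing of $\omega_{2,3}$ for a cuspidal edge. The only cosmetic difference is that you derive $\omega_{2,3}(0)\neq 0$ internally from Proposition \ref{prop:type:front} applied to the $(2,3)$-type edge, whereas the paper cites the identification of $\omega_{2,3}$ with the cuspidal curvature $\kappa_c$ and its nonvanishing from the literature; both are valid.
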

\begin{proof}
Since $\omega_{2,3}$ corresponds to the cuspidal curvature $\kappa_c$
and it does not vanish at $0$ (\cite[Proposition 3.11]{msuy}),
we have the assertion by Theorem \ref{thm:ord1}.
\end{proof}
About the boundedness, we have the following corollary 
immideately from Theorem \ref{thm:ordce}.
\begin{corollary}\label{cor:ce}
Under the same assumption of Corollary \ref{thm:ordce},
we have the following:
	\begin{enumerate}
	\item For the geodesic curvature $\kappa_g$, 
	\begin{itemize}
	\item if $l\geq 4$, then $\kappa_g$ is bounded at $0$;
	\item if $l=3$, then $\kappa_g$ is unbounded at $0$;
	\item if $l=2$ and $\tilde\kappa'(0)\ne0$, 
	then $\kappa_g$ is unbounded at $0$.
	\end{itemize}
	\item The normal curvature $\kappa_n$ is unbounded at $0$.
	\item For the geodesic torsion $\tau_g$,
	\begin{itemize}
		\item if $l=2$, then $\tau_g$ is unbounded at $0$;
	\item if $l=3$ and 
{$4 \kappa_t(0)
	+\,\tilde\kappa'(0)\, \kappa_c(0)\ne0$}, 
	then $\tau_g$ is bounded at $0$;
	\item if $l\geq 4$, then $\tau_g$ is bounded at $0$,
	\end{itemize}
where $\kappa_c$ is the cuspidal curvature (cf. \cite{msuy}) corresponding to $\omega_{2,3}$.
	\end{enumerate}
\end{corollary}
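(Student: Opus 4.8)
The plan is to read each boundedness assertion off the corresponding rational-order computation in Corollary~\ref{thm:ordce}, using the elementary principle that an invariant of nonnegative rational order is bounded near the singular point, while one of strictly negative rational order is unbounded there. This principle is immediate from the definition of rational order recalled in Section~\ref{sec:curvatures}: each of $\kappa_g,\kappa_n,\tau_g$ is expressed, via \eqref{eq:reg:normaliz:curv}, as a quotient of the form $g_1/(|g_2|g_3)$ with $\ord = \ord(g_1)-\ord(g_2)-\ord(g_3)$, and a negative value of this difference forces the denominator to vanish strictly faster than the numerator along $\hat\gamma$, producing divergence, whereas a nonnegative value keeps the quotient bounded.

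With this in hand I would simply match each line of the statement to a line of Corollary~\ref{thm:ordce}. For $\kappa_g$: when $l\geq4$ one has $\ord\kappa_g\geq0$ (namely $\ord\kappa_g=0$ for $l=4$ and $\ord\kappa_g\geq1$ for $l\geq5$), so $\kappa_g$ is bounded; when $l=3$ one has $\ord\kappa_g=l-4=-1<0$, so $\kappa_g$ is unbounded; and when $l=2$ the order equals $-1$ precisely when $\tilde\kappa'(0)\neq0$, which yields unboundedness under that hypothesis. For $\kappa_n$ the order is identically $-1$, so $\kappa_n$ is always unbounded. For $\tau_g$: when $l=2$ the order is $-1$ and hence $\tau_g$ is unbounded; when $l=3$ the order is $0$ exactly when $4\kappa_t(0)+\tilde\kappa'(0)\,\kappa_c(0)\neq0$, giving boundedness under that condition; and when $l\geq4$ the order is $\geq0$, so $\tau_g$ is bounded.

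There is essentially no obstacle beyond keeping the translation between orders and boundedness consistent, since every case of the statement corresponds to a single order formula already established. The one point meriting attention is that the general order formulas in Corollary~\ref{thm:ordce} carry the hypothesis $\omega_{2,3}(0)\neq0$; for a cuspidal edge this is automatic because $\omega_{2,3}$ coincides with the cuspidal curvature $\kappa_c$, which never vanishes (as used in the proof of Corollary~\ref{thm:ordce} via \cite{msuy}). This lets me drop that hypothesis throughout and simplify the conditions accordingly, in particular concluding unconditional unboundedness of $\kappa_n$ and of $\tau_g$ in the case $l=2$.
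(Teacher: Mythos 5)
Your proposal is correct and matches the paper's own treatment: the paper gives no separate argument, declaring the corollary immediate from Corollary \ref{thm:ordce}, and your translation of each rational order into a boundedness statement (negative order gives divergence, nonnegative order gives boundedness, with $\omega_{2,3}=\kappa_c\ne0$ used to discharge the nonvanishing hypotheses) is exactly the intended reading. No gaps.
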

Note that $\ord \kappa_g\geq-1$ for $l\geq2$ 
is pointed out in \cite[Proposition 2.19]{dz}. 

We observe that although in the above results we could not guarantee 
that the three invariants are bounded at the same time near a singular point, 
it is easy to find an example where it happens: 
taking $f=(u,\frac{v^2}{2},v^5)$ and $\gamma(t)=(t^4,t)$, it holds that 
$m=2, l=4$, $\ord \kappa_g = 0, \ord \kappa_n = 1, \ord \tau_g = 3$  
(see Figure \ref{fig:curvatures1}). 
Thus these three invariants are bounded at $0$ (cf. Corollary \ref{cor:bdd}).
For the cuspidal edge $f(u,v)=(u,v^2,v^3)$ and the same $\gamma$,
we see that $\kappa_g$ and $\tau_g$ are bounded, 
but $\kappa_n$ is unbounded at $0$ (cf. Corollary \ref{cor:ce}). 
Figure \ref{fig:curvatures2} shows the graphs of these invariants near $0$.

\begin{figure}[ht]
	\centering
	\includegraphics[width=.3\linewidth] {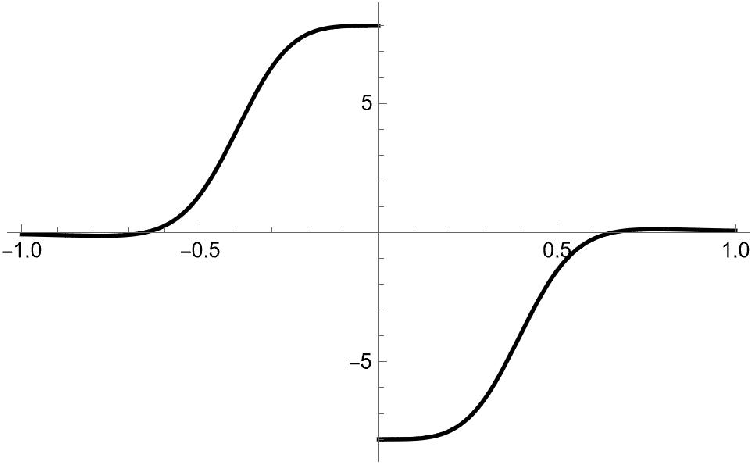}
	\hspace{3mm}
	\includegraphics[width=.3\linewidth]
	{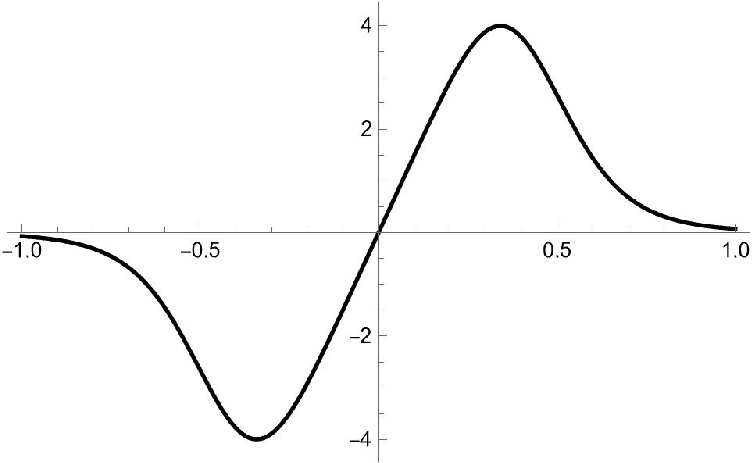}
	\hspace{3mm}
	\includegraphics[width=.3\linewidth]
	{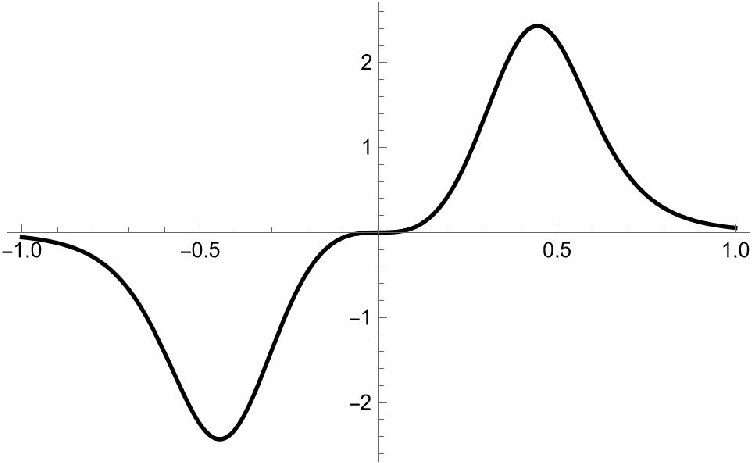}
	\caption{The graphs of $\kappa_g$ (left), $\kappa_n$ (middle) 
		and $\tau_g$ (right) of the curve $\hat{\gamma}(t)=f(\gamma(t))$, 
		where $f=(u,\frac{v^2}{2},v^5)$ and $\gamma(t)=(t^4,t)$.} 
	\label{fig:curvatures1}
\end{figure}

\begin{figure}[ht]
	\centering
	\includegraphics[width=.3\linewidth] {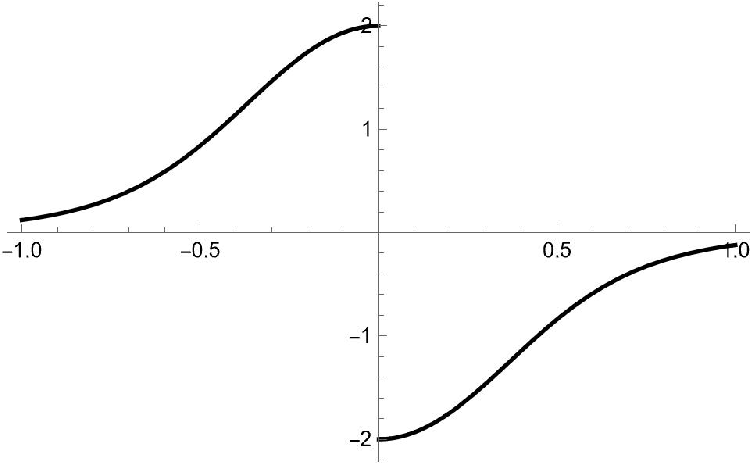}
	\hspace{3mm}
	\includegraphics[width=.3\linewidth]
	{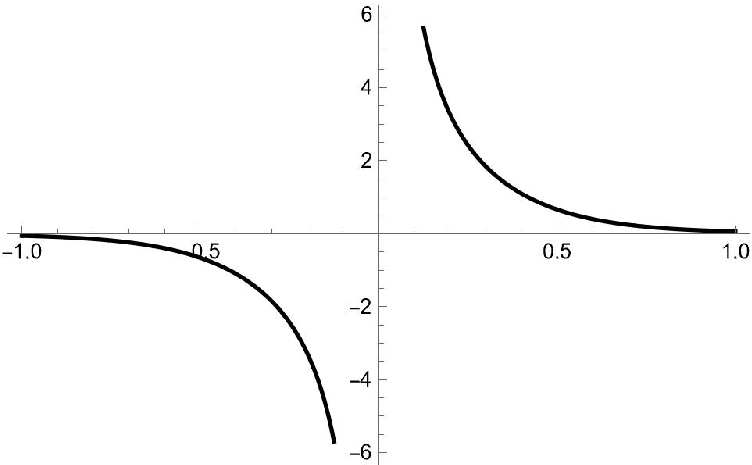}
	\hspace{3mm}
		\includegraphics[width=.3\linewidth]
	{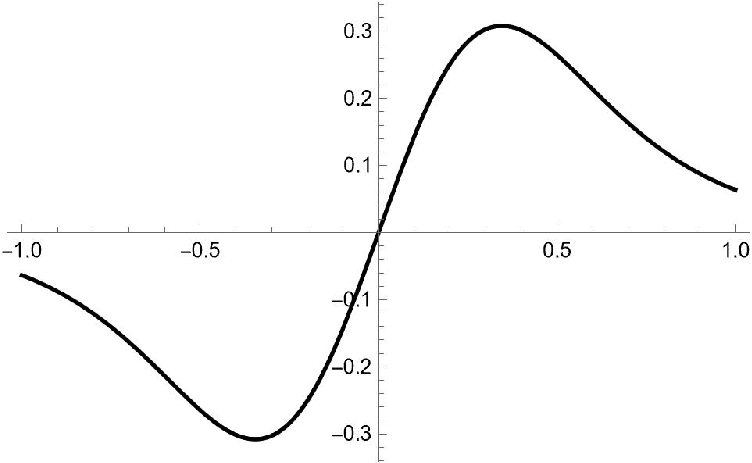}
	\caption{The graphs of $\kappa_g$ (left), $\kappa_n$ (middle) 
		and $\tau_g$ (right) of the curve $\hat{\gamma}(t)=f(\gamma(t))$, 
	where $f(u,v)=(u,v^2,v^3)$ and $\gamma(t)=(t^4,t)$.} 
	\label{fig:curvatures2}
\end{figure}

\subsection*{Acknowledgements}
The authors thank the referee,
Yuki Hattori and Atsufumi Honda for valuable comments and suggestions.

\appendix
\section{Generalized biases for plane curve}\label{sec:genbias}

Let $\gamma:(\R,0)\to(\R^2,0)$ be a curve-germ of $(m,n)$-type
which is given by the form \eqref{eq:mnnormal} in the
$xy$-plane $(\R^2,0)$.
The terms $a_i$ $(i=2,\ldots,\lfloor n/m\rfloor)$
measures the bias of $\gamma$ near singular point.
We call $a_{i+1}$ the $(m,im)$-{\it bias\/} 
$(i=2,\ldots,\lfloor n/m\rfloor)$
of $\gamma$ at $0$,
and 
it is denoted by $\beta_{m,im}$.
We call $b(0)$ is called the 
$(m,n)$-{\it cuspidal curvature\/} as in \cite{honda-saji},
and it is denoted by $r_{m,n}$.

If $m$ and $n$ are even, then it is a half part of
a curve of $(m/2,n/2)$-type,
we consider the following cases:
(1) both $m,n$ are odd, (2) $m$ is odd and $n$ is even,
and
(3) $m$ is even and $n$ is odd.
Moreover, let $a_k$ denotes the first non-zero 
term of $a_i$ $(i=2,\ldots,\lfloor n/m\rfloor)$.
We consider the case $(1)$ and $(2)$.
Then $\gamma$ passes through the origin tangent to the
$x$-axis. 
In the case $(1)$, 
if $k$ is odd, it also passes across the $x$-axis.
If $k$ is even, it approaches to the origin
from one side of $x$-axis and goes away into the same side
of $x$-axis, and
if there does not exist such $k$ (namely, the bias is zero),
it passes through the $x$-axis.
In the case $(2)$, 
if  the bias is zero,
it approaches to the origin
from one side of $x$-axis and goes away into the same side
of $x$-axis.
Figure \ref{fig:bias1} shows the images of the curves
$\gamma_1:t\mapsto (t^3, a_1 t^6 + a_2 t^9 + t^{11})$
with $(a_1,a_2)=(1,0),(0,1),(0,0)$ from left to right.
Figure \ref{fig:bias2} shows the images of the curves
$\gamma_2:t\mapsto (t^3, a_1 t^6 + a_2 t^9 + t^{14})$
with $(a_1,a_2)=(1,0),(0,1),(0,0)$ from left to right.

\begin{figure}[ht]
\centering
\includegraphics[width=.3\linewidth] {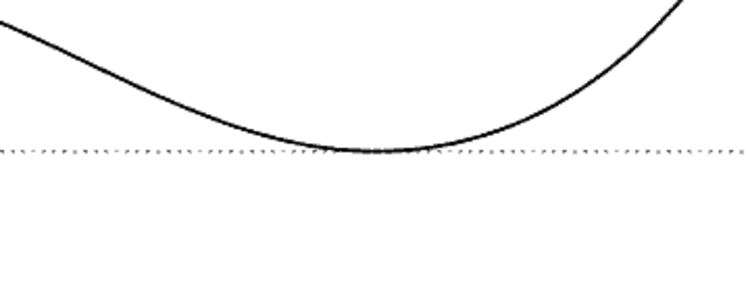}
\hspace{3mm}
\includegraphics[width=.3\linewidth]
{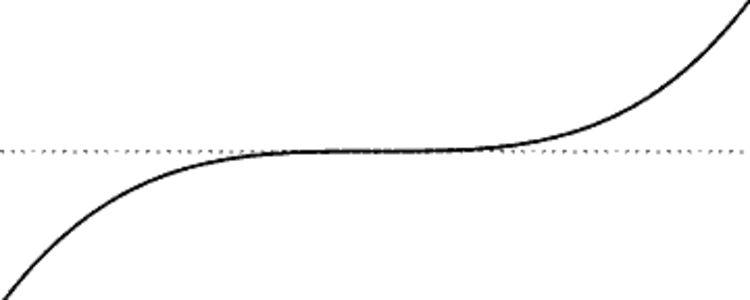}
\hspace{3mm}
\includegraphics[width=.3\linewidth]
{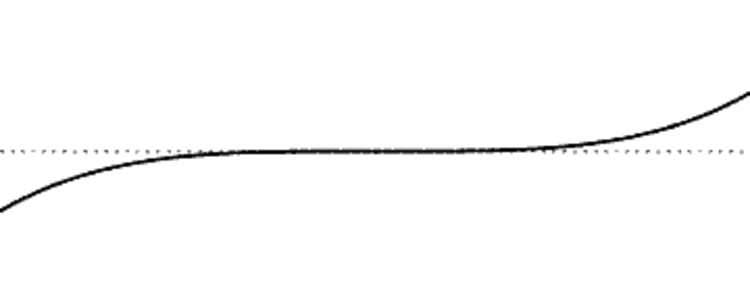}
\caption{The images of the curves $\gamma_1$} 
\label{fig:bias1}
\end{figure}

\begin{figure}[ht]
\centering
\includegraphics[width=.3\linewidth]
{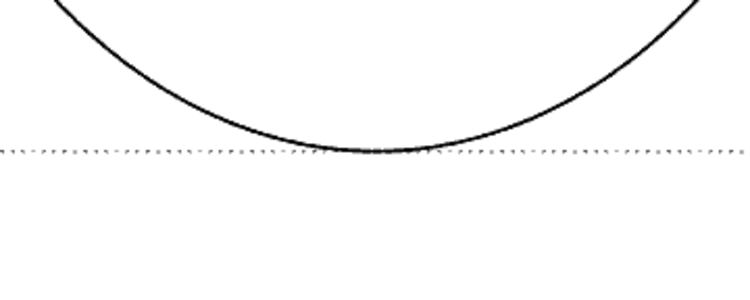}
\hspace{3mm}
\includegraphics[width=.3\linewidth]
{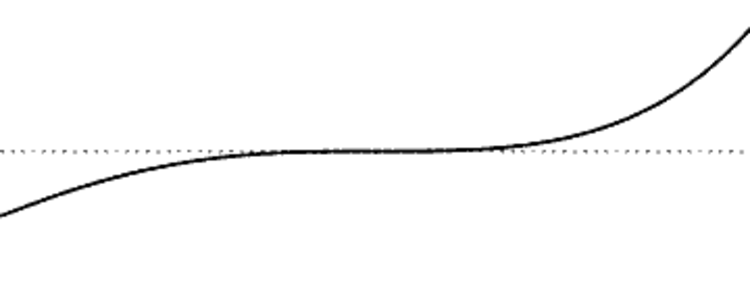}
\hspace{3mm}
\includegraphics[width=.3\linewidth]
{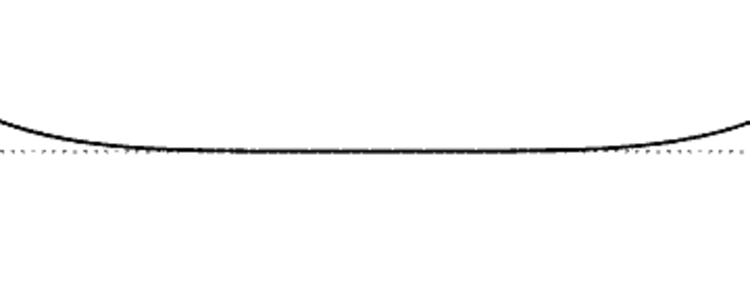}
\caption{The images of the curves $\gamma_2$} 
\label{fig:bias2}
\end{figure}

We consider the case $(3)$.
Then $\gamma$ approaches the origin from a direction of
the $x$-axis and making a cusp, and back to the same direction.
If $k$ is both odd and even, 
it approaches to the origin
from one side of $x$-axis and goes away into the same side
of $x$-axis.
If  the bias is zero,
it passes through the $x$-axis.
Figure \ref{fig:bias3} shows the images of the curves
$\gamma_3:t\mapsto (t^4, a_1 t^8 + a_2 t^{12} + t^{13})$
with $(a_1,a_2)=(1,0),(0,1),(0,0)$ from left to right.
\begin{figure}[ht]
\centering
\includegraphics[width=.3\linewidth]
{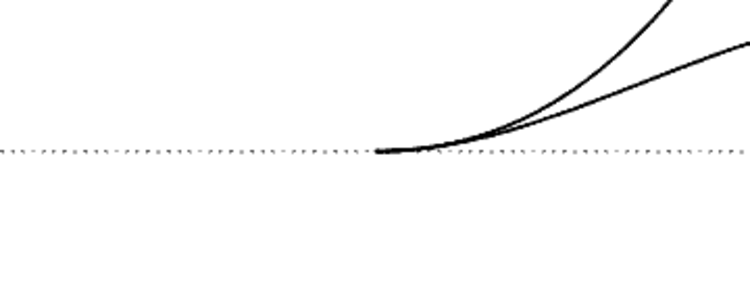}
\hspace{3mm}
\includegraphics[width=.3\linewidth]
{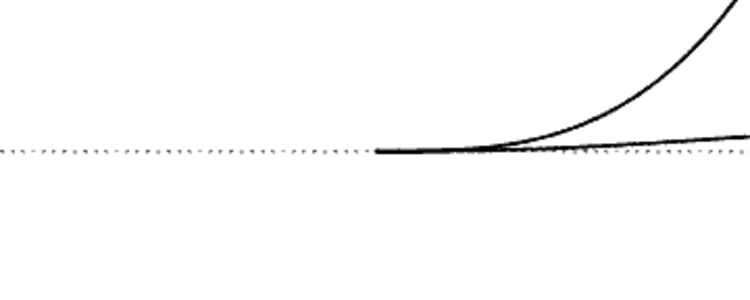}
\hspace{3mm}
\includegraphics[width=.3\linewidth]
{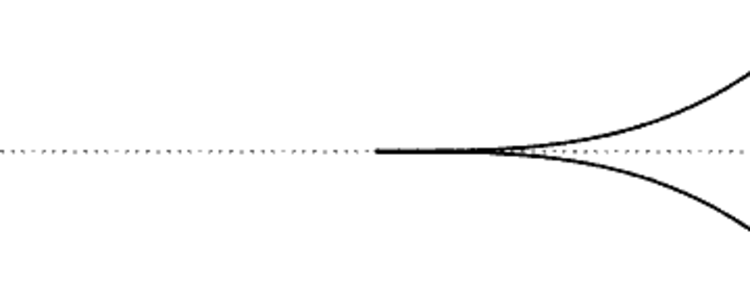}
\caption{The images of the curves $\gamma_3$} 
\label{fig:bias3}
\end{figure}

\begin{example}\label{ex:gamma3}
Let $\gamma$ be a curve-germ $\A^{3}$-equivalent to $(t^3,0)$.
We set
$$
\tilde a_i=
\dfrac{\gamma^{(3)}(0)\cdot\gamma^{(i)}(0)}{i!|\gamma^{(3)}(0)|},\quad
\tilde b_i=
\dfrac{\det(\gamma^{(3)}(0),\gamma^{(i)}(0))}{i!|\gamma^{(3)}(0)|}.
$$
One can calculate the invariants up to $10$ degrees as follows.
The $(3,4)$-cuspidal curvature $r_{3,4}$ is
\begin{equation}\label{eq:delta34}
r_{3,4}=\dfrac{\tilde{b}_4}{\tilde{a}_3^{4/3}}.
\end{equation}
If $r_{3,4}\ne0$, i.e., $\tilde b_4\ne0$, then 
$\gamma$ is $\A$-equivalent to $(t^3,t^4)$.
We assume $\tilde b_4=0$.
Then the
$(3,5)$-cuspidal curvature $r_{3,5}$ is
\begin{equation}\label{eq:delta35}
r_{3,5}=
\dfrac{\tilde{b}_5}{\tilde{a}_3^{5/3}}.
\end{equation}
If $r_{3,5}\ne0$, i.e., $\tilde b_5\ne0$, then 
$\gamma$ is $\A$-equivalent to $(t^3,t^5)$.
We assume $\tilde b_5=0$.
Then the $(3,6)$-bias $\beta_{3,6}$ and the
$(3,7)$-cuspidal curvature $r_{3,7}$ are
\begin{align}
\label{eq:beta36}
\beta_{3,6}&=
\dfrac{\tilde{b}_6}{\tilde{a}_3^2},\\
\label{eq:delta37}
r_{3,7}
&=
\dfrac{-7 \tilde{a}_4 \tilde{b}_6+2 \tilde{a}_3 \tilde{b}_7}{2 \tilde{a}_3^{10/3}}.
\end{align}
If $r_{3,7}\ne0$, i.e., $-7 \tilde{a}_4 \tilde{b}_6+2 \tilde{a}_3 \tilde{b}_7\ne0$, then 
$\gamma$ is $\A^7$-equivalent to $(t^3,t^7)$.
We assume $r_{3,7}=0$, i.e.,
$\tilde{b}_7=7 \tilde{a}_4 \tilde{b}_6/(2 \tilde{a}_3)$.
Then the
$(3,8)$-cuspidal curvature $r_{3,8}$ is
\begin{equation}\label{eq:delta38}
r_{3,8}
=
\dfrac{-35 \tilde{a}_4^2 \tilde{b}_6+2 \tilde{a}_3 (-28 \tilde{a}_5 \tilde{b}_6+5 \tilde{a}_3 \tilde{b}_8)}
{10 \tilde{a}_3^{14/3}}.
\end{equation}
If $r_{3,8}\ne0$, then
$\gamma$ is $\A^8$-equivalent to $(t^3,t^8)$.
We assume $r_{3,8}=0$, i.e.,
$\tilde{b}_8=7 (5 \tilde{a}_4^2+8 \tilde{a}_3 \tilde{a}_5) \tilde{b}_6/(10 \tilde{a}_3^2)$.
Then the $(3,9)$-bias $\beta_{3,9}$ and
the $(3,10)$-cuspidal curvature $r_{3,10}$
are
\begin{align}
\beta_{3,9}
&=-
\dfrac{
63 \tilde{a}_4 \tilde{a}_5 \tilde{b}_6+42 \tilde{a}_3 \tilde{a}_6 \tilde{b}_6-5 \tilde{a}_3^2 \tilde{b}_9}{5 \tilde{a}_3^5},
\label{eq:beta39}\\
r_{3,10}
&=\dfrac{
(10 \tilde{a}_3^3 \tilde{b}_{10}+945 \tilde{a}_4^2 \tilde{a}_5 \tilde{b}_6-42 \tilde{a}_3 (3 \tilde{a}_5^2-10 \tilde{a}_4 \tilde{a}_6) \tilde{b}_6-15 \tilde{a}_3^2 (8 \tilde{a}_7 \tilde{b}_6+5 \tilde{a}_4 \tilde{b}_9))}
{10 \tilde{a}_3^{19/3}}.
\label{eq:delta310}
\end{align}
\end{example}
\begin{proof}[Proof of Example \ref{ex:gamma3}]
By rotating $\gamma$ in $\R^3$, we can write
\begin{equation}\label{eq:gammat}
\gamma(t)=
\left(
\sum_{i=3}^{10}
\dfrac{\tilde a_i}{i!}t^i,\ 
\sum_{i=4}^{10}
\dfrac{\tilde b_i}{i!}t^i\right)+O(10).
\end{equation}
We set 
\[
\phi(t)=t\left(6\sum_{i=3}^{10}\dfrac{\tilde a_i}{i!}t^{i-3}\right)^{1/3},
\]
and the inverse function of $s=\phi(t)$ as $t=\psi(s)$.
We set $\psi(s)=\sum_{i=1}^{10}\psi_is^i/i!+O(10)$.
Then we have
\small
\[
\begin{aligned}
\psi_1&=1/\tilde{a}_3^{1/3},\\
\psi_2&=-\tilde{a}_4/(6 \tilde{a}_3^{5/3}),\\
\psi_3&=(5 \tilde{a}_4^2-4 \tilde{a}_3 \tilde{a}_5)/(40 \tilde{a}_3^3),\\
\psi_4&=(-175 \tilde{a}_4^3+252 \tilde{a}_3 \tilde{a}_4 \tilde{a}_5-72 \tilde{a}_3^2 \tilde{a}_6)/(1080 \tilde{a}_3^{13/3}),\\
\psi_5&=(13475 \tilde{a}_4^4-27720 \tilde{a}_3 \tilde{a}_4^2 \tilde{a}_5+10080 \tilde{a}_3^2 \tilde{a}_4 \tilde{a}_6+432 \tilde{a}_3^2 (14 \tilde{a}_5^2-5 \tilde{a}_3 \tilde{a}_7))/(45360 \tilde{a}_3^{17/3}),\\
\psi_6&=(-1575 \tilde{a}_4^5+4200 \tilde{a}_3 \tilde{a}_4^3 \tilde{a}_5-1680 \tilde{a}_3^2 \tilde{a}_4^2 \tilde{a}_6+96 \tilde{a}_3^2 \tilde{a}_4 (-21 \tilde{a}_5^2+5 \tilde{a}_3 \tilde{a}_7)\\
&\hspace{10mm}
+16 \tilde{a}_3^3 (42 \tilde{a}_5 \tilde{a}_6-5 \tilde{a}_3 \tilde{a}_8))/(2240 \tilde{a}_3^7),\\
\psi_7&=
(475475 \tilde{a}_4^6-1556100 \tilde{a}_3 \tilde{a}_4^4 \tilde{a}_5+655200 \tilde{a}_3^2 \tilde{a}_4^3 \tilde{a}_6-42120 \tilde{a}_3^2 \tilde{a}_4^2 (-28 \tilde{a}_5^2+5 \tilde{a}_3 \tilde{a}_7)\\
&\hspace{10mm}
+3240 \tilde{a}_3^3 \tilde{a}_4 (-182 \tilde{a}_5 \tilde{a}_6+15 \tilde{a}_3 \tilde{a}_8)\\
&\hspace{10mm}
-1296 \tilde{a}_3^3 (91 \tilde{a}_5^3-60 \tilde{a}_3 \tilde{a}_5 \tilde{a}_7+5 \tilde{a}_3 (-7 \tilde{a}_6^2+\tilde{a}_3 \tilde{a}_9)))/(233280 \tilde{a}_3^{25/3}),\\
\psi_8&=
(-155520 \tilde{a}_{10} \tilde{a}_3^6+11 (-4447625 \tilde{a}_4^7+17243100 \tilde{a}_3 \tilde{a}_4^5 \tilde{a}_5-7497000 \tilde{a}_3^2 \tilde{a}_4^4 \tilde{a}_6\\
&\hspace{10mm}
+2570400 \tilde{a}_3^2 \tilde{a}_4^3 (-7 \tilde{a}_5^2+\tilde{a}_3 \tilde{a}_7)-45360 \tilde{a}_3^3 \tilde{a}_4^2 (-238 \tilde{a}_5 \tilde{a}_6+15 \tilde{a}_3 \tilde{a}_8)\\
&\hspace{10mm}+15552 \tilde{a}_3^4 (-98 \tilde{a}_5^2 \tilde{a}_6+20 \tilde{a}_3 \tilde{a}_6 \tilde{a}_7+15 \tilde{a}_3 \tilde{a}_5 \tilde{a}_8)\\
&\hspace{10mm}+5184 \tilde{a}_3^3 \tilde{a}_4 (833 \tilde{a}_5^3-420 \tilde{a}_3 \tilde{a}_5 \tilde{a}_7+5 \tilde{a}_3 (-49 \tilde{a}_6^2+5 \tilde{a}_3 \tilde{a}_9))))
/(6998400 \tilde{a}_3^{29/3}),\\
\psi_9&=
(17920\tilde{a}_{10} \tilde{a}_4\tilde{a}_3^6
+
2480625 \tilde{a}_4^8-11113200 \tilde{a}_3 \tilde{a}_4^6 \tilde{a}_5+4939200 \tilde{a}_3^2 \tilde{a}_4^4 (3 \tilde{a}_5^2+\tilde{a}_4 \tilde{a}_6)\\
&\hspace{10mm}-70560 \tilde{a}_3^3 \tilde{a}_4^2 (84 \tilde{a}_5^3+140 \tilde{a}_4 \tilde{a}_5 \tilde{a}_6+25 \tilde{a}_4^2 \tilde{a}_7)+4032 \tilde{a}_3^4 (84 \tilde{a}_5^4+840 \tilde{a}_4 \tilde{a}_5^2 \tilde{a}_6\\
&\hspace{10mm}+600 \tilde{a}_4^2 \tilde{a}_5 \tilde{a}_7+25 \tilde{a}_4^2 (14 \tilde{a}_6^2+5 \tilde{a}_4 \tilde{a}_8))+2560 \tilde{a}_3^6 (12 \tilde{a}_7^2+21 \tilde{a}_6 \tilde{a}_8+14 \tilde{a}_5 \tilde{a}_9)\\
&\hspace{10mm}-4480 \tilde{a}_3^5 (72 \tilde{a}_5^2 \tilde{a}_7+\tilde{a}_5 (84 \tilde{a}_6^2+90 \tilde{a}_4 \tilde{a}_8)+5 \tilde{a}_4 (24 \tilde{a}_6 \tilde{a}_7+5 \tilde{a}_4 \tilde{a}_9)))/(89600 \tilde{a}_3^{11}),\\
\psi_{10}&=
13 (-16865646875 \tilde{a}_4^9+85717170000 \tilde{a}_3 \tilde{a}_4^7 \tilde{a}_5+19595520 \tilde{a}_{10} \tilde{a}_3^6 (-10 \tilde{a}_4^2+3 \tilde{a}_3 \tilde{a}_5)\\
&\hspace{10mm}-38710980000 \tilde{a}_3^2 \tilde{a}_4^6 \tilde{a}_6+2844072000 \tilde{a}_3^2 \tilde{a}_4^5 (-49 \tilde{a}_5^2+5 \tilde{a}_3 \tilde{a}_7)\\
&\hspace{10mm}-1422036000 \tilde{a}_3^3 \tilde{a}_4^4 (-70 \tilde{a}_5 \tilde{a}_6+3 \tilde{a}_3 \tilde{a}_8)\\
&\hspace{10mm}+372314880 \tilde{a}_3^4 \tilde{a}_4^2 (-154 \tilde{a}_5^2 \tilde{a}_6+20 \tilde{a}_3 \tilde{a}_6 \tilde{a}_7+15 \tilde{a}_3 \tilde{a}_5 \tilde{a}_8)\\
&\hspace{10mm}+206841600 \tilde{a}_3^3 \tilde{a}_4^3 (385 \tilde{a}_5^3-132 \tilde{a}_3 \tilde{a}_5 \tilde{a}_7+\tilde{a}_3 (-77 \tilde{a}_6^2+5 \tilde{a}_3 \tilde{a}_9))\\
&\hspace{10mm}-1119744 \tilde{a}_3^4 \tilde{a}_4 (10241 \tilde{a}_5^4-7980 \tilde{a}_3 \tilde{a}_5^2 \tilde{a}_7+150 \tilde{a}_3^2 (4 \tilde{a}_7^2+7 \tilde{a}_6 \tilde{a}_8)\\
&\hspace{10mm}+70 \tilde{a}_3 \tilde{a}_5 (-133 \tilde{a}_6^2+10 \tilde{a}_3 \tilde{a}_9))+186624 \tilde{a}_3^5 (22344 \tilde{a}_5^3 \tilde{a}_6-10080 \tilde{a}_3 \tilde{a}_5 \tilde{a}_6 \tilde{a}_7\\
&\hspace{10mm}-3780 \tilde{a}_3 \tilde{a}_5^2 \tilde{a}_8+5 \tilde{a}_3 (-392 \tilde{a}_6^3+135 \tilde{a}_3 \tilde{a}_7 \tilde{a}_8+105 \tilde{a}_3 \tilde{a}_6 \tilde{a}_9)))/(1763596800 \tilde{a}_3^{37/3}).
\end{aligned}
\]
\normalsize
Substituting $t=\psi(s)$ into $\gamma(t)$, and by a straightforward 
calculation, we see 
$\gamma(\psi(s))=(s^3/6,r_{3,4}s^4/4!)+O(4)$, and we have \eqref{eq:delta34}.
Under the condition $r_{3,4}=0$,
we have
$\gamma(\psi(s))=(s^3/6,r_{3,5}s^5/5!)+O(5)$, and we have \eqref{eq:delta35}.
We assume $r_{3,4}=r_{3,5}=0$,
Then we see 
$\gamma(\psi(s))=(s^3/6,\beta_{3,6}s^6/6!+\beta_{3,7}s^7/7!)+O(7)$, 
and we have \eqref{eq:beta36} and \eqref{eq:delta37}.
We assume $r_{3,7}=0$.
Then we see 
$\gamma(\psi(s))=(s^3/6,\beta_{3,6}s^6/6!+\beta_{3,8}s^8/8!)+O(8)$, 
and we have \eqref{eq:delta38}.
We assume $r_{3,8}=0$.
Then we see 
$\gamma(\psi(s))=(s^3/6,\beta_{3,6}s^6/6!+\beta_{3,9}s^9/9!
+r_{3,10}s^{10}/10!)+O(10)$,
and we have \eqref{eq:beta39} and \eqref{eq:delta310}.
\end{proof}

\begin{example}\label{ex:curvefront}
Let $\gamma$ be a curve-germ $\A^{m+1}$-equivalent to $(t^m,t^{m+1})$.
We set
\begin{equation}\label{eq:gammam}
\gamma(t)=
\left(
\sum_{i=m}^{m+1}
\dfrac{a_i}{i!}t^i,\ 
\sum_{i=m}^{m+1}
\dfrac{b_i}{i!}t^i\right)+O(m+1)\quad((a_m,b_m)\ne(0,0)).
\end{equation}
Then by a standard rotation $A$ in $\R^2$ and a parameter change
$$
t\mapsto \bar a^{-1/m}\left(
t-\dfrac{\bar a_{m+1}}{m(m+1)a^{(m+1)/m}}t^2\right),
$$
we see
$$
A\gamma(t)
=\left(\dfrac{t^m}{m!},
\dfrac{r_{m,m+1}}{(m+1)!}t^{m+1}\right),\quad
\left(
r_{m,m+1}
=
\dfrac{\bar b_{m+1}}{\bar a_m^{(m+1)/m}}\right).
$$
Thus the $(m,m+1)$-cuspidal curvature is
$r_{m,m+1}$.
Here, $\bar a_i$ and $\bar b_i$ are 
$$
\bar a_i=
\dfrac{\gamma^{(m)}(0)\cdot\gamma^{(i)}(0)}{i!|\gamma^{(m)}(0)|},\quad
\bar b_i=
\dfrac{\det(\gamma^{(m)}(0),\gamma^{(i)}(0))}{i!|\gamma^{(m)}(0)|}.
$$
\end{example}


\end{document}